\newcommand{\rank}{\mathrm{rank}\:}
\newcommand{\Gr}{\mathrm{Gr}}
\newcommand{\PP}{\mathbb{P}}
\newcommand{\RR}{\mathbb{R}}
\newcommand{\CC}{\mathbb{C}}
\newcommand{\Ca}{\textit{\textbf C}}
\DeclareMathOperator{\PGL}{PGL}
\theoremstyle{plain}
\newtheorem{theorem}{Theorem}[section] 
\newtheorem{proposition}[theorem]{Proposition}
\newtheorem{lemma}[theorem]{Lemma}
\newtheorem{conjecture}[theorem]{Conjecture}
\newtheorem{example}[theorem]{Example}
\newtheorem{remark}[theorem]{Remark}
\theoremstyle{definition}
\newtheorem{definition}[theorem]{Definition}
\begin{document}
\title{
Metric Multiview Geometry---a \\Catalogue in Low Dimensions
}
\author[1]{Timothy Duff}
\affil[1]{{\small University of Washington, Seattle, USA}}
\author[2]{Felix Rydell}
\affil[2]{{\small KTH Royal Institute of Technology, Stockholm, Sweden}}
\date{}
\maketitle

\begin{abstract} We systematically compile an exhaustive catalogue of multiview varieties and anchored multiview varieties arising from projections of points and lines in 1, 2, and 3-dimensional projective space. We say that two such varieties are ED-equivalent if there is a linear isomorphism between that that preserve ED-critical points. This gives rise to fourteen equivalence classes, and we determine various properties—dimension, set-theoretic equations, and multidegrees—for all varieties featured in our catalogue. In the case of points, we also present a complementary study of resectioning varieties and their singular loci. Finally, we propose conjectures for the Euclidean distance degrees of all varieties appearing in our comprehensive compilation.

 
\end{abstract}
{
  \hypersetup{linkcolor=teal}
}




\section*{Introduction}

The structure-from-motion pipeline in computer vision aims to create 3D computer models from 2D images captured by cameras with unknown parameters \cite{szeliski2022computer}. Given a set of $n$ 2D images, a typical implementation identifies \textit{correspondences}, across the images that are recognizable as originating from the same 3D feature. These correspondences are used for camera \textit{calibration}, estimating the camera parameters, and subsequently \textit{triangulation}, which finds the 3D features that minimize the reprojection error. 

Multiview varieties are an essential concept in structure-from-motion. They provide mathematical models for the set of all possible image feature correspondences from a given set of known cameras. Formally, they are defined as Zariski closures of images of rational maps that describe how light rays are captured into images. The first multiview variety $\mathcal M_{\Ca}$ was formally defined in \cite{heyden1997algebraic} as the Zariski closure of the projection map
\begin{align}\label{eq: Phi C}\begin{aligned}
    \Phi_{\Ca}: \PP^3&\dashrightarrow (\PP^2)^n,\\
    X&\mapsto (C_1X,\ldots,C_nX),
    \end{aligned}
\end{align}
given a camera arrangement $\Ca=(C_1,\ldots,C_n)$ of full rank $3\times 4$ matrices $C_i$. The varieties $\mathcal{M}_{\mathbf{C}}$ are well-studied from both geometric and algebraic points of view---see eg.~\cite{agarwal2019ideals,trager2015joint,aholt2013hilbert,EDDegree_point,li2018images}. 

In applications, it is important to know a set-theoretic description of $\mathcal M_{\Ca}\;$: Assume that in the process of calibration, we are given the data of a point correspondence $x= (x_1,\ldots, x_n)$ in $(\PP^2)^n$. For the cameras $\Ca$ to be compatible with this data, we need $x\in \mathcal M_{\Ca}$. This puts constraints on $\Ca$, as the equations that cut out the multiview variety must vanish at $x.$ 

When solving the triangulation problem, we are instead given a known camera arrangement $\Ca$ and a tuple of (noisy) data of an image tuple $\widetilde{x}=(\widetilde{x}_1,\ldots,\widetilde{x}_n)$. Our aim is to find the unique point $X\in \PP^3$ that best describes the image tuple. In practice, this is done minimizing the so-called reprojection error in a choice of affine patch; see \cite{beardsley1994navigation,beardsley1997sequential,stewenius2005hard}. It works as follows. Firstly, we find an approximation $x$ of $\widetilde{x}$ that lies on the multiview variety $\mathcal M_{\Ca}$, and secondly we intersect all back-projected lines of $x_i$ for $i=1,\ldots,n$, to obtain $X$. From the algebraic point of view, the first part has been studied through Euclidean distance degrees (EDDs) in specific cases \cite{harris2018chern,EDDegree_point,rydell2023theoretical}, where the EDD is an algebraic complexity measure for the corresponding optimization problem \cite{draisma2016euclidean}. 

Various generalizations of \Cref{eq: Phi C} have been explored for different applications in the literature. Shashua and Wolf \cite{wolf2002projection}, and Hartley and Vidal \cite{hartley2008perspective} examine projections $\mathbb{P}^N \dashrightarrow \mathbb{P}^2$, focusing on the analysis of dynamic scenes. Recently, Rydell et al. \cite{rydell2023theoretical}, in their investigation of triangulation that preserves incidence relations, consider projections $\mathbb{P}^2 \dashrightarrow \mathbb{P}^1$ and $\mathbb{P}^1 \dashrightarrow \mathbb{P}^1$, which also appear in other work \cite{quan1997affine,faugeras1998self}. In addition, $\PP^3\dashrightarrow\PP^1$ is commonly used to model radial cameras \cite{thirthala2005multi,thirthala2005radial,hruby2023four}. Projections of lines have also been studied \cite{breiding2022line,breiding2023line,rydell2023theoretical}, and the study of projections of higher dimensional subspaces was initialized in \cite{rydell2023triangulation}. In this direction, for a projective subspace $V$, let $\Gr(k,V)$ denote the \textit{Grassmannian of $k$-planes}, i.e. the set of $k$-dimensional subspaces of $V$. For a full rank matrix $C:\PP^N\dashrightarrow\PP^h$ and $P\in \Gr(k,\PP^N)$ spanned by $X_0,\ldots, X_k$, we define $C\cdot P\in \Gr(k,\PP^h)$ to be the span of $CX_0,\ldots, CX_k$. We define the \textit{(generalized) multiview variety} $\mathcal M_{\Ca,k}$ as the Zariski closure of the rational map
\begin{align}\begin{aligned}
    \Phi_{\Ca,k}: \Gr(k,\PP^N)&\dashrightarrow \Gr(k,\PP^h)^n,\\
    P&\mapsto (C_1\cdot P,\ldots, C_n\cdot P),\end{aligned}
\end{align}
given a camera arrangement $\Ca=(C_1,\ldots,C_n)$ of full rank $(h+1)\times (N+1)$ matrices $C_i$.

For a Schubert varieties $\Lambda\subseteq \Gr(k,\PP^N)$, we further define \textit{anchored} multiview varieties $\mathcal M_{\Ca,k}^\Lambda$ as the Zariski closures of the images of 
\begin{align}\begin{aligned}
    \Phi_{\Ca,k}\restriction_\Lambda:\Lambda &\dashrightarrow \Gr(k,\PP^{h})^n,\\
    P &\mapsto (C_1\cdot P,\ldots,C_n\cdot P).
\end{aligned}
\end{align}
We say that $\mathcal M_{\Ca,k}^\Lambda$ is \textit{anchored} at $\Lambda$. In order to motivate this definition, we note that a key observation of \cite{rydell2023theoretical} was that in the triangulation process, one can use anchored multiview varieties in order to preserve incidence relations among point and line correspondences in the triangulation process and make it faster. As an example, consider a line correspondence $\ell=(\ell_1,\ldots,\ell_n)$ across $n$ views and $p$ point correspondences $x^{(j)}=(x_1^{(j)},\ldots,x_n^{(j)})$. Assume that our incidence relation is that each $x_i^{(j)}\in \ell_i$ for each $j=1,\ldots,p$. Given a camera arrangement $\Ca$, we firstly triangulate one of the point correspondences by fitting it to $\mathcal{M}_{\mathcal{C}}$ to get a 3D point $X$. We secondly reconstruct the line correspondence by fitting the line correspondence $\ell$ to the line multiview variety anchored at $\Lambda=\{L\in \Gr(1,\PP^3): X\in L\}$ to get a 3D line $L$. Thirdly, we triangulate the remaining point correspondences by fitting them to the point multiview variety anchored at the line $L$. This application motivates us to consider all possible anchored multiview varieties for $N=1,2$ and $N=3$. 

The structure and contributions of this paper are as follows. In \Cref{s: Pre}, we fix notation and terminology by recalling several standard facts. In \Cref{s: VV} we list all (anchored) multiview varieties arising from projections from 1,2 and 3-dimensional projective spaces. \Cref{s: ED-equiv} builds on a key insight of \cite{rydell2023theoretical} that some (anchored) multiview varieties are linearly isomorphic and that there is a natural bijection of critical points in the corresponding minimization of reprojection errors. When this happens, we say that two varieties are \textit{ED-equivalent}, and in \Cref{thm: ED-cat}, we classify all distinct equivalence classes under this relation. Set-theoretic equations are described for a representative of each equivalence class in~\Cref{s: Set-Theoretic}, and we compute their multidegrees in \Cref{s: Multdeg}. In \Cref{s:resectioning} we initiate a parallel study of the resectioning varieties previously studied in~\cite{agarwal2022atlas,resectUW} and their singular loci (\Cref{thm:sing-res11}.)
Finally, in \Cref{s: conj}, we state conjectural Euclidean distance degrees for all (anchored) multiview varieties and resectioning varieties studied in this paper, based on computations in \texttt{julia} \cite{bezanson2012julia,breiding2018homotopycontinuation} and \texttt{Macaulay2} \cite{M2}.

\bigskip

\paragraph{Acknoledgements.} Timothy Duff was supported by an NSF Mathematical Sciences Postdoctoral Research Fellowship (DMS-2103310). Felix Rydell was supported by the Knut and Alice Wallenberg Foundation within their WASP (Wallenberg AI, Autonomous
Systems and Software Program) AI/Math initiative.


\section{Preliminaries}\label{s: Pre} We collect the tools we use for the convenience of the reader. The reader may choose to skip this section and come back to it as it is used in the other sections. In \Cref{ss: wedge}, we define wedge product between vector and matrices and relate them to the Plücker embedding. In \Cref{ss: smooth quad}, we establish classic results on smooth quadrics. In \Cref{ss: Cayley}, we consider the algebraic matrix group $\mathrm{SO}_n$ and its parametrization that we use for the proof of our main theorem in \Cref{s: ED-equiv}. In \Cref{ss: Euler}, we discuss the topological Euler characteristic, that we use in \Cref{s: ED-equiv} to prove that certain multiview varieties are non-isomorphic.

Throughout this paper, we always work over the complex numbers, and we use the following notation. Fix $N\in \{0,1,2,\ldots\}$. A \textit{$k$-plane} is a $k$-dimensional subspace of $\PP^N$, the $N$-dimensional complex projective space.  We write $\Gr(k,\PP^N)$ for the Grassmannian of $k$-planes in $\PP^N$. $0$-planes are therefore points, $1$-planes are lines and $2$-planes are planes. 
Lines are said to be \textit{concurrent} if they meet in a common point.
In this paper, an \textit{isomorphism} of varieties refers to a regular (well-defined map) with a well-defined inverse map. 
A linear isomorphism $C: \PP^N \to \PP^N$ is called a \textit{homography}.


\subsection{Wedge and cross products}\label{ss: wedge}
The set of all $k$-planes $P$ of $\PP^N$ is given the structure of an algebraic variety called the Grassmannian $\Gr(k,\PP^N)\subseteq \PP^{{N+1\choose k+1}-1}$, through the \textit{Plücker embedding}. If $P$ is spanned by $X_0,\ldots,X_k$, then the image of $P$ under the Plücker embedding is the vector $\iota(X_0,\ldots,X_k)$ of all ${N+1\choose k+1}$ many $(k+1)\times (k+1)$ minors of 
\begin{align}
    \begin{bmatrix}
        X_0 & \cdots & X_k
    \end{bmatrix}.
\end{align}
This gives a rational map $\iota : \left(\PP^N\right)^{k+1} \dashrightarrow \PP^{{N+1\choose k+1}-1}$, which is projectively well-defined precisely when the above matrix is full rank, the image of which is precisely $\Gr(k,\PP^N)$ \cite[Section 8]{Gathmann}. The Grassmannian $\Gr(k,\PP^N)$ is isomorphic to $\Gr(N-k-1,\PP^N)$. In particular, $\Gr(1,\PP^2)\cong \PP^2$ and $\Gr(2,\PP^3)\cong \PP^3$.

Let $C:\PP^N\dashrightarrow \PP^h$ be a full rank linear map with $h\ge k$. We define $C\cdot P$ to be the $k$-plane spanned by $CX_0,\ldots,CX_k$. There is a ${h+1\choose k+1}\times {N+1\choose k+1}$ matrix, which we call $\wedge^{k+1} C$, with the property that
\begin{align}
\iota(CX_0,\ldots,CX_k)=\wedge^{k+1} C\;\iota(X_0,\ldots,X_k). 
\end{align}
By construction, $\wedge^{k+1} I=I$ and for two matrices $C$ and $D$ such that $CD$ is well-defined, $\wedge^{k+1}(CD)=\wedge^{k+1}C \; \wedge^{k+1}D$. Next, we sketch an argument for why $\wedge^{k+1}C$ is full rank if $C$ is. Assuming that $h\le N$, the image of the mapping $C\cdot P$ equals $\Gr(k,\PP^h)$, and we can choose $X_i$ such that $\iota(CX_0,\ldots,CX_k)$ is any given unit vector. Then the span of $\Gr(k,\PP^h)$ is all of $\PP^{{h+1\choose k+1}-1}$, and $\wedge^{k+1}C$ has to be full rank. If $h\ge n$, we can let $C^\dagger$ be a pseudo-inverse satisfying $C^\dagger C=I$, and note that by the above that $\wedge^{k+1}C^\dagger\wedge^{k+1}C=I$. Since $\wedge^{k+1}C^\dagger$ is full rank, so must $\wedge^{k+1}C$ be.

For lines in $\PP^3$, the Plücker embedding may be identified as follows: Let $X,Y\in\mathbb P^3$ and denote $X\wedge Y:=XY^T - YX^T$. The $4\times 4$ matrix $X\wedge Y$ is skew-symmetric and its upper triangular entries are the six $2\times 2$ minors of the $4\times 2$ matrix $\begin{bmatrix}
    X & Y
\end{bmatrix}$, i.e. up to a natural isomorphism $X\wedge Y$ is the Plücker embedding. For lines in $\PP^2$, we apply the cross product $\times$: If $X,Y\in\mathbb P^2$, then $X\times Y$ defines the unique linear equation that vanishes on the line spanned by $X$ and $Y$. In other words, $X\times Y$ is an element of the dual space $(\PP^2)^\vee$. The cross product and the Plücker embedding in this case differ by multiplication with $\left[\begin{smallmatrix} 0 & 0 & 1\\ 0 & -1 & 0\\ 1 & 0 & 0
        \end{smallmatrix}\right]$. We may therefore $\iota(X,Y)$ to mean $X\times Y$. 


\subsection{Smooth quadrics in $\PP^3$}\label{ss: smooth quad} As demonstrated by \cite{breiding2022line,breiding2023line}, smooth quadrics are essential for the study of line multiview varieties. In \Cref{s: VV}, they also appear in the definition for some anchored multiview varieties. A smooth quadric $Q$ in $\PP^3$ is the set of $X\in \PP^3$ satisfying 
\begin{align}
    X^TMX=0,
\end{align}
for a full rank $4\times 4$ matrix $M$, that we can assume to be symmetric. It is easy to check that this variety is smooth; the gradient $2MX$ is non-zero for every $X\in \PP^3$. 

\begin{example} A canonical choice of smooth quadric $Q$ is given by the image of the Segre embedding
\begin{align}\begin{aligned}
    \sigma:\PP^1\times \PP^1&\to  \PP^3,\\
(a,b)&\mapsto(a_0b_0:a_0b_1:a_1b_0:a_1b_1).\end{aligned}
\end{align}
This map is an isomorphism onto its image, which $\PP^3$ is given by the equation 
\begin{align}
   X_0X_3-X_1X_2 =0.
\end{align}
The quadric $Q$ contains two 1-dimensional families of lines. These are parametrized by
\begin{align}
    \ell_1(a):=\iota\Big(\begin{bmatrix}
       a_0&
       0 &       a_1 & 0 \\        0 &
       a_0&
        0&  a_1
    \end{bmatrix}\Big)\quad\textnormal{ and } \quad  \ell_2(b):=\iota\Big(\begin{bmatrix}
         b_0 &
       b_1 &
       0 & 0 \\
          0 &
        0&
       b_0& b_1
    \end{bmatrix}\Big).
\end{align}
\end{example}

In general, we have the following well-known results:

\begin{theorem} A smooth quadric in $\PP^3$ contains two 1-dimensional families of lines. Any two lines in the same family are disjoint, and any two lines from different families meet in exactly one point.   
\end{theorem}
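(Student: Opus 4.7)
The plan is to reduce to the Segre quadric $Q_0 = \{X_0X_3 - X_1X_2 = 0\}$ of the preceding example. Since we work over $\CC$, any two non-degenerate symmetric bilinear forms in four variables are equivalent, so there is a homography $g \in \PGL_4$ carrying $Q$ to $Q_0$. Homographies send lines to lines bijectively and preserve incidences, so it suffices to establish the three assertions for $Q_0$ and then transport them back along $g$.

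For $Q_0$, the Segre isomorphism $\sigma : \PP^1 \times \PP^1 \to Q_0$ already exhibits two $1$-parameter families of lines, namely $\ell_1(a) = \sigma(\{a\}\times \PP^1)$ and $\ell_2(b) = \sigma(\PP^1 \times \{b\})$. To show these exhaust all lines on $Q_0$, I would take an arbitrary line $L \subset Q_0$, pull it back to the irreducible curve $C = \sigma^{-1}(L) \subset \PP^1 \times \PP^1$, and compute its bidegree $(d_1,d_2)$. Since $\sigma^{*}\mathcal{O}_{\PP^3}(1) = \mathcal{O}(1,1)$, the degree of $L = \sigma(C)$ in $\PP^3$ equals $d_1+d_2$; and $L$ being a line forces $(d_1,d_2) \in \{(1,0),(0,1)\}$. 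Thus $C$ is a fiber of one of the two projections $\PP^1 \times \PP^1 \to \PP^1$, and $L$ lies in one of the two rulings $\{\ell_1(a)\}_{a}$, $\{\ell_2(b)\}_{b}$.

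The intersection statements then reduce to set-theoretic computations in $\PP^1 \times \PP^1$, using that $\sigma$ is a bijection onto $Q_0$. Two distinct fibers of the same projection are disjoint, so $\ell_i(a) \cap \ell_i(a') = \emptyset$ whenever $a \neq a'$; two fibers of different projections $\{a\}\times\PP^1$ and $\PP^1\times\{b\}$ meet in the single point $(a,b)$, so $\ell_1(a) \cap \ell_2(b) = \{\sigma(a,b)\}$. The one step requiring care is the bidegree classification in the second paragraph. If one wishes to sidestep citing $\mathrm{Pic}(\PP^1 \times \PP^1)$, an elementary substitute is to parametrize $L$ as $\{sP + tP' : (s:t) \in \PP^1\}$, substitute into $X_0 X_3 = X_1 X_2$, and verify directly that one of the ratios $(X_0:X_2)=(X_1:X_3)$ or $(X_0:X_1)=(X_2:X_3)$ must be constant along $L$, placing $L$ in exactly one of the two families.
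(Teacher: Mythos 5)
Your argument is correct. Note, however, that the paper itself gives no proof of this statement: it records it as a classical, well-known fact immediately after the example of the Segre quadric, so there is no internal argument to compare yours against. Your route is the standard one and meshes naturally with the paper's setup: the preceding example already supplies the Segre quadric $X_0X_3-X_1X_2=0$ together with explicit parametrizations of the two rulings, and your first step—diagonalizing the nondegenerate symmetric form over $\CC$ to reduce an arbitrary smooth quadric to this model via a homography, which preserves lines and incidences—is exactly how one is meant to use that example. The only substantive content beyond the paper's text is the exhaustion step, i.e.\ that every line on the quadric belongs to one of the two rulings, and your bidegree computation handles it correctly: since $\sigma$ is an isomorphism onto its image with $\sigma^*\mathcal{O}_{\PP^3}(1)\cong\mathcal{O}(1,1)$, a line pulls back to an irreducible curve of bidegree $(1,0)$ or $(0,1)$, hence a fiber of one of the projections; the disjointness and single-intersection claims then reduce to trivial statements about fibers of $\PP^1\times\PP^1$. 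Your proposed elementary substitute (parametrize $L$, substitute into the quadric equation, and check that one of the two ratios is constant) is also sound and would make the write-up self-contained without invoking $\mathrm{Pic}(\PP^1\times\PP^1)$. One cosmetic point: in ``any two lines in the same family are disjoint'' you should say \emph{distinct} lines, as you implicitly do when writing $a\neq a'$.
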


\begin{theorem} To three pairwise disjoint lines in $\PP^3$, there is a unique smooth quadric $Q$ that they are contained in. To four pairwise disjoint lines in $\PP^3$, there is a exactly two lines that meet all of them. 
\end{theorem}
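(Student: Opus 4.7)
The plan is to handle both parts through a common construction based on a key lemma: for three pairwise skew lines $\ell_1, \ell_2, \ell_3$ in $\PP^3$, through each point $p \in \ell_1$ passes exactly one \emph{transversal}, i.e., one line meeting both $\ell_2$ and $\ell_3$. For existence, let $\pi := \mathrm{span}(p, \ell_2)$; since $\ell_2, \ell_3$ are skew, $\ell_3 \not\subset \pi$, so $\pi \cap \ell_3 = \{q\}$ for a unique point $q \ne p$, and $\overline{pq} \subset \pi$ meets $\ell_2$ in a single point. Uniqueness is clear since any such transversal must lie in $\pi$.

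For the first part of the theorem, existence of a quadric $Q$ containing $\ell_1 \cup \ell_2 \cup \ell_3$ follows by choosing three points on each line and finding a nonzero quadric through the resulting nine points (possible since the space of quadrics is $\PP^9$); then Bezout forces $\ell_i \subset Q$. Smoothness is verified by elimination: a pair of planes cannot contain three pairwise skew lines (two would be coplanar, hence meet), and a quadric cone cannot either (any two of its ruling lines meet at the vertex). For uniqueness, suppose $Q_1 \ne Q_2$ are smooth quadrics both containing the three lines. In each $Q_i$, the three skew lines must lie in a single ruling (lines from opposite rulings always meet), so the opposite ruling of $Q_i$ consists of transversals to $\ell_1, \ell_2, \ell_3$; by the key lemma these account for the entire one-parameter family of such transversals. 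Thus both $Q_1$ and $Q_2$ contain this infinite family of lines, contradicting that $Q_1 \cap Q_2$ is a curve of degree $4$.

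For the second part, given four pairwise skew lines, apply the first part to $\ell_1, \ell_2, \ell_3$ to produce the unique smooth quadric $Q$. Any transversal to all four $\ell_i$ is in particular a transversal to the first three, so it lies on $Q$ and belongs to the ruling opposite to the one containing $\ell_1, \ell_2, \ell_3$. The line $\ell_4$, being skew to each of $\ell_1, \ell_2, \ell_3$, cannot itself belong to the opposite ruling (whose lines meet all three), and in the generic case does not lie in the same ruling either; hence $\ell_4 \not\subset Q$, and Bezout gives $|\ell_4 \cap Q| = 2$. Each of these two points lies on a unique line of the opposite ruling, producing exactly two transversals to all four lines, and these exhaust the count since any such transversal must hit $\ell_4$ at a point of $\ell_4 \cap Q$. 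The main obstacle is the uniqueness step of the first part: everything hinges on recognizing that the double-ruling structure forces the family of transversals to be common to any two smooth quadrics through the three lines, which is the pivotal geometric input for both halves of the statement.
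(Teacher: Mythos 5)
Your proof is essentially correct, and there is in fact nothing in the paper to compare it against: the paper states this theorem as a well-known classical fact and gives no proof, immediately moving on to use it. Notably, your key lemma---through each point $p\in\ell_1$ there is exactly one transversal to $\ell_2,\ell_3$, obtained by intersecting the plane $\mathrm{span}(p,\ell_2)$ with $\ell_3$---is precisely the construction the paper itself employs right after the theorem to parametrize the family $\Lambda$ of common transversals (the map $\phi$ in the subsection on smooth quadrics), so your route is consistent with how the paper uses the result. The remainder of your argument is the standard classical one and is sound: a quadric through three points on each line contains all three lines, a singular quadric (plane pair, double plane, or cone) cannot contain three pairwise skew lines, and two distinct smooth quadrics through the lines would both contain the infinite family of transversals, contradicting that their intersection is a degree-$4$ curve. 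One caveat on the second half: as literally stated, ``exactly two'' needs a genericity hypothesis on the fourth line. If $\ell_4$ lies on $Q$ (necessarily in the same ruling as $\ell_1,\ell_2,\ell_3$, since it is skew to all three) there are infinitely many common transversals, and if $\ell_4$ is tangent to $Q$ there is only one, counted with multiplicity two. You flag the first degeneracy with ``in the generic case'' but not the second; the paper's statement glosses over both, so your proof matches the intended generic reading, and it would be worth making the tangency case explicit.
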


We next discuss how to parametrize the families of lines contained in a smooth quadric. Let $L_i:=\iota(u_i,v_i)$ for $i=1,2,3$ be three pairwise disjoint lines in $\PP^3$, defining a unique smooth quadric $Q$. Denote by $\Lambda$ the variety of lines in $\PP^3$ meeting each $L_i$ in a point. Let $\underline{u}_i,\underline{v}_i\in \CC^4$ be fixed affine representatives of $u_i,v_i\in \PP^3$. For $(a_0,a_1)\in \PP^1$, we let $h_a\in \PP^3$ be the vector defining the plane spanned by 
\begin{align}   a_0u_1+a_1v_1,u_2,v_2.
\end{align}
There are affine representatives $\underline{h}_0, \underline{h}_1$ of $h_{(1:0)},h_{(0:1)}$ such that $h_a=\begin{bmatrix} \underline{h}_0 & \underline{h}_1
\end{bmatrix} a$. The equation
\begin{align}
h_a^T(b_0\underline{u}_3+b_1\underline{v}_3)=0
\end{align}
has one solution in $(b_0:b_1)\in \PP^1$, namely $b(a)=(h_a^T\underline{v}_3:-h_a^T\underline{u}_3)$, which is linear in $a\in \PP^1$. Then the map
\begin{align}\begin{aligned}
    \phi:\PP^1&\to \Lambda,\\
    (a_0:a_1)&\mapsto \iota(a_0\underline{u}_1+a_1\underline{v}_1,b_0(a)\underline{u}_3+b_1(a)\underline{v}_3),
\end{aligned}
\end{align}
is a parametrization of $\Lambda$. By construction, each line $\phi(a)$ meets $L_1$ and $L_3$. To see that it meets $L_2$, note that $\phi(a)$ lies inside the plane defined by $h_a$, which also contains $L_2$. In this plane, these two lines must meet. 

Further, $\phi(a)$ can be written $B_\Lambda \circ \nu(a)$ for a $6\times 3$ matrix $B_\Lambda$, and the Veronese embedding $\nu$. Now we argue that $B_\Lambda$ must be full rank. This is because $\phi$ and $\nu$ are injective and the image of $\nu$ spans $\PP^2$. As a consequence, $\Lambda$ is a degree-2 variety in $\PP^5$. 


\subsection{The Cayley parametrization}\label{ss: Cayley}

Consider the orthogonal and special orthogonal matrix groups of $n\times n$ matrices over a field $K$, 
\begin{align}
    \mathrm{O}_n(K):= \{A: A^TA=I\}\quad \textnormal{ and } \quad \mathrm{SO}_n(K):= \{A: A^TA=I, \det A=1\}.  
\end{align}
In our paper, we consider $K=\RR,\CC$. For these fields, $\mathrm{O}_n(K)$ is reducible and $ \mathrm{SO}_n(K)$ is irreducible \cite[Section 5.2]{boij1995introduction}. As varieties, they are both of dimension ${n\choose 2}$ over $K$. For a generic skew-symmetric matrix $S$, we have that $I-S$ is invertible. The Cayley parametrization of $\mathrm{SO}_n(K)$ sends a generic skew-symmetric matrix $S$ to 
\begin{align}
    O=(I+S)(I-S)^{-1}.
\end{align}
This map is injective, as we can recover $S$ from $O$, indeed we have $S=(O+I)^{-1}(O-I)$. The set of skew-symmetric matrices is ${n\choose 2}$-dimensional and therefore this injective map is dominant onto $\mathrm{SO}_n(K)$. By \cite[Theorem 4.3]{breiding2021algebraic}, it follows that $\overline{\mathrm{SO}_n(\RR)}=\mathrm{SO}_n(\CC)$. Therefore, we observe that by choosing generic real $S$, we parametrize generic complex $\mathrm{SO}_n(\CC)$ matrices. For the rest of the paper, we always put $K=\CC$ and simply write $\mathrm{SO}_n$.

In this paper, we make use of \textit{Stiefel manifolds} $\mathrm{St}(n,m)$, defined as follows. If $n\ge m$, define it to be the set of $n\times m$ matrices $A$ that are submatrices of some $n\times n$ matrix $O\in \mathrm{SO}_n$. If $n\le m$, define it to be the set of $n\times m$ matrices that are submatrices of some $m\times m$ matrix $O\in \mathrm{SO}_m$. To be clear, $\mathrm{St}_{n,m}$ is not a matrix group (unless $n=m$). However, it is irreducible as a variety, as it is a projection from $\mathrm{SO}_n$.   


\subsection{Euler characteristics}\label{ss: Euler}

There are many different approaches to defining the (topological) Euler characteristic. For instance, if we are given a triangulation of a topological space $\mathcal N$, the Euler characteristic $\chi(\mathcal N)$ is the alternating sum
\begin{align}
    k_0-k_1+k_2-\ldots,
\end{align}
where $k_i$ is the number of simplices of dimension $i$. Here, for $n=0,1,2\ldots$, a \textit{simplex} is a polytope of dimension $n$ with $n+1$ vertices, and a \textit{triangulation} is essentially a way of writing a space as a union of simplices that intersect nicely. Importantly, all real and complex algebraic varieties can be triangulated \cite{hofmann2009triangulation} with respect to Euclidean topology. Alternatively, the Euler characteristic can be defined via singular homology \cite[Chapter 2]{hatcher2005algebraic}. More generally, it is defined for sheafs \cite[Section 16]{Gathmann}\cite[Section 4]{maxim2019intersection}. It turns out that with respect to the Euclidean topology and the constant sheaf, this construction of the Euler characteristic coincides with that of singular homology \cite[Chapter 3]{bredon2012sheaf} \cite[Remark 2.5.12]{dimca2004sheaves}. 

Here, we collect some basic properties of Euler characteristics.

\begin{lemma}\label{le: Eul2} Let $f: \mathcal N\to \mathcal M$ be an isomorphism of varieties, then 
\begin{align}
    \chi(\mathcal N)=\chi(\mathcal M).
\end{align}
\end{lemma}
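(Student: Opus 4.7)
The plan is to reduce this statement to the well-known fact that the Euler characteristic of a triangulated space depends only on its homeomorphism type. Concretely, I would argue that an isomorphism of complex algebraic varieties is a homeomorphism in the Euclidean topology, and then invoke the homeomorphism invariance of $\chi$.

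First, I would note that both $f:\mathcal N\to \mathcal M$ and its inverse $f^{-1}$ are, by assumption, regular maps of varieties. Regular maps of complex algebraic varieties are continuous with respect to the Euclidean topology on source and target, so $f$ is a homeomorphism $\mathcal N \xrightarrow{\sim} \mathcal M$ with respect to the Euclidean topology.

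Second, I would invoke homeomorphism invariance of $\chi$, which holds for each of the equivalent definitions recalled in the preceding discussion. From the singular-homology point of view, the homeomorphism $f$ induces isomorphisms $H_i(\mathcal N;\QQ)\cong H_i(\mathcal M;\QQ)$ for every $i$, so the alternating sum $\sum_i (-1)^i \dim H_i$ computing $\chi$ is the same on both sides. Equivalently, from the triangulation perspective, one can transport a triangulation of $\mathcal N$ through $f$ to a triangulation of $\mathcal M$ with the same combinatorial structure, so the alternating sum $k_0-k_1+k_2-\ldots$ defining $\chi$ is identical in the two cases.

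The only point requiring care—really just bookkeeping—is to confirm that the several definitions of $\chi$ recalled in the preceding paragraph yield the same number for any given complex variety; this is precisely the content of the references cited there. Granted that, there is no real obstacle: the lemma follows immediately from the two observations above.
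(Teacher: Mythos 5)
Your argument is correct and follows the same route as the paper: an isomorphism of varieties is a homeomorphism in the Euclidean topology (both $f$ and $f^{-1}$ being regular, hence continuous), and homeomorphisms preserve the Euler characteristic, as recalled in the preliminaries via singular homology. The paper's proof is exactly this two-line observation, citing Hatcher for homeomorphism invariance.
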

\begin{proof} By \cite[Section 2.1]{hatcher2005algebraic}, homeomorphisms between topological spaces preserve the Euler characteristic, and isomorphisms between varieties are homeomorphisms (with respect to both Euclidean and Zariski topologies).
\end{proof}

\begin{lemma}\label{le: Eul1} Let $\mathcal N,\mathcal M$ be complex varieties, affine or projective. Then  
\begin{align}
    \chi(\mathcal M\cup \mathcal N)=\chi(\mathcal M)+\chi(\mathcal N)-\chi(\mathcal M\cap\mathcal  N).
\end{align}
    \end{lemma}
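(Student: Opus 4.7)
The plan is to reduce the identity to a counting statement via a compatible triangulation. The reference \cite{hofmann2009triangulation} cited just above gives a triangulation of any real or complex algebraic variety in the Euclidean topology, and what I would invoke is its relative refinement: one can choose a triangulation of $\mathcal{M}\cup\mathcal{N}$ in which $\mathcal{M}$, $\mathcal{N}$, and therefore $\mathcal{M}\cap\mathcal{N}$ all appear as subcomplexes. Such a relative triangulation either comes directly from the stronger form of the cited theorem (simultaneously triangulating a finite family of subvarieties), or can be obtained by triangulating each piece separately and then passing to a common subdivision along the intersection.

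With a compatible triangulation fixed, let $k_i(X)$ denote the number of $i$-simplices of the subcomplex corresponding to $X\in\{\mathcal{M},\mathcal{N},\mathcal{M}\cap\mathcal{N},\mathcal{M}\cup\mathcal{N}\}$. Every $i$-simplex of $\mathcal{M}\cup\mathcal{N}$ lies in $\mathcal{M}$ or in $\mathcal{N}$, and the simplices in both are precisely those of $\mathcal{M}\cap\mathcal{N}$; inclusion-exclusion on finite sets of simplices therefore yields
\[
k_i(\mathcal{M}\cup\mathcal{N}) = k_i(\mathcal{M}) + k_i(\mathcal{N}) - k_i(\mathcal{M}\cap\mathcal{N}).
\]
Multiplying by $(-1)^i$ and summing produces the desired identity in the projective case, where all counts are finite.

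In the affine case the triangulation can involve infinitely many simplices and the alternating sum need not converge literally, so I would instead appeal to the singular-homology definition of $\chi$ (which agrees with the simplicial count when the latter is defined, as noted in the previous subsection). The Mayer--Vietoris long exact sequence
\[
\cdots \to H_n(\mathcal{M}\cap\mathcal{N}) \to H_n(\mathcal{M})\oplus H_n(\mathcal{N}) \to H_n(\mathcal{M}\cup\mathcal{N}) \to H_{n-1}(\mathcal{M}\cap\mathcal{N}) \to \cdots
\]
applies because closed complex subvarieties admit open neighborhoods that deformation retract onto them, making $\{\mathcal{M},\mathcal{N}\}$ an excisive pair inside $\mathcal{M}\cup\mathcal{N}$. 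Since complex varieties have finitely generated homology vanishing above the real dimension, taking alternating sums of Betti numbers across this exact sequence and rearranging gives the formula.

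The only genuinely delicate step is the relative triangulation / excisive-pair input: once that is in place, the rest is bookkeeping. I would handle it by citing the standard semi-algebraic or subanalytic triangulation literature in the compact case, and invoking the existence of Whitney-stratified tubular neighborhoods of complex subvarieties to justify Mayer--Vietoris in the non-compact setting.
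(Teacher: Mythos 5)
Your argument is fine in the reading where $\mathcal M$ and $\mathcal N$ are both \emph{closed} in $\mathcal M\cup\mathcal N$ (e.g.\ closed subvarieties of a common ambient space): there the compatible-triangulation count in the compact case and the Mayer--Vietoris bookkeeping in the non-compact case are standard and correct, granted the relative triangulation and the NDR property of closed subvarieties. Note that the paper itself gives no argument here, only a pointer to Cappell--Shaneson and Maxim; in those sources the identity is deduced from the fact that for \emph{complex} algebraic varieties the ordinary (singular) Euler characteristic coincides with the compactly supported one, which is additive over arbitrary locally closed decompositions.

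That complex-specific input is exactly what is missing from your proof, and it matters for the generality the paper intends. Nothing in your argument uses that the varieties are complex, yet the remark immediately following the lemma shows the statement fails over $\RR$ with $\mathcal M=\{x\}$ and $\mathcal N=\RR\setminus\{x\}$ --- a decomposition in which $\mathcal N$ is \emph{not} closed in $\mathcal M\cup\mathcal N$. So the intended scope includes non-closed (locally closed) pieces such as $\mathcal N=\CC\setminus\{x\}$, and for these both of your tools break down: a subcomplex of a triangulation is necessarily closed, so a non-closed $\mathcal N$ cannot ``appear as a subcomplex'' of a triangulation of the union; and the Mayer--Vietoris sequence for the cover $\{\mathcal M,\mathcal N\}$ requires an excisive (e.g.\ closed NDR) pair, which fails when one piece is not closed (in the example $\operatorname{int}\mathcal M\cup\operatorname{int}\mathcal N\neq\mathcal M\cup\mathcal N$). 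A proof valid in the paper's generality must at some point invoke the equality $\chi=\chi_c$ for complex varieties (or an equivalent statement about odd-dimensional links / additivity of $\chi_c$), which is the actual content behind the citation; your argument proves the closed-pieces case but not this.
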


\begin{proof} This is explained in \cite[Section 2]{cappell2008euler} and \cite[Section 7]{maxim2019intersection}. 
\end{proof}
    
\Cref{le: Eul1} does not hold over the real numbers. For instance, consider $\chi(\RR)=1$, while $\chi(\{x\})=1$ and $\chi(\RR\setminus \{x\})=2$.

\begin{lemma}\label{le: Eul3} The Euler Characteristic of $\Gr(k,\PP^N)$ is ${N+1\choose k+1}$. In particular, the Euler characteristic of $\PP^N$ is $N+1$. 
\end{lemma}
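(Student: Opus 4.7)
The plan is to exhibit the standard Schubert cell decomposition of $\Gr(k,\PP^N)$ and then reduce the Euler characteristic computation to counting cells via the additivity in \Cref{le: Eul1}. Fix a complete flag $\{0\}=V_0\subset V_1\subset \cdots\subset V_{N+1}=\CC^{N+1}$. For each $(k+1)$-element subset $I=\{i_1<\cdots<i_{k+1}\}\subseteq \{1,\ldots,N+1\}$, define
\[
C_I\;=\;\{W\in \Gr(k,\PP^N)\,:\,\dim (W\cap V_j)=\#\{r:i_r\le j\}\text{ for all }j\}.
\]
These locally closed subsets partition $\Gr(k,\PP^N)$, and there are exactly $\binom{N+1}{k+1}$ of them.

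First, I would verify that each cell $C_I$ is isomorphic to an affine space $\CC^{d_I}$. This is a standard row-reduction argument: every $W\in C_I$ is represented by a unique $(k+1)\times (N+1)$ matrix in reduced row-echelon form whose pivot columns are precisely those indexed by $I$, and the entries to the right of each pivot (and outside the pivot columns) are free complex parameters. Being affine, $C_I$ is contractible and satisfies $\chi(C_I)=1$; the value $\chi(\CC^{d})=1$ follows from writing $\CC=\PP^1\setminus \{\infty\}$ and applying \Cref{le: Eul1} to $\PP^1=\CC\cup\{\infty\}$, then iterating to get $\chi(\CC^d)=1$.

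Next, I would upgrade \Cref{le: Eul1} into the form I actually need: for a closed subvariety $Y\subseteq Z$ with open complement $U=Z\sm Y$, one has $\chi(Z)=\chi(Y)+\chi(U)$. This follows by applying \Cref{le: Eul1} to the decomposition $Z=Y\cup \overline{U}$ with $Y\cap \overline{U}=\overline U\sm U$, and then inducting on $\dim(\overline U\sm U)<\dim Z$. Ordering the Schubert cells so that the closure of each cell is contained in the union of cells of equal or smaller dimension, this additivity lets me peel off one cell at a time and obtain
\[
\chi(\Gr(k,\PP^N))=\sum_{I}\chi(C_I)=\binom{N+1}{k+1}.
\]
The statement $\chi(\PP^N)=N+1$ then appears as the special case $k=0$.

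The main obstacle is the bookkeeping for the additivity argument, since \Cref{le: Eul1} as stated applies to closed unions rather than to open/closed stratifications. This is handled by the inductive reformulation above (or, equivalently, invoked directly from \cite[Section 2]{cappell2008euler}). Once additivity over locally closed decompositions is in hand, the identification of each Schubert cell with affine space and the combinatorial count of $(k+1)$-subsets finish the proof immediately.
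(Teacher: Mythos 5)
Your proof is correct and follows essentially the same route as the paper: decompose $\Gr(k,\PP^N)$ into the $\binom{N+1}{k+1}$ Schubert cells given by reduced row-echelon form (pivot placement), note each cell is an affine space with Euler characteristic $1$, and sum via additivity. You are somewhat more careful than the paper about upgrading \Cref{le: Eul1} to additivity over a locally closed stratification, but this is a refinement of the same argument, not a different one.
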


\begin{proof} An element $P$ of $\Gr(k,\PP^N)$ does not uniquely correspond to a set of spanning vectors $X_0,\ldots,X_k$. However, after Gaussian elimination we obtain a unique reduced row echelon form of the $(k+1)\times(N+1)$ matrix consisting of the rows $X_i^T$. Such a reduced row echelon has a $(k+1)\times (k+1)$ identity matrix as a submatrix. Each possible placement of this $I$ corresponds to a stratum of $\Gr(k,\PP^N)$, and each stratum is isomorphic to a power of $\CC$. These are contractible spaces of Euler characteristic $1$. There are ${N+1\choose k+1}$ to placements of $I$, implying that the Euler characteristic equals this number. 

Setting $k=0$, we have $\Gr(k,\PP^N)=\PP^N$, and the second statement follows from the first. This is a standard result, for instance found in \cite[Section 10.1]{may1999concise}.
\end{proof}


\section{Varieties from Vision} \numberwithin{equation}{section} \label{s: VV} Numerous algebraic varieties have been proposed in the context of calibration and triangulation in computer vision. For the purposes of this article, we focus on multiview varieties and anchored multiview varieties, arising from projections from $\PP^1,\PP^2$ and $\PP^3$. In this section we formally define these varieties and provide a complete list of them. We refer to varieties as \textit{trivial} if they are of dimension 0, and don't include these in our list.

The notation that we use is as follows. A full rank matrix is called a \textit{camera}, and a camera $C$ of size $(h+1)\times (N+1)$ induces a rational map $\PP^N\dashrightarrow \PP^h$ sending $X$ to $CX$. We define a \textit{camera arrangement} to be a list $\Ca=(C_1,\ldots,C_n)$ of camera matrices $C_i$ of sizes $(h_i+1)\times (N+1)$ with $n\ge 1$. Given a camera matrix $C$, let $c:=\ker (C)$ denote its \textit{center}. For a nonnegative integer $k$, we define the rational map $P\mapsto C\cdot P$ that sends a $k$-plane $P$ spanned by $X_0,\ldots,X_k$ to the $k$-plane spanned by $CX_0,\ldots,CX_k$ in Plücker coordinates. This map is well-defined precisely when $P$ does not intersect $c$, and we therefore always assume that $k\le N-\dim c-1$. Note that for $h=2$ and $k=1$, the map $\Gr(1,\PP^3)\dashrightarrow\Gr(1,\PP^2)$ is often instead defined as $CX_0\times CX_1$, where $\times$ is the cross-product \cite{breiding2022line,breiding2023line}. However, as noted in \Cref{ss: wedge}, up to permutation, these maps are equivalent.

\subsection{Multiview Varieties} \label{ss: MV} 

 A camera arrangement $\Ca$ defines a map as follows:
\begin{align}\begin{aligned}\label{eq: jointmap}
    \Phi_{\Ca,k}: \Gr(k,\PP^N)&\dashrightarrow \Gr(k,\PP^{h_1})\times \cdots\times\Gr(k,\PP^{h_n}),\\
    P&\mapsto \;\:(C_1\cdot P,\ldots,C_n\cdot P).
\end{aligned}
\end{align}
The \textit{(photographic) multiview variety} $\mathcal{M}_{\Ca,k}$ is the Zariski closure of the image of $\Phi_{\Ca,k}$. For the purposes of this paper, we restrict to the assumption that all $h_i$ are the same, i.e. each camera is of the same size. In this direction, we introduce the specialized map
\begin{align}\begin{aligned}
    \Phi_{\Ca,k}^{N,h}:\mathrm{Gr}(k,\PP^N)&\dashrightarrow \mathrm{Gr}(k,\PP^h)^n,\\
    P&\mapsto (C_1 \cdot P,\ldots, C_m\cdot P).
\end{aligned}
\end{align}
We write $\mathcal{M}_{\Ca,k}^{N,h}$ for the Zariski closure of the image of this map. Note that the camera arrangement $\Ca$ encodes both $N$ and $h$, however, for the sake of clarity, we often specify $N$ and $h$ in the notation of multiview varieties.

For a camera $C:\PP^N\dashrightarrow \PP^h$, with $h\ge N$, we have that $I=\wedge^{k+1} C^\dagger \wedge^{k+1}C$, i.e. the image of $C$ is isomorphic to the image of the identity camera $I:\PP^N\to \PP^N$. This motivates our focus on the case $h\le N$. Further, if $h=k$, then the image of $\wedge^k C$ is a single point, which explains our restriction $h>k$. The finite list of tuples $(N,h,k)$, whose associated multiview varieties are subject to study for us, satisfy $3\ge N\ge h>k\ge 0$:
\begin{align}\begin{aligned}\label{eq: all settings}
    (3,3,2), & & (3,3,1),  & & (3,3,0), & & (3,2,1),  & & (3,2,0), \\
    (3,1,0), & & (2,2,1), & & (2,2,0), & & (2,1,0), & & (1,1,0).
\end{aligned}
\end{align}
In an attempt to reduce the number of subscripts, and make notation more readable, we write 
\begin{align}
\mathcal{M}_{\Ca}^{N,h}:=\mathcal{M}_{\Ca,0}^{N,h},\quad  \mathcal{L}_{\Ca}^{N,h}:=\mathcal{M}_{\Ca,1}^{N,h}\quad \textnormal{and}\quad \mathcal{P}_{\Ca}^{N,h}:=\mathcal{M}_{\Ca,2}^{N,h}.
\end{align}
Further, we write $\mathcal M_k^{N,h}$ or $\mathcal M^{N,h},\mathcal L^{N,h},\mathcal P^{N,h}$ for $k=0,1,2$, respectively, for the \textit{family} of all multiview varieties $\mathcal M_{\Ca,k}^{N,h}$ for some camera arrangement $\Ca$ with cameras of size $(h+1)\times (N+1)$. Then \eqref{eq: all settings} corresponds the following finite list of ten types of multiview varieties:
\begin{enumerate}[label=(\alph*)]
  \item $\mathcal{M}^{3,3}$, $\mathcal{M}^{3,2}$, $\mathcal{M}^{3,1}$, $\mathcal{M}^{2,2}$, $\mathcal{M}^{2,1}$, $\mathcal{M}^{1,1}$;\label{eq: enum 11}
    \item $\mathcal{L}^{3,3}$, $\mathcal{L}^{3,2}$, $\mathcal{L}^{2,2}$;\label{eq: enum 22}
    \item $\mathcal{P}^{3,3}$.\label{eq: enum 33}
\end{enumerate}
To be clear, $\mathcal{M}^{3,2}$ is the standard multiview variety, the Zariski closure of the map \eqref{eq: Phi C}, and $\mathcal{L}^{3,2}$ is the standard line multiview variety from \cite{breiding2022line}. $\mathcal{M}^{3,1}$ is studied for radial cameras \cite{hruby2023four}, and $\mathcal{M}^{2,1}$, $\mathcal{M}^{1,1}$ are applied for the reconstruction of correspondences with incidence relations \cite{rydell2023theoretical}.


\subsection{Anchored Multiview Varieties}\label{ss: AMV}

Let $V=(V_1,\ldots,V_s)$ be a list of subspaces of $\PP^N$, and let $\lambda=(\lambda_1,\ldots,\lambda_s)$ be a list of nonnegative integers. 
For our purposes, a \textit{Schubert variety}\footnote{Technically, $\Lambda$ is an intersection of Schubert varieties} is a set 
\begin{align}
   \Lambda=\Lambda_{\lambda}^N(V,d):=\{P\in \mathrm{Gr}(k,\PP^N): \dim V_i\cap P\ge \lambda_i \textnormal{ for each } 1\le i \le s\}.
\end{align}
An \textit{anchored multiview variety} $\mathcal M_{\Ca,k}^{\Lambda,N,h}$ is the closure of the image of $\Phi_{\Ca,k}^{N,h}$, restricted to a Schubert variety $\Lambda$. In symbols,
\begin{align}
   \mathcal M_{\Ca,k}^{\Lambda,N,h}:=\overline{\Phi_{\Ca,k}^{N,h}\restriction_{\Lambda}}.
\end{align}
We always assume that the centers $c_i$ of the cameras $C_i$ in $\Ca$ do not meet any subspace $V_i$, and we only consider irreducible Schubert varieties. Our first important observation is the next result.

\begin{lemma}\label{le: N N} $\mathcal M_{\Ca,k}^{\Lambda,N,N}$ is linearly isomorphic to $\Lambda$.
\end{lemma}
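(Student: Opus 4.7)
The plan is to exhibit the isomorphism explicitly in Plücker coordinates, exploiting the fact that when $h=N$ every camera in $\Ca$ is a homography.

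First I would unpack the hypothesis: since $h=N$, each $C_i\in \Ca$ is a full-rank $(N+1)\times (N+1)$ matrix, and by the functoriality of wedge products recorded in \Cref{ss: wedge}, the $\binom{N+1}{k+1}\times\binom{N+1}{k+1}$ matrix $\wedge^{k+1}C_i$ is then invertible with inverse $\wedge^{k+1}(C_i^{-1})$. Consequently $P\mapsto C_i\cdot P$ is a regular automorphism of $\Gr(k,\PP^N)$, and $\Phi_{\Ca,k}^{N,N}$ is a regular---not merely rational---map defined on all of $\Gr(k,\PP^N)$.

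Next I would introduce the linear map between the affine cones over the Plücker ambients,
\begin{align*}
\Psi:\CC^{\binom{N+1}{k+1}}\to\bigl(\CC^{\binom{N+1}{k+1}}\bigr)^n,\qquad v\mapsto \bigl(\wedge^{k+1}C_1\cdot v,\ldots,\wedge^{k+1}C_n\cdot v\bigr).
\end{align*}
Since each block $\wedge^{k+1}C_i$ is invertible, $\Psi$ sends every non-zero vector to a tuple with no zero component, so it descends to a regular map $\overline\Psi:\PP^{\binom{N+1}{k+1}-1}\to\bigl(\PP^{\binom{N+1}{k+1}-1}\bigr)^n$; moreover, the rule $(w_1,\ldots,w_n)\mapsto \wedge^{k+1}(C_1^{-1})\cdot w_1$ provides a linear two-sided inverse to $\overline\Psi$ on its image. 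By construction $\overline\Psi\circ\iota=\Phi_{\Ca,k}^{N,N}$ on all of $\Gr(k,\PP^N)$.

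Finally, because $\Lambda$ is projective the image $\overline\Psi(\iota(\Lambda))$ is automatically Zariski closed, so
\begin{align*}
\mathcal M_{\Ca,k}^{\Lambda,N,N}\;=\;\overline{\Phi_{\Ca,k}^{N,N}\restriction_\Lambda}\;=\;\overline\Psi\bigl(\iota(\Lambda)\bigr),
\end{align*}
and the restriction of $\overline\Psi$ therefore realizes the claimed linear isomorphism. The only subtle point is what "linear isomorphism" ought to mean between varieties sitting in different projective ambients; this is settled by working on affine cones, where the concrete linear map $\Psi$ supplies both directions of the identification. No serious obstacle arises.
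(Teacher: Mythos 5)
Your argument rests on exactly the same key fact as the paper's proof: for a square invertible camera $C$, the induced map $\wedge^{k+1}C$ is an invertible linear map by \Cref{ss: wedge}, hence restricts to a linear isomorphism onto the image of $\Lambda$. You simply spell out what the paper leaves implicit (the product map on the $n$ factors, the left inverse via $\wedge^{k+1}(C_1^{-1})$, and the fact that the image of the projective variety $\Lambda$ is already closed so the Zariski closure is superfluous), so the proposal is correct and follows the paper's route.
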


With this lemma, we do not mean to say that the anchored multiview varieties $\mathcal M_{\Ca,k}^{\Lambda,N,N}$ are trivial or not interesting to study in their own right; their Euclidean distance degrees cannot be deduced simply by the fact that they are isomorphic to $\Lambda$. 

\begin{proof} Following \Cref{ss: wedge}, for an invertible matrix $C:\PP^N\to \PP^N$, the induced map $\wedge^{k+1} C:\Gr(k,\PP^N)\to\Gr(k,\PP^N)$ is linear and full rank. Since $\wedge^{k+1}C$ is a square matrix, it must be invertible. 
\end{proof}

We say that a positive-dimensional anchored multiview variety is \textit{proper} if $\Lambda$ is positive-dimensional, irreducible and proper, i.e. $ \Lambda\subsetneq \Gr(k,\PP^N)$. If $\dim V_i+k\ge N+d_i$, then $\dim V_i\wedge P\ge d_i$ for any $k$-plane $P$ by linear algebra. We may therefore assume that $\dim V_i-d_i<N-k$ for each $i$. For $ \Lambda_{k}^N(V,d)$ to be non-empty, we must have that $d_i\le k$ for each $i$. Finally, if $V_i=\PP^N$, then $\dim V_i\wedge P\ge d_i$ is satisfied for any $P$; we may assume $\dim V_i<N$ for each $i$.

Approaching a complete list of positive-dimensional and proper anchored multiview varieties arising from projections from $\PP^1,\PP^2$ and $\PP^3$, we list all irreducible, positive-dimensional and proper Schubert varieties for $N=1,2,3$ and $0\le d_i\le k<N$. For a given type of Schubert variety, write $\Omega $ for the set of all such Schubert varieties. There is an associated \textit{family} of multiview varieties $\mathcal M_k^{\Omega,N,h}$ or $\mathcal M^{\Omega,N,h},\mathcal L^{\Omega,N,h},\mathcal P^{\Omega,N,h}$ for $k=0,1,2$, respectively, that consists of all multiview varieties $\mathcal M_{\Ca,k}^{\Lambda,N,h}$ for $\Lambda\in \Omega$ and a camera arrangement $\Ca$ with cameras of size $(h+1)\times (N+1)$. With this notation, we enumerate all types of Schubert varieties and all of their corresponding families of multiview varieties below.
\begin{enumerate}[leftmargin =3.5em]
    \item[$k=0$:]  \label{enum: k = 0}    
    For $N=2$, there is one type:
    \begin{enumerate}
        \item[--] $\Lambda$ is the set of points contained in a line $L$. We write $\mathcal M^{L,2,2},\mathcal M^{L,2,1}$ for the associated families of multiview varieties.
    \end{enumerate}
   For $N=3$, there are two types: 
   \begin{enumerate}
       \item[--] $\Lambda$ is the set of points contained in a line $L$. We write $\mathcal M^{L,3,3},\mathcal M^{L,3,2},\mathcal M^{L,3,1}$ for the associated multiview varieties;
       \item[--] $\Lambda$ is the set of points contained in a plane $P$. We write $\mathcal M^{P,3,3},\mathcal M^{P,3,2},\mathcal M^{P,3,1}$ for the associated multiview varieties.
   \end{enumerate}
  
    \item[$k=1$:]\label{enum: k = 1} For $N=2$, there is one such Schubert variety:
    \begin{enumerate}
        \item[--] $\Lambda$ is the set of lines through a fixed point $X$. We write $\mathcal L^{X,2,2}$ for the associated multiview variety.
    \end{enumerate}
    For $N=3$, there are six types:
    \begin{enumerate}
        \item[--] $\Lambda$ is the set of lines through a fixed point $X$. We write $\mathcal L^{X,3,3},\mathcal L^{X,3,2}$ for the associated multiview varieties;
        \item[--] $\Lambda$ is the set of lines through $i\in \{1,2,3\}$ number of pairwise disjoint lines. We write $\mathcal L^{L,3,3},\mathcal L^{L,3,2},$ $\mathcal L^{L^2,3,3},\mathcal L^{L^2,3,2},\mathcal L^{L^3,3,3},\mathcal L^{L^3,3,2}$ for the associated multiview varieties;
        \item[--] $\Lambda$ is the set of lines contained in a plane $P$. We write $\mathcal L^{P,3,3},\mathcal L^{P,3,2}$ for the associated multiview varieties;
        \item[--] $\Lambda$ is the set of lines contained in a plane $P$ through a point $X\in P$. We write $\mathcal L^{(P,X),3,3},$ $\mathcal L^{(P,X),3,2}$ for the associated multiview varieties.
    \end{enumerate}
    
    \item[$k=2$:]\label{enum: k = 2} For $N=3$, there are two types of such Schubert varieties:
    \begin{enumerate}
        \item[--] $\Lambda$ is the set of planes through a fixed point $X$. We write $\mathcal P^{X,3,3}$ for the associated multiview variety;
        \item[--] $\Lambda$ is the set of planes contaning a fixed line $L$. We write $\mathcal P^{L,3,3}$ for the associated multiview variety.
    \end{enumerate}
\end{enumerate}

In summary, we have twelve different types of irreducible, positive-dimensional and proper Schubert varieties. To them, we have associate twenty-three positive-dimensional, proper anchored multiview varieties. Continuing the enumeration \Cref{eq: enum 11,eq: enum 22,eq: enum 33}, we list them as follows.

\begin{enumerate}[label=(\alph*)] \setcounter{enumi}{3}
    \item $\mathcal M^{L,3,3}$  $\mathcal M^{P,3,3}$, $\mathcal M^{L,3,2}$, $\mathcal M^{P,3,2}$, $\mathcal M^{L,3,1}$, $\mathcal M^{P,3,1}$, $\mathcal M^{L,2,2}$, $\mathcal M^{L,2,1}$;\label{eq: anch 11}
    \item $\mathcal L^{X,3,3}$, $\mathcal L^{L,3,3}$, $\mathcal L^{L^2,3,3}$, $\mathcal L^{L^3,3,3}$, $\mathcal L^{P,3,3}$,  $\mathcal L^{(P,X),3,3},$ $\mathcal L^{X,3,2}$, $\mathcal L^{L,3,2}$, $\mathcal L^{L^2,3,2}$, $\mathcal L^{L^3,3,2}$, $\mathcal L^{P,3,2}$, $\mathcal L^{(P,X),3,2}$,  $\mathcal L^{X,2,2}$;\label{eq: anch 22}
     \item $\mathcal P^{X,3,3}$, $\mathcal P^{L,3,3}$.\label{eq: anch 33}
\end{enumerate}
\Cref{eq: enum 11,eq: enum 22,eq: enum 33,eq: anch 11,eq: anch 22,eq: anch 33} together form a list of thirty-three multiview varieties. The main theorem of this paper states that (at most) fourteen of them are distinct with respect to a certain equivalence relation, and this is the subject of \Cref{s: ED-equiv}.

We take a closure look at the Schubert varieties defined by three pairwise disjoint lines, and their associated anchored multiview varieties.

\begin{example}\label{ex: L^3} Let $\Lambda$ denote the Schubert variety of lines intersecting three pairwise disjoint lines. In \Cref{ss: smooth quad} we gave a parametrization $\phi:\PP^1\to \Lambda$. For a full rank $4\times 4$ matrix $C$, $\wedge^2 C$ is a full rank $6\times 6$ matrix, and the image of $\wedge^2 C\circ \phi$ is a curve of degree-2 in $\Gr(1,\PP^3)$ by \Cref{ss: smooth quad}. 

Given a full rank $3\times 4$ matrix $C$, $\wedge^2 C$ is a full rank $3\times 6$ matrix, and we show that the image of $\wedge^2 C\circ \phi$ is a curve of degree-2 in $\Gr(1,\PP^2)$, under the assumption that the center $c$ is away from the smooth quadric $Q$ defined by $\Lambda$. We know that the image is at most of dimension 1, and at most of degree-2. Any \textit{back-projected plane}, the plane in $\PP^3$ of all lines projecting onto $\ell$, meets $Q$ in at most finitely many lines, because $c$ is away from $Q$. Then, if it the image were 0-dimensional, there would be infinitely many lines of $\Lambda$ that would map onto the same image $\ell\in \Gr(1,\PP^2)$ by $\wedge^2 C$, a contradiction to our statement about back-projected planes. Therefore the dimension is 1. If the degree of this curve were 1, then it would be a line in $\Gr(1,\PP^2)\cong\PP^2$. A linear space in $\Gr(1,\PP^2)$ is the set of lines through some fixed point $x\in \PP^2$. The back-projected line of $x$ must meet each line of $\Lambda$. Such a back-projected line exists exactly when the center lies in $Q$. Otherwise, the image cannot be degree 1.
 \hfill$\diamondsuit\,$ 
\end{example}

\subsubsection{Schubert varieties that are linearly isomorphic to projective subspaces} \label{sss: Schub}

Among the twelve types of Schubert varieties $\Lambda$ listed in this section, most of them are linearly isomorphic to some projective subspace. Below we describe these linear isomorphisms, from $\PP^{N_1}$ to $\Lambda$, where $N_1$ is the dimension of $\Lambda$. Note that the only Schubert varieties from the above that are not linear are the sets of lines through $i$ fixed pairwise disjoint lines. 

For $k=0$, there are two types of Schubert varieties, the set of points in a plane or in a line. The linear isomorphisms are from $\PP^{2}$ in the former case and from $\PP^1$ in the latter case. The isomorphisms are then $Y\to HY$ for a full rank matrix $H$ of appropriate size.

For $k=1$, there are four types of Schubert varieties with this property. The set of lines through a fixed point $X$ in $\PP^{N}$ is isomorphic to $\PP^{N-1}$ and an isomorphism sends $Y\in \PP^{N-1}$ to $\iota(X,HY)$ for some full rank $(N+1)\times N$ matrix $H$ whose image is away from $X$. The set of lines through a fixed point $X$ inside a plane $P$ is then isomorphic to $\PP^1$. Our map sends $Y\in \PP^{1}$ to $\iota(X,HY)$ for some full rank $4\times 2$ matrix $H$ whose image is inside $P$ away from $X$. The set of lines inside a fixed plane $P$ in $\PP^3$ can be parametrized as $(\wedge^2 H )Z=\iota(HY_0,HY_1)$, where $Y_0,Y_1\in \PP^2$ span $Z\in \Gr(1,\PP^2)\cong \PP^2$, and $H$ is a matrix whose columns span $P$.

For $k=2$, the set of planes through $X$ is 2-dimensional and is parametrized by $\iota(X,HY_0,HY_1)$, where $Y_0,Y_1\in \PP^2$ and $H$ is a full rank $4\times 3$ matrix whose image is away from $X$. This map can be viewed as a linear map sending $Z\in \PP^2$ to the join of $X$ with $(\wedge^2 H )Z$, where $Z=\iota(Y_0,Y_1)\in \Gr(1,\PP^2)\cong \PP^2$. The set of planes through a line $L=\iota(X_0,X_1)$ is 1-dimensional and is parametrized by $\iota(X_0,X_1,HY)$, where $Y\in \PP^1$ and $H$ is a full rank $4\times 2$ matrix whose image is away from $L$.

Observe that there cannot be any other linear isomorphisms between the Schubert varieties $\Lambda$ and projective spaces listed above, other than what we have described in each case. To see this, let $\phi_i:\PP^{N_1}\to \Lambda$ for $i=1,2$ both be linear isomorphisms. Then $\phi_1\circ \phi_2^{-1}$ is a linear isomorphism of $\PP^{N-1}$ to itself. This implies that $\phi_1^{-1}\circ \phi_2=H'$ for an invertible matrix $H'$, and $\phi_2=\phi_1\circ H'$. Note that, since $H'$ is square, there is a $H''$ such that $\wedge^2 H''=H'$.


\section{ED-Equivalence Catalogue}\label{s: ED-equiv}

In this section, we define ED-equivalence for multiview varieties and describe distinct equivalence classes with respect to this relation. As a consequence of this work, the list of thirty-three multiview varieties from \Cref{s: VV} can be shortened to a list of fourteen varieties for the purposes of determining set-theoretic equations, multidegrees, singular loci and so on. 

We next recall the definition of the Euclidean distance degree as introduced in \cite{draisma2016euclidean}. 
For a variety $\mathcal{X}\subseteq \RR^m$ and a point $u\in\RR^m$ outside the variety, we consider the problem to find the closest point on $\mathcal X$ to $u$:
\begin{equation}\label{eq :ED_problem}
    \mathrm{minimize}\quad \sum_{i=1}^m(u_i-x_i)^2\quad \textnormal{subject to} \quad x\in \mathcal{X}\setminus \mathrm{sing}(\mathcal X),
\end{equation}
where $\mathrm{sing}(\mathcal X)$ is the singular locus of $\mathcal X$. This is called the \textit{Euclidean distance problem} and models the process of error correction and fitting noisy data to a mathematical model $\mathcal{X}$. The \textit{Euclidean distance degree} (EDD) of $\mathcal X$ is the number of complex solutions to the critical point equations associated to~\eqref{eq :ED_problem}, called \textit{ED-critical} points, for a given generic point $u\in \RR^m.$
The EDD is an estimate of how difficult it is to solve this problem by exact algebraic methods. Consider a variety $\mathcal X$ in a product of projective spaces $(\PP^{\eta})^n$, for instance the anchored multiview varieties $\mathcal M_{\Ca,k}^{\Lambda,N,h}$ with $\eta:={h+1\choose k+1}-1$ being the dimension of the projective space that $\Gr(k,\PP^h)$ is embedded in via the Plücker embedding. For our purposes, the EDD of a variety $\mathcal X$ in a product of projective spaces $(\PP^{\eta})^n$, is the EDD of $\mathcal X\cap \big(U_1\times \cdots \times U_n\big)\subseteq (\RR^{\eta+1})^n$, where $U_i$ are the standard affine patches $x_0=1$ of $\PP^{\eta}$.\footnote{We point out that, in the case of a subvariety of projective space, our definition of EDD differs from that adopted in~\cite{draisma2016euclidean} as the EDD of the affine cone.
For (anchored) multiview varieties associated to generic camera arrangements, it is equivalent to work in a \textit{generic} affine patch, but this need not be the case in general.
}


Anchored multiview varieties $\mathcal M_{\Ca,k}^{\Lambda,N,h}$ are generalizations of multiview varieties. Indeed, setting $\Lambda=\Gr(k,\PP^N)$, we have $\mathcal M_{\Ca,k}^{\Lambda,N,h}=\mathcal M_{\Ca,k}^{N,h}$. With respect to this unifying framework, we make the following formal definition.

\begin{definition}\label{def: ED equiv} We say that two multiview varieties $\mathcal M_{\Ca_0,k_0}^{\Lambda_0,N_0,h_0}$ and $\mathcal M_{\Ca_1,k_1}^{\Lambda_1,N_1,h_1}$ are \textit{ED-equivalent} if
\begin{enumerate}[label=(\Alph*)]
\item the arrangements $\Ca_j=(C_{1,j},\ldots,C_{n,j})$ for $j=1,2$ have the same number of cameras;
\item there is a linear birational map $\phi: \Lambda_1\dashrightarrow \Lambda_0$;
    \item there is a linear map
    \begin{align}\begin{aligned}
        \varphi=(\varphi_1,\ldots,\varphi_n):(\PP^{\eta_{0}})^n&\dashrightarrow (\PP^{\eta_{1}})^n,\\
        (z_1,\ldots,z_n)&\mapsto (A_1z_1,\ldots, A_nz_n),
    \end{aligned}
    \end{align}
     where $A_i\in \mathrm{St}_{\eta_1+1,\eta_0+1}$, that restricts to an isomorphism between $\mathcal M_{\Ca_0,k_0}^{\Lambda_0,N_0,h_0}$ and $\mathcal M_{\Ca_1,k_1}^{\Lambda_1,N_1,h_1} $;
    \item we have $\wedge^{k_1+1}C_{i,1}=\varphi_i\circ \wedge^{k_0+1} C_{i,0}\circ \phi$ for each $i$;
    \item given a generic affine patch $U_0\subseteq (\PP^{\eta_{0}})^n$, and any affine patch $U_1\subseteq (\PP^{\eta_{1}})^n$ containing $\varphi(U_0\cap \mathcal M_{\Ca_0,k_0}^{\Lambda_0,N_0,h_0})$, we have
    \begin{align}
       \mathrm{EDD}(U_0\cap \mathcal M_{\Ca_0,k_0}^{\Lambda_0,N_0,h_0})=\mathrm{EDD}(U_1\cap \mathcal M_{\Ca_1,k_1}^{\Lambda_1,N_1,h_1});
    \end{align}
    \item $\varphi$ is a bijection between ED-critical points given data $u \in U_0$ and given data $\varphi(u)\in U_1$.
\end{enumerate}
\end{definition}


\begin{definition} 
We say that two families of (anchored) multiview varieties $\mathcal M_{k_0}^{\Omega_0,N_0,h_0}$ and $\mathcal M_{k_1}^{\Omega_1,N_1,h_1}$ are \textit{ED-equivalent} and write 
\begin{align}
    \mathcal M_{k_0}^{\Omega_0,N_0,h_0}\leftrightarrow \mathcal M_{k_1}^{\Omega_1,N_1,h_1}
\end{align}
if for any $n\ge 1$, the following holds: To each generic camera arrangement $\Ca_0$, generic $\Lambda_0\in \Omega_0$, generic linear isomorphism $\phi$, and generic $A_i$, there is a generic camera arrangement $\Ca_1$ and generic $\Lambda_1\in \Omega_1$ such that $\mathcal M_{\Ca_0,k_0}^{\Lambda_0,N_0,h_0}$ and $\mathcal M_{\Ca_1,k_1}^{\Lambda_1,N_1,h_1}$ are ED-equivalent, given the linear isomorphisms $\phi$ and $\varphi=(A_1,\ldots,A_n)$ and vice versa. This is an equivalence relation on families of multiview varieties. If this relation not hold, the families are called \textit{ED-distinct}. 
\end{definition}

The notion of genericity requires irreducibility. In this direction, note that the set of camera matrices of a certain size is an irreducible variety. Also, each set of Schubert varieties $\Omega$ listed in \Cref{ss: AMV} is irreducible. The set of Stiefel manifolds is an irreducible variety as in \Cref{ss: Cayley}. The irreducibility of the linear isomorphisms $\phi$ is discussed later in \Cref{le: Omega}.

The importance of ED-equivalence is two-fold. Firstly, by its definition, it motivates that if one family of multiview varieties is ED-related to another, then we may restrict our study of most properties to the latter variety. This is due to the linear isomorphism assumed in \Cref{def: ED equiv}. The second virtue of this concept was demonstrated in \cite{rydell2023theoretical}. The idea is that $\mathcal M_{\Ca_0,k_0}^{\Lambda_0,N_0,h_0}$ and  $\mathcal M_{\Ca_1,k_1}^{\Lambda_1,N_1,h_1}$ may be embedded into different ambient spaces. Therefore, when computing critical points with numerical software, it may be faster to solve the Euclidean distance problem over one multiview variety rather than the other. If this is the case, then we may solve the problem over in the faster setting and translate the solution via the bijection $\varphi$ of critical points to the setting of the other.

The main theorem of this papers classifies all multiview varieties from \Cref{s: VV} into distinct equivalence classes with respect to ED-equivalence:

\begin{theorem}\label{thm: ED-cat} All 33 families of (anchored) multiview varieties from \Cref{s: VV} are ED-related to one of the following 14 families of (anchored) multiview varieties:
\begin{enumerate}[label=(\Roman*)]
    \item\label{enum: M classes} $\mathcal{M}^{3,3}$, $\mathcal{M}^{3,2}$, $\mathcal{M}^{3,1}$, $\mathcal{M}^{2,2}$, $\mathcal{M}^{2,1}$, $\mathcal{M}^{1,1}$;
    \item\label{enum: L classes} $\mathcal{L}^{3,3}$, $\mathcal{L}^{3,2}$, $\mathcal L^{L,3,3}$, $\mathcal L^{L,3,2}$, $ \mathcal L^{L^2,3,3}$, $\mathcal L^{L^2,3,2},$  $\mathcal L^{L^3,3,2},\mathcal L^{L^3,3,3}$.
\end{enumerate}
Among these 14 families, none (except possibly $\mathcal L^{L^3,3,2}$ and $\mathcal L^{L^3,3,3}$) are ED-equivalent.

Moreover,
\begin{enumerate}[label=(\roman*)]
    \item\label{enum: M33} $\mathcal{M}^{3,3}\leftrightarrow \mathcal{P}^{3,3}$;
    \item \label{enum: M22} $\mathcal{M}^{2,2}\leftrightarrow\mathcal{M}^{P,3,3}, \mathcal{M}^{P,3,2},  \mathcal{L}^{2,2}, \mathcal{L}^{X,3,3}, \mathcal{L}^{P,3,3}, \mathcal{L}^{P,3,2},\mathcal{P}^{X,3,3}$; 
      \item\label{enum: M21}  $\mathcal{M}^{2,1}\leftrightarrow \mathcal{M}^{P,3,1},\mathcal{L}^{X,3,2}$;
    \item\label{enum: M11} $\mathcal{M}^{1,1}\leftrightarrow \mathcal{M}^{L,3,3}, \mathcal{M}^{L,3,2}, \mathcal{M}^{L,3,1}, \mathcal{M}^{L,2,2}, \mathcal{M}^{L,2,1},\mathcal L^{(P,X),3,3},\mathcal L^{(P,X),3,2},\mathcal{L}^{X,2,2},\mathcal{P}^{L,3,3}$.
\end{enumerate}
\end{theorem}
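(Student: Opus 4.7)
The proof naturally splits into two tasks: constructing the explicit equivalences claimed in (i)--(iv), and ruling out all other equivalences among the fourteen representatives listed in (I)--(II).

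For the equivalences, the plan is to exploit the classification of linearly-parametrizable Schubert varieties in \Cref{sss: Schub}. Given an equivalence $\mathcal{M}_{k_0}^{\Omega_0,N_0,h_0}\leftrightarrow \mathcal{M}_{k_1}^{\Omega_1,N_1,h_1}$, I would fix a generic arrangement $\Ca_1$ and generic $\Lambda_1$, together with the linear parametrization $\psi:\PP^{N'}\to \Lambda_1$ furnished in \Cref{sss: Schub}. Pulling back $\wedge^{k_1+1} C_{i,1}$ by $\psi$ yields a matrix $M_i$ of size $(\eta_1+1)\times (N'+1)$, and a QR-type factorization $M_i=A_iB_i$ then produces $A_i\in \mathrm{St}_{\eta_1+1,\eta_0+1}$ (realized rationally via the Cayley construction of \Cref{ss: Cayley}) together with a residual factor $B_i$. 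Identifying $B_i$ with $\wedge^{k_0+1}C_{i,0}\circ \phi$ reverse-engineers both the arrangement $\Ca_0$ and the linear isomorphism $\phi$, so that conditions (C) and (D) of \Cref{def: ED equiv} hold on the image. Because the columns of $A_i$ are orthonormal, the action of $\varphi_i$ on a generic affine patch is an isometric embedding, which simultaneously forces the EDD equality (E) and the ED-critical bijection (F). The outlier (i), which relates points and planes in $\PP^3$, admits the same argument after first using the Plücker self-duality $\Gr(0,\PP^3)\cong \Gr(2,\PP^3)$ to reduce to the parametric setting.

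For ED-distinctness among the fourteen representatives, the key observation is that condition (C) of \Cref{def: ED equiv} demands that $\varphi$ restrict to an isomorphism between the two varieties, so every topological invariant must agree. My plan is to tabulate, for each representative, the dimension, the multidegree (from the calculations in \Cref{s: Multdeg}), and the topological Euler characteristic (computed via \Cref{le: Eul1,le: Eul2,le: Eul3}), then verify pairwise that at least one invariant distinguishes each pair. Many cases are immediate: $\mathcal{L}^{3,3}$ and $\mathcal{L}^{3,2}$ are embedded in products of different Grassmannians, while the members of (I) of type $\mathcal{M}^{N,h}$ already differ in dimension.

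The main obstacle will be the borderline cases where the obvious invariants nearly coincide, most notably $\mathcal{L}^{L^3,3,2}$ versus $\mathcal{L}^{L^3,3,3}$ which the statement explicitly allows as possibly equivalent. The Stiefel-lifting strategy breaks down precisely here because the underlying Schubert variety $\Lambda$ carries the degree-two Veronese parametrization described in \Cref{ex: L^3} rather than a linear one, so any prospective $\phi$ must respect a quadratic rather than a linear structure. Closing out the comparison of the remaining borderline pairs---particularly inside the large collapse class (iv) where several distinct geometric settings all fall onto $\mathcal{M}^{1,1}$-type curves---will likely require invoking multidegrees or singular loci beyond plain dimension counts.
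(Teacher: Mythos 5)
Your overall scaffolding (linear parametrizations from \Cref{sss: Schub}, Stiefel factors via \Cref{ss: Cayley}, a critical-point bijection in the spirit of \Cref{prop: EDD bij}, invariants for distinctness) matches the paper, but the central step of your equivalence argument has a genuine gap. You propose a QR-type factorization $M_i=A_iB_i$ and then ``identify $B_i$ with $\wedge^{k_0+1}C_{i,0}\circ\phi$''; this identification is precisely the nontrivial claim, not a bookkeeping step. For the line- and plane-anchored cases ($k_0\ge 1$) the matrices $\wedge^{k_0+1}C_{i,0}$ form a proper subvariety of all matrices of that size (they satisfy Plücker-type constraints), so the residual factor of an arbitrary factorization has no reason to lie in the image of $(C_{i,0},\phi)\mapsto \wedge^{k_0+1}C_{i,0}\circ\phi$. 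Moreover, your factorization is done camera by camera, but condition (D) of \Cref{def: ED equiv} requires one common $\phi$ (equivalently one common $\Lambda_0$) for all $n$ cameras, and the family-level definition additionally demands that \emph{generic} data on one side correspond to \emph{generic} data on the other. The paper resolves exactly these points by proving that the map $\Psi:(A_i,C_{i,0},\phi)\mapsto A_i\circ\wedge^{k_0+1}C_{i,0}\circ\phi$ of \eqref{eq: map to be dom} is dominant with irreducible source (irreducibility via the Cayley parametrization and \Cref{le: Omega}); dominance is checked by hand only for $k_0=0$ and by an exact-arithmetic Jacobian-rank computation for $k_0=1,2$. Your proposal supplies no substitute for this verification, and also treats the EDD equality and critical-point bijection as immediate from orthonormal columns, whereas the affine-patch subtlety is exactly what \Cref{prop: EDD bij} (quoting prior work) is needed for.

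On distinctness, your invariant-tabulation plan diverges from the paper, which first argues that ED-equivalence forces a linear isomorphism between the single-camera images $\wedge^{k+1}C_i(\Lambda)$, reducing to four small groups, and then uses singularity (\Cref{le: Sing}), Euler characteristics, and the degree-two camera image of \Cref{ex: L^3}. Two concrete problems with your route: first, ``embedded in products of different Grassmannians'' is not a valid reason for distinctness---the theorem itself asserts equivalences across different ambient products (e.g.\ $\mathcal{M}^{2,2}\leftrightarrow \mathcal L^{P,3,3}$), since the $A_i$ may be rectangular; second, several same-dimensional pairs are not separated by the invariants you list without substantial extra work (e.g.\ $\mathcal{M}^{3,2}$ and $\mathcal{M}^{3,1}$ both have all multidegrees equal to $1$, so you would need Euler characteristics that are not computed in the paper, and you would also need to justify that multidegree is preserved under the linear maps allowed in \Cref{def: ED equiv}). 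The paper's single-camera-image argument disposes of such pairs without any of these computations, so as written your Part II is incomplete as well.
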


This theorem leaves as an open problem to determine if $\mathcal L^{L^3,3,2}$ and $\mathcal L^{L^3,3,3}$ are ED-equivalent. The method of proof that works in the other cases, fails here. However, what we can say is that varieties of these families are linearly isomorphic as stated next, and their ED degrees are conjecturally the same via numerical computations, as in \Cref{s: conj}.

\begin{proposition}\label{prop: L^3} Let $\Lambda$ be the Schubert variety of lines meeting three fixed disjoint lines in $\PP^3$. The multiview variety $\mathcal L_{\Ca_0}^{\Lambda,3,3}$ is linearly isomorphic to $\mathcal L_{\Ca_1}^{\Lambda,3,2}$, where $\Ca_1$ is any camera arrangement with centers away from the smooth quadric $Q$ defined by $\Lambda$.
\end{proposition}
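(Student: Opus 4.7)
The plan is to reduce both varieties to images of the Veronese curve in $\PP^2$ under explicit linear maps, and then construct the desired isomorphism componentwise. Using the parametrization $\phi=B_\Lambda\circ \nu:\PP^1\to \Lambda$ described in \Cref{ss: smooth quad}, where $\nu:\PP^1\to \PP^2$ is the Veronese embedding and $B_\Lambda$ is a $6\times 3$ matrix of full rank, I would write the multiview maps on $\Lambda$ as
\begin{align*}
    a\mapsto (M_{1,0}\nu(a),\ldots,M_{n,0}\nu(a)) \quad\text{and}\quad a\mapsto (M_{1,1}\nu(a),\ldots,M_{n,1}\nu(a)),
\end{align*}
where $M_{i,j}:=\wedge^2 C_{i,j}\cdot B_\Lambda$. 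The sizes are $6\times 3$ for $j=0$ and $3\times 3$ for $j=1$. The first key input is \Cref{ex: L^3}: the image curve in each component has degree exactly $2$, which forces each $M_{i,0}$ to have rank $3$, and likewise forces each $M_{i,1}$ to have rank $3$ (using the assumption that the centers of $\Ca_1$ lie off $Q$).

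With this in hand, each $M_{i,1}$ is an invertible $3\times 3$ matrix, and each $M_{i,0}$ admits a left inverse $M_{i,0}^\dagger$ of size $3\times 6$ with $M_{i,0}^\dagger M_{i,0}=I_3$. The plan is then to define
\begin{align*}
    A_i:=M_{i,1}M_{i,0}^\dagger\in \CC^{3\times 6}, \qquad B_i:=M_{i,0}M_{i,1}^{-1}\in \CC^{6\times 3},
\end{align*}
which induce linear rational maps $\varphi_{01}=(A_1,\ldots,A_n):(\PP^5)^n\dashrightarrow (\PP^2)^n$ and $\varphi_{10}=(B_1,\ldots,B_n):(\PP^2)^n\to (\PP^5)^n$. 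Computing on the parametrization, $A_i(M_{i,0}\nu(a))=M_{i,1}\nu(a)$ and $B_i(M_{i,1}\nu(a))=M_{i,0}\nu(a)$, so both maps restrict to morphisms between $\mathcal L_{\Ca_0}^{\Lambda,3,3}$ and $\mathcal L_{\Ca_1}^{\Lambda,3,2}$. A direct matrix computation then gives $A_iB_i=I_3$ and $B_iA_i=M_{i,0}M_{i,0}^\dagger$, which acts as the identity on points of the form $M_{i,0}\nu(a)$; therefore $\varphi_{10}\circ \varphi_{01}$ and $\varphi_{01}\circ \varphi_{10}$ restrict to the identity on the two multiview varieties respectively, yielding the desired linear isomorphism.

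The main obstacle is justifying that each $M_{i,0}$ and each $M_{i,1}$ has full rank $3$, because this is what allows the pseudo-inverses to exist and the rational maps to be defined on the respective multiview varieties. Both assertions were established in \Cref{ex: L^3} by analyzing back-projected (hyper)planes and using the hypothesis that the centers of $\Ca_1$ avoid $Q$; I would simply cite this. A secondary point to address is that $\varphi_{01}$ has an indeterminacy locus (the kernels of the $A_i$), and one must observe that the parametrization $a\mapsto (M_{i,0}\nu(a))_i$ never lands in this locus—which follows since $M_{i,0}\nu(a)$ is always in the image of $M_{i,0}$, on which $A_i$ is injective. No other subtleties arise, and the argument only uses that $\Lambda\cong \PP^1$ via the Veronese parametrization, together with the rank computations from \Cref{ex: L^3}.
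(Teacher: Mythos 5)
Your proposal is correct and follows essentially the same route as the paper: parametrize $\Lambda$ as $B_\Lambda\circ\nu$, use \Cref{ex: L^3} to conclude that $M_{i,1}=(\wedge^2 C_{i,1})B_\Lambda$ is invertible, and transfer between the two varieties by the componentwise linear map $M_{i,0}M_{i,1}^{-1}$. The only cosmetic difference is the inverse map: the paper writes it directly as $\wedge^2(C_{i,1}C_{i,0}^{-1})$, whereas you use $M_{i,1}M_{i,0}^\dagger$ with a left inverse of the full-rank $6\times 3$ matrix $M_{i,0}$, which amounts to the same thing.
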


\begin{proof} As in \Cref{ss: smooth quad}, the map $\PP^1\to \Lambda$ sends $a$ to $B_{\Lambda}\nu(a)$, where $B_{\Lambda}$ is a full rank $6\times 3$ matrix and $\nu$ is the Veronese embedding. Let $C_0$ be any $4\times 4$ camera and let $C_{1}$ be any $3\times 4$ cameras with center away from $Q$. The image of $\wedge^2 C_{1}$ over $\Lambda$ is degree-2 as in \Cref{ex: L^3}, implying that $M:=(\wedge^2 C_{1} )B_{\Lambda}$ is a rank 3, i.e. it is an invertible $3\times 3$ matrix. Then the linear map $(\wedge^2C_{0})B_{\Lambda}M^{-1}$ sends $(\wedge^2C_{1})L$ to $(\wedge^2C_{0})L$ for each $L\in \Lambda$. The inverse of this map is $\wedge^2\big( C_{1}C_{0}^{-1}\big)$. This construction describes for each factor a linear isomorphism as in the statement. 
\end{proof}

\subsection{Proof of main theorem}

To prove \Cref{thm: ED-cat}, we need the help of a few lemmas. 

\begin{lemma}\label{le: ClosIsomo} Let $\psi:\mathcal X \to\mathcal Y$ be an isomorphism and let $U\subseteq \mathcal X,V\subseteq \mathcal Y$ be sets whose Euclidean closures equal their Zariski closures. If $\psi(U)=V$, then $\psi(\overline{U})=\overline{V}$.
\end{lemma}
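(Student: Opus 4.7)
The plan is to reduce the statement to the elementary topological fact that a homeomorphism takes closures to closures, using the hypothesis to bridge the gap between Zariski and Euclidean closures. The bar $\overline{\,\cdot\,}$ in the statement refers to the Zariski closure (since we are working with varieties), but the trick is that the hypothesis forces us to pass through the Euclidean closure along the way.

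First I would recall that, since $\psi : \mathcal X \to \mathcal Y$ is an isomorphism of varieties, it is a homeomorphism with respect to both the Zariski topology and the Euclidean topology (this is noted explicitly in the paper in the proof of \Cref{le: Eul2}, and is standard: regular maps are continuous in both topologies, and the same applies to $\psi^{-1}$). For any continuous bijection with continuous inverse in a topology $\tau$, and any subset $S$, one has $\psi(\overline{S}^{\tau}) = \overline{\psi(S)}^{\tau}$. Applying this to $U$ in the Euclidean topology gives
\begin{equation*}
\psi\bigl(\overline{U}^{\mathrm{Eucl}}\bigr) = \overline{\psi(U)}^{\mathrm{Eucl}} = \overline{V}^{\mathrm{Eucl}},
\end{equation*}
where the second equality uses the hypothesis $\psi(U)=V$.

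Next I would invoke the standing hypothesis on $U$ and $V$: their Euclidean closures coincide with their Zariski closures. Hence $\overline{U}^{\mathrm{Eucl}} = \overline{U}$ and $\overline{V}^{\mathrm{Eucl}} = \overline{V}$, and substituting into the displayed equation yields $\psi(\overline{U}) = \overline{V}$, as required.

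The only step that requires any care is making sure we use the hypothesis on both sides simultaneously; the proof would not go through if we only had the closure coincidence for one of $U$ or $V$, because the continuous-image-of-closure identity naturally lives in a single fixed topology. Everything else is formal, so I do not anticipate any substantive obstacle.
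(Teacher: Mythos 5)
Your proof is correct and takes essentially the same route as the paper: the paper's argument is just an explicit sequence-level unwinding of the fact that $\psi$ is a Euclidean homeomorphism (so it preserves Euclidean closures), combined with the closure-coincidence hypothesis for $U$ and $V$ exactly as you use it. Your abstract packaging is slightly cleaner, and it even sidesteps the paper's small imprecision of invoking ``continuity of $\psi$'' where continuity of $\psi^{-1}$ is what is actually needed.
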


\begin{proof} Take a point $v\in \overline{V}\setminus V$. Then there is a sequence $V\ni v^{(n)}\to v$ in Euclidean topology such that $u^{(n)}=\psi^{-1}(v^{(n)})\in U$ converges in Euclidean topology by continuity of $\psi$ to a point $u\in \overline{U}$ for which $\psi(u)=v$. We have shown $\overline{V}\subseteq \psi(\overline{U})$. Similarly we show $\overline{U}\subseteq \psi^{-1}(\overline{V})$ from which it follows that $\psi(\overline{U})\subseteq \overline{V}$.
\end{proof}

\begin{proposition}\label{prop: EDD bij} Let
$\mathcal{X}_0, \mathcal{X}_1, \mathcal{Y}_0, \mathcal{Y}_1$ be varieties, with
\begin{align}
    \mathcal X_j\subseteq \PP^{N_j},\quad \mathcal Y_j\subseteq (\PP^{\eta_{j}})^n\quad \textnormal{ for }j=0,1,
\end{align}
and let $f_0:\mathcal X_0\dashrightarrow \mathcal Y_0$ be a rational map. 
Consider the linear map
\begin{align}\begin{aligned}
    \varphi: (\PP^{\eta_{0}})^n&\dashrightarrow (\PP^{\eta_{1}})^n,\\
    (z_1,\ldots, z_n)&\mapsto (A_1z_1,\ldots, A_nz_n)
\end{aligned}
\end{align}
where each $A_i$ is an orthogonal matrix. 
Suppose there exists a birational map $\phi:\mathcal X_0\dashrightarrow \mathcal X_1$, and further that $\varphi$ restricts to an isomorphism 
$\varphi\restriction_{\mathcal{Y}_0} : \mathcal Y_0\to \mathcal Y_1$. Then $f_1:=\varphi\circ f_0\circ \phi^{-1}:\mathcal X_1\dashrightarrow \mathcal Y_1$ is a rational map with the properties below. 

The map $\varphi$ restricts to an isomorphism from $\overline{\mathrm{Im}\; f_0}$ to $\overline{\mathrm{Im}\; f_1}$. Further, let $U_0\subseteq (\PP^{\eta_{0}})^n$ be an affine patch. The ED-critical points $x^*$ of $~U_0\cap \overline{\mathrm{Im}\; f_0}$, given generic data $u\in U_0 $, are in bijection with the ED-critical points $y^*$ of $~U_1\cap \overline{\mathrm{Im}\; f_1}$, given data $\varphi(u)$, where $U_1\subseteq (\PP^{\eta_{1}})^n$ is any affine patch containing $\varphi(U_0\cap \overline{\mathrm{Im}\; f_0})$, via $x^*\mapsto y^*=\varphi (x^*)$. 
\end{proposition}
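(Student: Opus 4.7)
The plan is to split the proof into two parts: first show that $\varphi$ descends to an isomorphism between the closed images, then exploit orthogonality of each $A_i$ to transport the ED-critical equations via a Euclidean isometry of affine charts.

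First, I would note that $f_1 := \varphi \circ f_0 \circ \phi^{-1}$ is rational as a composition of rational and birational maps. Writing $W \subseteq \mathcal X_1$ for the open dense domain of $\phi^{-1}$, the image $\mathrm{Im}\; f_1 = \varphi(f_0(\phi^{-1}(W)))$ shares its Zariski closure with $\varphi(\mathrm{Im}\; f_0)$ since $\phi^{-1}(W) \subseteq \mathcal X_0$ is open dense. Appealing to constructibility of images of rational maps (so that their Euclidean and Zariski closures agree) and \Cref{le: ClosIsomo} applied to the isomorphism $\varphi\restriction_{\mathcal Y_0}\colon \mathcal Y_0 \to \mathcal Y_1$, I would conclude $\overline{\mathrm{Im}\; f_1} = \varphi(\overline{\mathrm{Im}\; f_0})$; restricting $\varphi\restriction_{\mathcal Y_0}$ then supplies the claimed isomorphism $\overline{\mathrm{Im}\; f_0} \xrightarrow{\sim} \overline{\mathrm{Im}\; f_1}$.

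Next, for the ED bijection, I would realize each factor of the affine patch $U_0 = \prod_i U_{0,i}$ as an affine hyperplane $U_{0,i} = \{z \in \mathbb C^{\eta_0+1} : h_i^T z = 1\}$ carrying the restricted Euclidean metric from $\mathbb C^{\eta_0+1}$. Because each $A_i$ is orthogonal, it acts as a linear isometry of $\mathbb C^{\eta_0+1}$ and carries $U_{0,i}$ isometrically onto the affine hyperplane $U'_{0,i} := \{w : (A_i h_i)^T w = 1\}$, which is another affine chart of $\mathbb P^{\eta_0}$. Setting $U_0' := \prod_i U'_{0,i}$, the restriction $\varphi\colon U_0 \to U_0'$ is then a linear Euclidean isometry, so $\|\varphi(u) - \varphi(x)\|^2 = \|u - x\|^2$ and $\varphi\bigl(T_x(U_0 \cap \overline{\mathrm{Im}\; f_0})\bigr) = T_{\varphi(x)}(U_0' \cap \overline{\mathrm{Im}\; f_1})$. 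Hence $u - x^*$ is orthogonal to $T_{x^*}(U_0 \cap \overline{\mathrm{Im}\; f_0})$ if and only if $\varphi(u) - \varphi(x^*)$ is orthogonal to $T_{\varphi(x^*)}(U_0' \cap \overline{\mathrm{Im}\; f_1})$, establishing the bijection in the case $U_1 = U_0'$.

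The hardest step will be passing from the specific patch $U_0'$ to an arbitrary affine patch $U_1 \supseteq \varphi(U_0 \cap \overline{\mathrm{Im}\; f_0})$, since changing affine charts in general changes the squared Euclidean distance function used to define critical points. My plan is to use the assumed genericity of $u$ together with standard transversality and generic-smoothness arguments from the ED-degree theory of~\cite{draisma2016euclidean}, in order to conclude that replacing $U_0'$ by $U_1$ neither creates nor destroys critical points in a neighborhood of $\varphi(U_0 \cap \overline{\mathrm{Im}\; f_0})$. Combined with the isometry bijection for $U_1 = U_0'$, this will give $x^* \mapsto \varphi(x^*)$ as the desired bijection for any valid $U_1$.
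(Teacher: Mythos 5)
Your first claim (that $\varphi$ restricts to an isomorphism $\overline{\mathrm{Im}\,f_0}\to\overline{\mathrm{Im}\,f_1}$) is argued exactly as in the paper, via constructibility and \Cref{le: ClosIsomo}, and is fine. The paper disposes of the second claim by citing \cite[Appendix B.4]{rydell2023theoretical}; you instead attempt a direct argument, and this is where there is a genuine gap. Your isometry argument silently assumes each $A_i$ is a \emph{square} orthogonal matrix acting on $\CC^{\eta_0+1}$. In the proposition as it is actually used (see condition (C) of \Cref{def: ED equiv} and Part I of the proof of \Cref{thm: ED-cat}), the $A_i$ are rectangular Stiefel matrices between spaces of different dimensions, e.g.\ $3\times 6$ matrices mapping $\PP^5\dashrightarrow\PP^2$ with $A_iA_i^T=I$ rather than $A_i^TA_i=I$. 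In that case your key computation $(A_ih_i)^TA_iz=h_i^TA_i^TA_iz=h_i^Tz$ fails, $\varphi$ is not an ambient isometry, and $A_i(U_{0,i})$ is not an affine chart of the target projective space (you even write $\PP^{\eta_0}$ where the target is $\PP^{\eta_1}$), so the "isometric transport of tangent spaces and residuals" step does not go through in the only case that matters for the paper. Any correct argument must use that $\mathcal Y_0$ sits inside a product of low-dimensional linear subspaces on which the $A_i$ behave well, which is exactly the bookkeeping carried out in the cited appendix.

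The second gap is the step you yourself flag as hardest: passing from your special patch $U_0'$ to an arbitrary affine patch $U_1\supseteq \varphi(U_0\cap\overline{\mathrm{Im}\,f_0})$. Appealing to "standard transversality and generic-smoothness arguments from \cite{draisma2016euclidean}" does not address the actual difficulty: changing the affine patch rescales the homogeneous representatives of both the data and the points of the variety non-uniformly, so it changes the squared-distance objective itself, not merely its critical locus by a perturbation; the paper's own footnote in \Cref{s: ED-equiv} stresses that the ED problem genuinely depends on the patch in non-generic situations. So "replacing $U_0'$ by $U_1$ neither creates nor destroys critical points" is precisely the statement that needs proof, and it is obtained in \cite[Appendix B.4]{rydell2023theoretical} by writing out and comparing the critical-point equations, not by a transversality argument. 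As it stands, your proposal proves the conclusion only for square $A_i$ and for the one special patch $U_1=U_0'$, which falls short of the proposition.
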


\begin{proof} To see that $\varphi$ restricts to an isomorphism, we use \Cref{le: ClosIsomo}. The other part follows from \cite[Appendix B.4]{rydell2023theoretical}.
\end{proof}

\begin{lemma}\label{le: Omega} Fix an item among \Cref{enum: M33,enum: M22,enum: M21,enum: M11}. Let $\mathcal M^{N_1,h_1}$ be the family of point multiview variety to the left of $\leftrightarrow$, and let $\mathcal M_{k_0}^{\Omega_0,N_0,h_0}$ be a family of anchored multiview variety on the right. The closure $\Sigma_0$ of the set of linear isomorphisms  $\phi:\PP^{N_1}\to \Lambda_0$ for some $\Lambda_0\in \Omega_0$ is an irreducible variety. 
\end{lemma}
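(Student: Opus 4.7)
The plan is to exhibit a parametrization of $\Sigma_0$ by an irreducible variety. Once such a parametrization is in hand, irreducibility of $\Sigma_0$ follows from the standard facts that the image of an irreducible variety under a regular map is irreducible, and that the Zariski closure of an irreducible set is irreducible.

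First I would note that a linear isomorphism $\phi: \PP^{N_1} \to \Lambda_0$ is the restriction of a linear map $\PP^{N_1} \to \PP^{\eta_0}$, where $\eta_0 = \binom{N_0+1}{k_0+1} - 1$ is the dimension of the ambient Plücker space, whose image equals $\Lambda_0$. Each such $\phi$ corresponds to a point in the projective space of $(\eta_0+1) \times (N_1+1)$ matrices modulo scalars, so $\Sigma_0$ makes sense as a subvariety of this projective space.

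Next I would go through the families appearing on the right-hand side of items \Cref{enum: M33,enum: M22,enum: M21,enum: M11}. All of them are covered by \Cref{sss: Schub}, which provides an explicit parametrization of the linear isomorphisms $\PP^{N_1} \to \Lambda_0$ for each relevant $\Omega_0$. In every case the parametrization takes the form of a regular map
\begin{equation}
    \Theta_0 \longrightarrow \Sigma_0,
\end{equation}
where $\Theta_0$ is an open subset of the product of (a) a Grassmannian or product of Grassmannians encoding the Schubert parameters---the point $X$, the line $L$, the plane $P$, or the incident pair $(P,X)$ defining $\Lambda_0$---and (b) a space of full-rank matrices $H$ of appropriate size, cut out by open conditions such as non-incidence with the Schubert parameters. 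Each factor is irreducible, and so is their product; hence $\Theta_0$ is irreducible. The unanchored case $\mathcal P^{3,3}$ in \Cref{enum: M33} fits the same template with $\Lambda_0 = \Gr(2,\PP^3)$ and $\Theta_0 = \PGL_4$ (acting via the duality $\Gr(2,\PP^3) \cong \PP^3$).

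Finally I would conclude: the image of $\Theta_0$ under the morphism above is an irreducible constructible set, and its Zariski closure is therefore irreducible. The uniqueness observation concluding \Cref{sss: Schub}---that any two linear isomorphisms $\PP^{N_1} \to \Lambda_0$ onto the same $\Lambda_0$ differ by pre-composition with an element of $\PGL_{N_1+1}$---guarantees that every isomorphism of the desired form is hit by $\Theta_0$ (up to reparametrization of the source), so the closure is indeed $\Sigma_0$. The main obstacle is clerical rather than conceptual: one must match up the parametrizations of \Cref{sss: Schub} with each family appearing in \Cref{enum: M33,enum: M22,enum: M21,enum: M11} and verify that the open conditions imposed by genericity of $\phi$ are consistent with those defining $\Theta_0$; but once this matching is done, irreducibility follows uniformly from the argument above.
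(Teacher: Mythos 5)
Your argument is correct and takes essentially the same route as the paper's proof: treat the non-anchored cases directly (the paper does $\mathcal P^{3,3}$ and also $\mathcal L^{2,2}$, which you omit but which is covered identically with $\PGL_3$ in place of $\PGL_4$), and for the anchored families use the explicit parametrizations of \Cref{sss: Schub} by an irreducible parameter space (Schubert data together with full-rank matrices $H$), so that $\Sigma_0$ is the closure of the image of an irreducible variety under a regular map and hence irreducible. The surjectivity point you raise via the uniqueness observation at the end of \Cref{sss: Schub} is consistent with the paper, since precomposition by an invertible matrix stays within each parametrized family.
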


\begin{proof} First we consider the non-anchored multiview varieties $\mathcal P^{3,3}$ and $\mathcal L^{2,2}$. In these cases, $\Omega_0$ contains only one Schubert variety $\Lambda_0$; $\Gr(2,\PP^3)\cong \PP^3$, respectively $\Gr(1,\PP^2)\cong \PP^2$. Then any $\phi$ is any invertible linear map of the corresponding size, which clearly corresponds to an irreducible variety.

We lay the groundwork for the remaining cases in \Cref{sss: Schub}. We showed that the linear maps $\phi$ were parametrized by full rank matrices $H$ of fixed size that depend on the fixed Schubert variety $\Lambda_0$. It is not hard to see that a generic such matrix $H$ corresponds to a generic $\Lambda_0\in \Omega_0$.
\end{proof}

We divide the proof of \Cref{thm: ED-cat} into two parts for readability. The first one deals with the equivalences of \Cref{enum: M33,enum: M22,enum: M21,enum: M11}, and the other with the fact the distinction of the families of \Cref{enum: M classes,enum: L classes}.

\begin{proof}[Proof of \Cref{thm: ED-cat}, Part I]  Fix an item among \Cref{enum: M33,enum: M22,enum: M21,enum: M11}. Let $\mathcal M^{N_1,h_1}$ be the family of point multiview varieties to the left of $\leftrightarrow$, and let $\mathcal M_{k_0}^{\Omega_0,N_0,h_0}$ be a family of anchored multiview varieties on the right. Given a generic camera arrangement $\Ca_0$, and a generic $\Lambda_0\in \Omega_0$, consider $\mathcal M_{\Ca_0,k_0}^{\Lambda_0,N_0,h_0}$. We aim to use \Cref{prop: EDD bij}, and so in the notation of that result, write $\mathcal X_1=\PP^{N_1}$ and $\mathcal X_0=\Lambda_0$. Then $\mathcal X_1$ and $\mathcal X_0$ are linearly isomorphic, and we let $\phi:\PP^{N_1}\to \Lambda_0$ denote such a linear isomorphism. For instance, in \Cref{enum: M33}, the set of points in $\PP^3$ and the set of planes in $\PP^3$ are isomorphic via the identity map.

Next, define 
\begin{align}
\mathcal Y_0:=\overline{(\wedge^{k_0+1}C_{1,0})(\Lambda_0)}\times \cdots \times \overline{(\wedge^{k_0+1}C_{n,0})(\Lambda_0)}.
\end{align}
Recall that $\eta_0={h_0+1\choose k_0+1}-1$ refers to the dimension of $\PP^{\eta_0}$ that $\Gr(k_0,\PP^{h_0})$ is embedded in via the Plücker embedding, and let $A_i$ be generic orthogonal matrices of sizes $(h_1+1)\times(\eta_0+1)$.  One can check that $\Lambda_0$ lies in a subspace of $\PP^{\eta_0}$ of dimension at most $h_1$. This guarantees that generic $A_i$ are well-defined on the camera images $\overline{(\wedge^{k_1+1}C_{i,1})( \Lambda_1)}$.  Define
\begin{align}
    \mathcal Y_1:=\big(\overline{A_1\circ C_{1,0}(\PP^{N_1})}\big)\times \cdots \times \big(\overline{A_n\circ C_{n,0}(\PP^{N_1})}\big)=(\PP^{h_1})^n.
\end{align}
Then set $C_{i,1}:=A_i\circ \wedge^{k_0+1}C_{i,0}\circ \phi$. In the case that $C_{i,1}$ are full rank, we may proceed as follows. Define $\varphi: (\PP^{\eta_0})^n\dashrightarrow (\PP^{\eta_1})^n$ by sending each $z_i$ to $A_iz_i$; by \Cref{le: ClosIsomo}, $\varphi$ restricts to an isomorphism from $\mathcal Y_0$ to $\mathcal Y_1$. We can now apply \Cref{prop: EDD bij} to see that $\mathcal M_{\Ca_0,k_0}^{\Lambda_0,N_0,h_0}$ is ED-equivalent to $\mathcal M_{\Ca_1}^{N_1,h_1}$. 

As in \Cref{le: Omega}, let $\Sigma_0$ denote the closure of the set of linear isomorphisms $\phi:\PP^{N_1}\to \Lambda_0$ for some $\Lambda_0\in \Omega_0$. The proof is complete if we can show that the map 
\begin{align}\begin{aligned}
 \label{eq: map to be dom}
    \Psi: \mathrm{St}_{h_1+1,\eta_0+1}\times \PP(\CC^{(\eta_1+1) \times (\eta_0+1)})\times \Sigma_0 & \dashrightarrow\PP(\CC^{(h_1+1)\times (N_1+1)}),\\
    (A_i,C_{i,0},\phi)&\mapsto A_i\circ \wedge^{k_0+1}C_{i,0}\circ \phi,   
\end{aligned}
\end{align}
where $\mathrm{St}_{h_1+1,\eta_0+1}$ is the Stiefel manifold, is a dominant map. The domain of this map is irreducible by \Cref{ss: Cayley} and \Cref{le: Omega}. Therefore, if the map is dominant, then the output of a generic input is generic, and the preimage of a generic $(h_1+1)\times (N_1+1)$ camera $C_{i,1}$ contains a generic triplet $(A_i,C_{i,0},\phi)$ for which $C_{i,1}=A_i\circ \wedge^{k_0+1}C_{i,0}\circ \phi^{-1}$. Since $A_i$ has by construction more rows than columns, $A_i^TA_i=I$ and this would imply that $A_i^T\circ C_{i,1}\circ \phi^{-1}= \wedge^{k_0+1}C_{i,0}$, which shows the ``vice versa'' part of the ED-equivalence. 

In the case $k_0=0$, we can check by hand that $\Psi$ is dominant. Indeed, given generic $C_{i,0}$ and $\phi$, it suffices to note that $C_{i,0}\circ \phi$ is a generic $(h_0+1)\times(N_1+1)$. This is because $C_{i,0}$ is a generic $(h_0+1)\times (N_0+1)$ matrix and $\phi$ is a generic $(N_0+1)\times (N_1+1)$ matrix. Then, multiplying with a full rank $(h_1+1)\times (h_0+1)$ matrix $A_i$ on the left, we get an arbitrary $(h_1+1)\times (N_1+1)$ matrix.

We have checked that the map \eqref{eq: map to be dom} is dominant in each case corresponding to $k_0=1,2$ in \texttt{Julia} \cite{bezanson2012julia}. We do this by parametrizing $A_i,C_{i,0},\phi$ and looking at the Jacobian of $\Psi$. If the Jacobian at a generic point $X$ is full rank, then around that point, the image of $\Psi$ contains a Euclidean open neighbourhood of $\Psi(X)$ and thus $\Psi $ must be dominant.
This can be checked in exact arithmetic using random integer values.
\end{proof}


\begin{lemma}\label{le: Sing} The Schubert variety of lines $\Lambda$ meeting a fixed line $L$ in $\PP^3$ has exactly one singular point, $L$ itself. 

As a consequence, any member of the family $\mathcal L^{L,3,3}$ is singular and any member of the family $\mathcal L^{L,3,2}$ is singular if it has least two cameras with generic centers. 
\end{lemma}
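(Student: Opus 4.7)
The plan is to first identify the Schubert variety $\Lambda$ as a tangent hyperplane section of the Klein quadric and deduce that its unique singular point is $L$, then transfer singularity to each family of multiview varieties. For $h=3$ this is a direct isomorphism argument; for $h=2$ it requires a tangent space dimension count at $L$, which is the main difficulty.

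\textbf{Part 1.} Under the Plücker embedding, $\Gr(1,\PP^3)$ is the smooth Klein quadric $Q\subseteq\PP^5$, defined by a non-degenerate quadratic form with associated bilinear form $B$. A direct computation with $\det[u\ v\ X\ Y]$ shows that the lines $L=\iota(u,v)$ and $M=\iota(X,Y)$ meet if and only if $B(L,M)=0$. Hence $\Lambda=Q\cap H_L$, where $H_L:=\{M:B(L,M)=0\}$ is precisely the projective tangent hyperplane to $Q$ at $L$. The intersection of a smooth quadric with one of its tangent hyperplanes is a rank-$4$ quadric cone in $\PP^4$ whose unique singular point is the point of tangency (easily checked in the normal form $x_0x_1+x_2x_3+x_4x_5=0$ with tangent hyperplane $x_1=0$); thus $\mathrm{sing}(\Lambda)=\{L\}$. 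Uniqueness can also be seen directly: $M\in\Lambda$ is singular iff $T_MQ\subseteq H_L$ iff $H_M=H_L$, which by non-degeneracy of $B$ forces $M=L$.

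\textbf{The family $\mathcal{L}^{L,3,3}$.} Here each $\wedge^2 C_i$ is an invertible $6\times 6$ matrix and hence a linear automorphism of $\Gr(1,\PP^3)$. Projection onto the first factor of $\Gr(1,\PP^3)^n$ is a morphism $\mathcal{L}_{\Ca,1}^{\Lambda,3,3}\to \wedge^2 C_1(\Lambda)$ whose composition with $\Phi:=\Phi_{\Ca,1}^{3,3}\restriction_\Lambda$ is the isomorphism $\wedge^2 C_1\restriction_\Lambda$. It follows that $\Phi$ is a bijective morphism with a morphism inverse $(\wedge^2 C_1)^{-1}\circ\pi_1$, so $\mathcal{L}_{\Ca,1}^{\Lambda,3,3}\cong\Lambda$ inherits the singular point $\Phi(L)$.

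\textbf{The family $\mathcal{L}^{L,3,2}$.} Since $\dim\mathcal{L}_{\Ca,1}^{\Lambda,3,2}\le\dim\Lambda=3$, it is enough to exhibit a point of the image whose Zariski tangent space has dimension at least $4$. I will show that the derivative of $\Phi:=\Phi_{\Ca,1}^{3,2}\restriction_\Lambda$ at $L$ is injective on $T_L\Lambda$; combined with $\dim T_L\Lambda=\dim T_LQ=4$ (from Part 1), this yields $\dim T_{\Phi(L)}\mathcal{L}_{\Ca,1}^{\Lambda,3,2}\ge 4$, and hence the image is singular at $\Phi(L)$. The derivative induced by each $\wedge^2 C_i$ on tangent spaces is the map $H_L/\langle L\rangle\to\CC^3/\langle \wedge^2 C_iL\rangle$ given by $\wedge^2 C_i$; its kernel equals $\bigl((\ker\wedge^2 C_i+\langle L\rangle)\cap H_L\bigr)/\langle L\rangle$, which has dimension exactly $2$ once we use $L\notin\ker\wedge^2 C_i$ (the center $c_i$ avoids $L$ by the standing assumption) and $\ker\wedge^2 C_i\not\subseteq H_L$ (a generic condition on $c_i$). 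For $n\ge 2$ cameras with generic centers, these two $2$-planes inside the $4$-dimensional space $T_L\Lambda$ intersect trivially, so the combined derivative is injective. The main technical obstacle is precisely this last step: performing the kernel computation with the correct quotients and verifying that the family of $2$-planes produced by varying the centers $c_i\in\PP^3\setminus L$ sweeps out enough of $\Gr(2,T_L\Lambda)$ for two generic members to meet only at the origin, which also requires checking that the loci of non-generic camera arrangements have positive codimension.
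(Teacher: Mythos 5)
Your proof is correct, and for the $h=2$ half it takes a genuinely different route from the paper. For the singular locus of $\Lambda$ itself, the paper chooses coordinates, parametrizes $\Lambda$, obtains the equations $X_5=0$ and $X_1X_4-X_2X_3=0$, checks radicality, and applies the Jacobian criterion; you instead identify $\Lambda$ coordinate-freely as the tangent-hyperplane section $Q\cap H_L$ of the Klein quadric, i.e.\ a rank-$4$ quadric cone with vertex $L$ --- the same conclusion by a cleaner, more conceptual path. Your $\mathcal L^{L,3,3}$ step is essentially the paper's (\Cref{le: N N}). For $\mathcal L^{L,3,2}$ the paper transports the singularity by claiming that two cameras whose centers together with $L$ span $\PP^3$ give a map that is a local isomorphism near $L$ in the Euclidean topology; your argument instead bounds the Zariski tangent space at $\Phi(L)$ from below, showing $d\Phi_L$ is injective on the $4$-dimensional space $T_L\Lambda=H_L/\langle L\rangle$ while the variety has dimension at most $3$ (and exactly $3$ by \Cref{prop: dim L}); this is more explicit and sidesteps the local-isomorphism assertion. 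The step you flag as the main obstacle is in fact easy to close: injectivity of the combined differential is a Zariski-open condition on the pair of centers, so one explicit example suffices. Take $L=\mathrm{span}(e_1,e_2)$, $c_1=e_3$, $c_2=e_4$; then $H_L=\{p_{34}=0\}$, the kernel plane $W_1:=\ker\wedge^2C_1\cap H_L$ is spanned by the Plücker vectors of lines joining $c_1$ to points of $L$ (the coordinate directions $p_{13},p_{23}$), and $W_2$ by those joining $c_2$ to points of $L$ (directions $p_{14},p_{24}$); since $W_1+W_2+\langle L\rangle$ is all of $H_L$, the two kernels $K_1,K_2$ of the component differentials meet only in $0$ inside $T_L\Lambda$, and genericity follows by openness. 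Note also that your hypothesis $\ker\wedge^2C_i\not\subseteq H_L$ is automatic from the standing assumption $c_i\notin L$ (the kernel is exactly the plane of lines through $c_i$, and not every line through $c_i$ meets $L$), so no extra genericity is needed for the dimension-$2$ kernel computation.
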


\begin{proof} We may take the line $L$ to be spanned by $(1:0:0:0)$ and $(0:1:0:0)$, because any other such Schubert variety differs by an invertible map $\PP^5\to \PP^5$. Then $\Lambda$ is the image of a restricted Plücker embedding
\begin{align}
\CC^5 \dashrightarrow \PP^5\nonumber \\
    \iota( \begin{bmatrix}
        \lambda & \mu & 0 & 0 \end{bmatrix}^T, \begin{bmatrix}
        0 & a & b & c \end{bmatrix}^T)=\begin{bmatrix}
        \lambda a: \lambda b: \lambda c: \mu b:\mu c:0
    \end{bmatrix},
\end{align}
for $\lambda,\mu,a,b,c\in \CC$. This image is given by the equations $X_5=0$ and $X_1X_4-X_2X_3=0$. Since $X_1X_4-X_2X_3$ is an irreducible polynomial, the ideal generated by these two equations is radical. We check that the rank of the Jacobian drops precisely at the point $(1:0:0:0:0:0)\in \PP^5$, which represents the line $L$ in Plücker coordinates.  

Define $\Lambda$ to be the set of lines through $L$. Since by \Cref{le: N N},  $\mathcal L_{\Ca}^{\Lambda,3,3}\cong \Lambda,$ $\mathcal L_{\Ca}^{\Lambda,3,3}$ is singular. Regarding  $\mathcal L_{\Ca}^{\Lambda,3,2}$, note that the projection map given two cameras, whose centers together with $L$ span $\PP^3$, is injective around $L$. Indeed, in a Euclidean neighbourhood of $L$, the map is an isomorphism, and isomorphisms send singular points to singular points. 
\end{proof}

\begin{proof}[Proof of \Cref{thm: ED-cat}, Part II] 
To see that all classes of \Cref{enum: M classes,enum: L classes} are distinct (except possibly $\mathcal L^{L^3,3,2}$ and $\mathcal L^{L^3,3,3}$), we firstly note that those whose domains are of different dimensions must be in distinct classes, because there can be no birational map $\phi$ in this case. Next, if $\mathcal M_{\Ca_0,k_0}^{\Lambda_0,N_0,h_0}$ and  $\mathcal M_{\Ca_1,k_1}^{\Lambda_1,N_1,h_1}$ are of the same class, then there must be a linear isomorphism between the images of generic cameras $\wedge^{k_0+1}C_{i,0}:\Lambda_0\dashrightarrow \Gr(k_0,\PP^{h_0})$ and $\wedge^{k_1+1}C_{i,1}:\Lambda_1\dashrightarrow \Gr(k_1,\PP^{h_1})$. This argument directly shows that the only possible equivalences are between
\begin{enumerate}
    \item $\mathcal{M}^{3,3}$, $\mathcal L^{L,3,3}$;\label{enum: II 3 3}   \item $\mathcal{M}^{3,2}$, $\mathcal L^{L,3,2}$;\label{enum: II 3 2}
    \item $\mathcal{M}^{2,2}$, $\mathcal L^{L^2,3,3}$, $\mathcal L^{L^2,3,2}$;\label{enum: II 2 2}
    \item $\mathcal{M}^{1,1}$, $\mathcal L^{L^3,3,2}$, $\mathcal L^{L^3,3,3}$.\label{enum: II 1 1}
\end{enumerate}
\Cref{enum: II 3 3,enum: II 3 2} consists of distinct classes, because $\mathcal L^{L,3,3},\mathcal L^{L,3,2}$ are singular by \Cref{le: Sing}, while $\mathcal{M}^{3,3}$ is isomorphic to $\PP^3$ by \Cref{le: N N} and therefore smooth, and $\mathcal{M}^{3,2}$ is smooth for $n\ge 3$ generic cameras \cite{trager2015joint}. 

For \Cref{enum: II 2 2}, we have by \Cref{le: N N} that $\mathcal{M}^{2,2}\cong \PP^2$, respectively $\mathcal L^{L^2,3,3}\cong \PP^1\times \PP^1$, with Euler characterisics $3$, respectively $4$. For multiview varieties of the family $\mathcal L^{L^2,3,2}$ with $n$ generic cameras, one can calculate the topological Euler characteristic $\chi(\mathcal L_\Ca^{L^2,3,2})=4+2n$ as in \cite[Lemma D.1]{rydell2023theoretical} or \cite{EDDegree_point}. Euler characteristics are preserved under isomorphisms, showing that the three families of varieties in \Cref{enum: II 2 2} are ED-distinct.

We are left to investigate \Cref{enum: II 1 1}. Let $\Lambda$ be the Schubert variety of lines meeting three fixed lines. Let $C_0$ be a camera of size $3\times 4$ or $4\times 4$. By \Cref{ex: L^3}, the image $\wedge^2 C_0(\Lambda)$ is a degree-2 curve. However, the image $C_1(\PP^1)$ given a $2\times 2$ matrix $C_1$ is linear. Therefore there cannot be a linear isomorphism between then images, and $\mathcal M^{1,1}$ is ED-distinct from $\mathcal L^{L^3,3,2}$ and $\mathcal L^{L^3,3,3}$.
\end{proof}

\section{Set-Theoretic Equations}\label{s: Set-Theoretic}

In the applied setting, set-theoretic equations for multiview varieties are important as they provide constraints on the cameras, given data of image tuples, allowing for the recovery of the cameras. As a consequence of the fact that ED-equivalence entails linear isomorphism and the equivalences listed in \Cref{thm: ED-cat}, we may without restriction only state set-theoretic descriptions for the fourteen varieties of \Cref{enum: M classes,enum: L classes}. In this section, we work with non-generic camera arrangements. However, we do assume that the Schubert varieties $\Lambda_k^N(V,\lambda)$ are generic in the sense that the camera centers do not meet the subspaces of $V$. 

We work with back-projected planes. This is a standard approach in algebraic vision, and was formalized in for generalized multiview varieties in \cite[Section 1]{rydell2023triangulation}. Given an image subspace $p\in \Gr(k,\PP^h)$, its \textit{back-projected plane} in $\PP^N$ is the $(N-h+k)$-dimensional subspace of $k$-planes that project onto $p$. For instance, for the point projection $\PP^3\to\PP^3$, the back-projected planes are points. For the projection $\PP^3\to \PP^2$, the back-projected planes are lines , and are aptly called \textit{back-projected lines} instead. For $\Gr(1,\PP^3)\dashrightarrow\Gr(1,\PP^2)$, the back-projected planes are planes.

We start by determining the dimensions of the (anchored) multiview varieties.

\begin{proposition} Let $N\in \{1,2,3\}$. Then
\begin{align}
    \dim \mathcal M_{\Ca}^{N,h}= N,
\end{align}
if and only if \textnormal{(1):} $h=N$, or \textnormal{(2):} $h=N-1$ and there are two disjoint point centers, or \textnormal{(3):} $h=N-2$ and there are three or more non-concurrent line centers.
\end{proposition}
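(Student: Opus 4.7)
The plan is to reduce the claim to computing the generic fiber of $\Phi_{\Ca}:\PP^N\dashrightarrow(\PP^h)^n$. Since $\PP^N$ is irreducible and $N$-dimensional, we always have $\dim \mathcal{M}_\Ca^{N,h}\le N$, with equality if and only if $\Phi_\Ca$ is generically finite. By the description of back-projected planes given just before this proposition, for any $X\in\PP^N$ away from every camera center the fiber $\Phi_\Ca^{-1}(\Phi_\Ca(X))$ coincides with the intersection $\bigcap_{i}\pi_i(X)$, where $\pi_i(X):=\mathrm{span}(c_i,X)$ is the back-projected plane in $\PP^N$ through $C_iX$, a projective subspace of dimension $N-h$. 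It thus suffices to determine when this intersection equals $\{X\}$ for generic $X$.

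For sufficiency I argue case by case. In case (1), each $C_i$ is an invertible homography and a single camera already realizes $\Phi_\Ca$ as an isomorphism $\PP^N\to\PP^N$. In case (2), each $\pi_i(X)$ is the line through $X$ and the point center $c_i$, so for two distinct centers $c_1\ne c_2$ the lines $\pi_1(X),\pi_2(X)$ lie in the plane $\mathrm{span}(c_1,c_2,X)$ and meet only at $X$. In case (3) we have $N=3$ and $h=1$, so each $\pi_i(X)$ is a plane in $\PP^3$ containing the line center $c_i$ and $X$. For any two line centers $c_1,c_2$, the intersection $\pi_1(X)\cap\pi_2(X)$ is a line through $X$: if $c_1,c_2$ are skew this is the transverse meet of two distinct planes, and if $c_1,c_2$ meet at a point $q$ it is the line joining $X$ to $q$. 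Provided $c_1,c_2,c_3$ are non-concurrent, the third plane $\pi_3(X)$ cuts this line only at $X$ for generic $X$, so the fiber collapses to $\{X\}$.

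For necessity I reverse each condition. If $h<N$ with only one camera, the fiber is the entire back-projected plane, of dimension $N-h\ge 1$. If $h=N-1$ and all cameras share a common point center $c$, every $\pi_i(X)$ equals the line $\mathrm{span}(c,X)$ and the fiber contains this whole line. If $N=3$, $h=1$, and all line centers pass through a common point $p$, then every $\pi_i(X)$ contains both $p$ and $X$ and hence the line through them, so the fiber has dimension at least $1$; if instead there are at most two distinct line centers, two planes in $\PP^3$ meet in at least a line, again forcing a positive-dimensional fiber.

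The main obstacle is the fine geometry of case (3), where the pairwise incidence type of the three line centers affects the structure of $\pi_1(X)\cap\pi_2(X)$. The key observation is that even when $c_1,c_2$ happen to meet at a point $q$, non-concurrency ensures $q\notin c_3$, so $\mathrm{span}(c_3,X)$ avoids $q$ for $X$ generic; combined with the skew subcase, this reduces all possibilities for three non-concurrent lines to the uniform conclusion that $\pi_1(X)\cap\pi_2(X)\cap\pi_3(X)=\{X\}$.
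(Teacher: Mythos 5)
Your proposal is correct and follows essentially the same route as the paper: bound $\dim \mathcal M_{\Ca}^{N,h}$ by $N$, then use back-projected planes and the fiber dimension theorem with a case analysis on (1), (2), (3), including the paper's observations that a common center (point or concurrency point) forces a positive-dimensional fiber. If anything, you are slightly more explicit than the paper in case (3), treating the subcases of intersecting versus skew line centers and the "at most two distinct centers" degeneration, where the paper simply asserts that three generic back-projected planes meet in a single point.
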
 

\begin{proof} General formulae for dimensions of point multiview varieties are known \cite{li2018images,rydell2023triangulation}. In this setting, me may use simpler arguments. 

\textnormal{(1):} For $h=N$, we are done by \Cref{le: N N}.

\textnormal{(2):} If $h=N-1$, then either $(N,h)=(3,2)$ or $(N,h)=(2,1)$. In both cases, projecting a generic point onto two cameras of disjoint point centers leaves two back-projected lines that meet exactly the original point. Then by the classical fiber dimension theorem, the dimension of the domain equals the dimension of the image. 

\textnormal{(3):} Finally, if $(N,h)=(3,1)$, then the image lies in $(\PP^1)^n$ and therefore if $n=2$, then the dimension cannot be 3. If $n\ge 3$ and all line centers meet in a point $X\in \PP^3$, let $Y\in \PP^3$ be a point away from all centers. The tuple of back-projected planes corresponding to the projection of $Y$ all contain the common line $\mathrm{span}\{X,Y\}$. By the fiber dimension theorem, the dimension of $\mathcal M_{\Ca}^{3,1}$ is less than 3. However, if three line centers do not meet in a common point, then three generic back-projected planes meet exactly in a point away from the three centers, implying that the dimension is 3. 
\end{proof}

\begin{proposition}\label{prop: dim L} Let $\gamma\in \{0,1,2,3\}$ and consider pairwise disjoint lines $L_j$ for $j=1,\ldots,\gamma$. If $\Lambda$ is the set of lines meeting each $L_j$, then
\begin{align}
    \dim \mathcal L_{\Ca}^{\Lambda,3,h}=4-\gamma,
\end{align}
if and only if we are in one of three cases, \textnormal{(1):} $h=3$, or \textnormal{(2):} $h=2$, $\gamma=0,1$ and there are two disjoint point centers, or \textnormal{(3):} $\gamma=2,3$.
\end{proposition}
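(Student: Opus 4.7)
The plan is to apply the fiber dimension theorem to $\Phi_{\Ca,1}^{3,h}\restriction_\Lambda$, combined with the classical fact that $\dim \Lambda = 4 - \gamma$ (since meeting each of the $\gamma$ pairwise disjoint lines $L_j$ imposes a codimension-$1$ Schubert condition on $\Gr(1,\PP^3)$). Concretely, I would compute generic fiber dimensions via back-projected planes: the preimage of an image tuple $(\ell_1,\ldots,\ell_n)$ under $\Phi_{\Ca,1}^{3,h}$ consists of lines lying in every back-projected plane $\Pi_{\ell_i}$. The task thus reduces in each case to counting such lines (generically) subject to the Schubert incidence conditions cutting out $\Lambda$.

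Case \textnormal{(1)} is immediate: by \Cref{le: N N}, $\mathcal L_{\Ca}^{\Lambda,3,3} \cong \Lambda$ via a linear isomorphism, so the dimension equals $4 - \gamma$. For case \textnormal{(2)}, i.e.\ $h = 2$ and $\gamma \in \{0,1\}$, the key step is that two cameras with distinct centers produce back-projected planes $\Pi_{\ell_1}, \Pi_{\ell_2}$ meeting generically in a single line of $\PP^3$; hence $\Phi_{\Ca,1}^{3,2}$ is birational onto its image, and restricting to $\Lambda$ preserves dimension. For the converse, if there are not two disjoint centers---i.e., all cameras share one center---the map factors through a single copy of $\Gr(1,\PP^2)$ of dimension $2$, which is strictly smaller than $4 - \gamma$ for $\gamma \in \{0,1\}$, so the two-disjoint-center hypothesis is indeed necessary.

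For case \textnormal{(3)}, a single camera already forces the image to have dimension $4 - \gamma$. When $\gamma = 2$, I would show $\wedge^2 C\restriction_\Lambda$ is birational onto its image: a generic image line $\ell$ has back-projected plane $\Pi_\ell$ meeting each $L_j$ in a single point $p_j$, and the unique line through $p_1, p_2$ in $\Pi_\ell$ is the only preimage in $\Lambda$, so the fibers are generically singletons. When $\gamma = 3$, $\Lambda$ is the $1$-dimensional opposite ruling of the smooth quadric $Q$ determined by $L_1, L_2, L_3$ (\Cref{ss: smooth quad}), and the image is $1$-dimensional unless $\wedge^2 C\restriction_\Lambda$ is constant; constancy would force all lines of $\Lambda$ to share a single back-projected plane through $c$, which is impossible since $\Lambda$ rules all of $Q$.

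The main obstacle I anticipate is case \textnormal{(3)} with $\gamma = 3$: \Cref{ex: L^3} established the degree-$2$ image under the stronger hypothesis that the center lies off $Q$, whereas here I want to conclude the image is $1$-dimensional under only the standing assumption that each center is off the three lines $L_j$. The geometric observation that no plane contains an entire ruling of $Q$ bridges this gap, but one must also verify that $\wedge^2 C$ is regular on a dense open subset of $\Lambda$ when $c \in Q \setminus (L_1 \cup L_2 \cup L_3)$; this holds because only finitely many lines of $\Lambda$ pass through any fixed point of $Q$.
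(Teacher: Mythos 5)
Your proof is correct and follows essentially the same route as the paper: case (1) via \Cref{le: N N}, case (2) via back-projected-plane fiber arguments together with the shared-center degeneracy for the necessity direction, and case (3) via a single camera, with the quadric-ruling contradiction for $\gamma=3$. The one step to tighten is in case (2): birationality of $\Phi_{\Ca,1}^{3,2}$ on all of $\Gr(1,\PP^3)$ does not by itself imply that restricting to $\Lambda$ preserves dimension---you should add that a generic line of $\Lambda$ avoids the locus where the two back-projected planes coincide (lines through a center or meeting the baseline of the two centers), which is a proper subvariety of $\Lambda$ since the centers avoid $L_1$; the paper handles this by exhibiting an explicit family of lines in $\Lambda$ with singleton fibers.
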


We recall that we always assume that the centers of $\Ca$ do not meet any $L_j$.

\begin{proof}$ $

\textnormal{(1):} For $h=3$, we are done by \Cref{le: N N}.

\textnormal{(2):} The case $h=2$ and $\gamma=0$ was dealt with in \cite{breiding2022line}, and a general formula for non-anchored line multiview varieties is provided in \cite{rydell2023triangulation}. If $h=2$ and $\gamma=1$ and all centers are equal, then all back-projected planes are equal and the dimension is at most 2. If there are two cameras with disjoint center, then take $H$ to be a generic plane that meets the first center; it does not meet the second. However, $H$ meets the anchoring line $L$ in a point. Take a generic line $L$ through that point inside $H$. The back-projected planes of the projection of $L$ meet exactly at that line. The fiber dimension theorem that the dimension is 3. 

\textnormal{(3):} If $\gamma=2,3$, it suffices to restrict to arrangement with one camera $C$. Any generic plane through its center $c$ the two anchoring lines $L_1,L_2$ in unique points; there is a unique line in the Schubert variety contained in this back-projected plane. Then the dimension is the variety is at least 2, and it cannot be more than 2. If $\gamma=3$, assume by contradiction that the dimension is $0$, i.e. the image of $\wedge^2 C$ consists of finitely many points. Then any line in the Schubert variety, one of the families of lines of a smooth quadric, would have to be contained in finitely many planes, which is not true. Then the dimension must be 1. 
\end{proof}


To get set-theoretic equations that describe point multiview varieties, we require more notation. Given $n$ cameras $C_1,\ldots,C_n$ of sizes $(h+1)\times (N+1)$, and $N\times 1$ vectors $x_1,\ldots,x_n$, we define
\begin{align}
    M_\Ca(x):=\begin{bmatrix}
        C_1 & x_1 & 0 & \cdots & 0\\
        C_2 & 0 & x_2 & \cdots & 0\\
        \vdots & \vdots & \vdots & \ddots & \vdots \\
        C_n & 0 & 0 & \cdots & x_n
    \end{bmatrix}. 
\end{align}
This $(h+1)n\times (N+1+n)$ matrix appears in many places in the literature \cite{agarwal2019ideals,kileel2022snapshot}. Any point multiview variety is set-theoretically described by the condition that $M_{\Ca}(x)$ is rank-deficient, assuming a certain genericity of centers \cite{li2018images}. In this work by Li, set-theoretic descriptions for point multiview varieties are further given for any camera arrangements. We specialize these results below to the point multiview varieties from \Cref{s: VV}.

First, consider the projection of points $C:\PP^3\dashrightarrow\PP^1$. Given an image point $x\in \PP^1$, the back-projected plane $H$ is the set of points that are projected by $C$ onto $x$ and it equals $\{X\in \PP^3: h^TX=0\}$, where 
\begin{align}
    h=C^T\begin{bmatrix}0 & 1 \\ -1 & 0 \end{bmatrix}x.
\end{align}
To see this, we first write $\widetilde{x}=\left[\begin{smallmatrix} 0 & 1\\ -1 & 0
        \end{smallmatrix}\right]x$. Then for a point $X\in \PP^3$, we have $x=CX$ if and only if $\widetilde{x}^TCX$, which is equivalent to $h^TX=0$.
        
\begin{theorem} $ $

    \begin{enumerate}
       \item[(P1)] Let $N\ge 0$ be a natural number. We have 
       \begin{align}
           \mathcal{M}_{\Ca}^{N,N}=\{x\in (\PP^N)^n \colon \rank \begin{bmatrix}C_1^{-1}x_1&\cdots & C_n^{-1}x_n\end{bmatrix}\le 1\},
       \end{align} 
    \item[(P2)] Let $N\ge 1$ be a natural number. We have 
    \begin{align}
        \mathcal{M}_{\Ca}^{N,N-1}=\{x\in (\PP^{N-1})^n \colon \rank M_{\Ca}(x) \le N+n  \},
    \end{align}
    if and only if the centers of $\Ca$ are pairwise distinct,
    \item[(P3)] Let $N\ge 0$ be a natural number. We have 
    \begin{align}
        \mathcal{M}_{\Ca}^{N,1}=\{x\in (\PP^{1})^n \colon \rank [C_1^T\widetilde{x}_1\; \cdots \; C_n^T\widetilde{x}_n]\le N \}.
    \end{align}
     if and only if for each $I\subseteq [n]$ with $c_I\neq \emptyset$, we have $N-|I|-1\ge \dim c_I$.
    \end{enumerate}
\end{theorem}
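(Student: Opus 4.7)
The plan is to derive each rank description by a direct kernel analysis, establishing the forward inclusion $\mathcal{M}_\Ca \subseteq \{x : \rank(\cdot) \le \cdots\}$ via an explicit preimage $X$, and then handling the reverse inclusion---where the genericity hypotheses on camera centers become essential---by specializing the framework of Li \cite{li2018images}. For (P1), since $h=N$ each camera $C_i$ is invertible, and a tuple $x \in (\PP^N)^n$ lies in $\mathcal{M}_\Ca^{N,N}$ if and only if there is an $X \in \PP^N$ with $C_i^{-1} x_i = X$ projectively for every $i$, which is exactly the rank-one condition on the matrix with columns $C_i^{-1} x_i$. No hypothesis on centers is needed here since each $C_i$ is a homography.

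For (P2), a nontrivial kernel vector $(X, -\lambda_1, \ldots, -\lambda_n)$ of $M_\Ca(x)$ encodes the triangulation system $C_i X = \lambda_i x_i$, giving the forward inclusion. For the reverse, each $x_i$ is projectively nonzero so $X = 0$ is impossible, and pairwise distinctness of centers implies that $X = c_i$ can hold for at most one index; the $\lambda_j$ with $j \neq i$ are then nonzero and yield genuine projections, while the remaining image is recovered as a limit in the Zariski closure. Necessity of the distinct-centers hypothesis follows by an explicit construction: if $c_i = c_j$, then taking $X = c_i = c_j$ produces a kernel vector with arbitrary $x_i, x_j$, yielding spurious components of the rank locus outside $\mathcal{M}_\Ca^{N,N-1}$.

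For (P3), the back-projected hyperplane of $x_i \in \PP^1$ through $C_i$ has defining equation $\widetilde{x}_i^T C_i X = 0$ with normal $C_i^T \widetilde{x}_i$, extending the $N=3$ computation sketched before the theorem. The rank inequality is therefore the condition that all $n$ back-projected hyperplanes share a common point $X \in \PP^N$, which projects to $x$ provided $X$ avoids every center. Sufficiency of the inequality $N - |I| - 1 \ge \dim c_I$ is that it forces any $|I|$ back-projected hyperplanes to meet in dimension strictly greater than $\dim c_I$, producing a common point outside every $c_I$; necessity is shown by exhibiting rank-deficient $x$ whose common back-projections are forced to lie inside some $c_I$. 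The main obstacle throughout (P2) and (P3) is precisely this boundary analysis---separating kernel vectors that give genuine triangulations from those where $X$ lands on a camera center and the projection is undefined; managing these limits via the Zariski closure is the technical core, and I would match our three settings to the case analysis of \cite{li2018images} rather than redoing it from scratch.
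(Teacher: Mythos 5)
Your proposal follows essentially the same route as the paper: (P1) via invertibility of the cameras, (P2) via the kernel analysis of $M_{\Ca}(x)$ together with the limit argument when the triangulated point hits a (single, by distinctness) center and the coincident-center construction for necessity, and (P3) via the normals $C_i^T\widetilde{x}_i$ of back-projected hyperplanes and the pseudo-disjointness characterization, deferring the hard closure analysis to the literature just as the paper does (it cites \cite{rydell2023triangulation}, Section 4, rather than \cite{li2018images}). One caveat: your one-line sufficiency heuristic for (P3)---that the inequality $N-|I|-1\ge \dim c_I$ lets one choose a common point of the back-projected hyperplanes outside every center---is not literally correct (the common locus can still be forced onto a center even under pseudo-disjointness), so the deferred limit/closure case analysis is genuinely needed there, exactly as in the works you and the paper cite.
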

\begin{proof} $ $

\textit{(P1)}. $\Phi_{\Ca,0}^{N,N}$ is a morphism and so its image equals $\mathcal M_{\Ca}^{N,N}$. Then $x\in (\PP^N)^n$ lies in $\mathcal M_{\Ca}^{N,N}$ if and only if there is an $X\in \PP^N$ such that $x_i=C_iX$ for each $i$, which can be translated as $C_i^{-1}x_i\in \PP^N$ all being equal. This happens precisely when $\rank \begin{bmatrix}C_1^{-1}x_1&\cdots & C_n^{-1}x_n\end{bmatrix}=1$.  

\textit{(P2)}. Under these assumptions, all centers are points. If two centers coincide, then the multiview variety satisfies the constraint that the corresponding back-projected lines corresponding to those two cameras are always the same, which is not captured by the rank-deficiency of $M_{\Ca}$. 

Now, assume that all centers are disjoint. Take $x\in (\PP^{N-1})^n$ such that $M_{\Ca}(x)$ is rank-deficient. Let $\underline{x}_i$ denote affine representatives of $x_i$ in $(\CC^{N})^n$. Then there is a vector $(\underline{X},\lambda_1,\ldots,\lambda_n)$ in the kernel of $M_{\Ca}(\underline{x})$, i.e. such that $C_i\underline{X}=\lambda_i\underline{x}_i$ holds in $\CC^{N}$ for each $i$. Let $X\in \PP^N$ denote the projectivization of $\underline{X}$. We are done if $C_iX\neq 0$ for each $i$, because then $x_i=C_iX$ holds in $\PP^{N-1}$ for each $i$. If $C_iX=0$ for some $i$, then $C_jX\neq 0$ for $j\neq i$, since all centers are pairwise distinct. This means that $x_j=C_jX$ for each $j\neq i$. Let $L$ be the back-projected line of $x_i$. Choose any sequence $X^{(a)}\to X$ inside $L$. Then by construction, $x_i=C_iX^{(a)}$ for each $a$. Further $x_j=\lim C_jX^{(a)}$ for each $j\neq i$. By Chevalley's theorem \cite[Theorem 3.16]{harris2013algebraic}, this shows that $x\in \mathcal M_{\Ca}^{N,N-1}$. For the other direction, we may similarly observe that any $x\in \mathrm{Im}\; \Phi_{\Ca,0}^{N,N-1}$ satisfies $\rank\; M_{\Ca}(x)\le N+n$.

\textit{(P3)}. By \cite[Section 4]{rydell2023triangulation}, $\mathcal M_{\Ca}^{N,1}$ is exactly the set of tuples of image points whose back-projected planes meet in a point if and only if the center arrangement of $\Ca$ is \textit{pseudo-disjoint}, which in turn is characterized by $N-|I|-1\ge \dim c_I$ for each $I\subseteq [n]$ with $c_i\neq \emptyset$. Since $C_i^T\left[\begin{smallmatrix} 0 & 1\\ -1 & 0
        \end{smallmatrix}\right]x$ defines the back-projected planes of $x$, the back-projected planes meet in at least a point precisely when $\begin{bmatrix}
           C_1^T\widetilde{x}_1 & \cdots & C_n^T\widetilde{x}_n 
        \end{bmatrix}$  is rank-deficient. 
\end{proof}

Finally, we address the case of line multiview varieties and anchored versions thereof.
For the sake of uniform notation, we identify $L^0$ with the family of Schubert varieties containing only $\Lambda=\Gr(1,\PP^3)$ in~\Cref{thm:line-anchored-equations} below.

\begin{remark}\label{remark:vanishing-ideals}
For sufficiently generic camera arrangements, our set-theoretic descriptions of non-anchored point and line multiview varieties  correspond to stronger ideal-theoretic statements in the literature---see eg.~\cite{agarwal2019ideals,agarwal2022atlas,breiding2023line}.
\end{remark}

We recall from \Cref{ss: MV}, that when mapping lines from $\PP^3\dashrightarrow\PP^2$, we send elements from $\Gr(1,\PP^3)$ to $(\PP^2)^\vee$, the dual of $\PP^2$. To be more precise, given a line spanned by $X_0,X_1$ in $\PP^3$ and a camera $C:\PP^3\to \PP^2$, the image of this line is $\ell=CX_0\times CX_1$. Note that the set of points $\{x\in \PP^2: \ell^Tx=0\}$ coincides with $\{CX: X\in \mathrm{span}\{X_0,X_1\}\}$.  

\begin{theorem}\label{thm:line-anchored-equations} $ $

    \begin{enumerate}
    \item[(L1)] Let $\gamma \in \{0,1,2,3\}$ and consider pairwise disjoint lines $L_j$ for $j=1,\ldots,\gamma $. If $\Lambda$ is the set of lines meeting each $L_j$, then 
    \begin{align}\begin{aligned}
        \mathcal{L}_{\Ca}^{\Lambda,3,3}=\{\ell\in \mathrm{Gr}(1,\PP^3)^n& \colon \rank\begin{bmatrix}
            (\wedge^2 C_1)^{-1}\ell_1&\cdots & (\wedge^2 C_n)^{-1}\ell_n
        \end{bmatrix}\le 1,\\
        &\;\:\:\textnormal{ and } (\wedge^2 C_1)^{-1}\ell_1 \textnormal{ meets each }L_j, j=1,\ldots,\gamma \}.
    \end{aligned}
    \end{align}
    \item[(L2)] Let $\gamma\in \{0,1,2,3\}$ and consider pairwise disjoint lines $L_j$ for $j=1,\ldots,\gamma$. Denote by $p_{j,1},p_{j,2}$ vectors defining hyperplanes whose intersection is $L_j$. If $\Lambda$ is the set of lines meeting each $L_j$, then 
    \begin{align}\begin{aligned}\label{eq: LCLi}
         \mathcal{L}_{\Ca}^{\Lambda,3,2}=\{\ell\in \mathrm{Gr}(1,\PP^2)^n& \colon \rank\begin{bmatrix}
            C_1^T\ell_1&\cdots & C_n^T\ell_n
        \end{bmatrix}\le 2\textnormal{ and },\\
        &\;\;\; \rank\begin{bmatrix}
            C_1^T\ell_1&\cdots & C_n^T\ell_n & p_{j,1} & p_{j,2}
        \end{bmatrix}\le 3,\\
        &\;\;\textnormal{ for each }j=1,\ldots,\gamma \},
    \end{aligned}
    \end{align}
    if and only if any line $E$ that meets each $L_j$ contains at most $3-\gamma$ centers, and all centers are pairwise distinct unless $\gamma=3$.
\end{enumerate}
\end{theorem}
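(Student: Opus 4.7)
For (L1), I would use that each $\wedge^{2}C_i$ is an invertible $6\times 6$ matrix restricting to an automorphism of $\Gr(1,\PP^3)\subset\PP^5$, as recorded in \Cref{ss: wedge}. Thus $\ell\in \mathcal L_{\Ca}^{\Lambda,3,3}$ if and only if the lines $(\wedge^{2}C_i)^{-1}\ell_i$ all coincide with a common $L\in \Lambda$. The coincidence is the rank-one condition on the displayed matrix, and membership in $\Lambda$ is the statement that this common line meets each $L_j$. Because $\wedge^{2}C_i$ is an isomorphism of the Grassmannian, the image of $\Phi_{\Ca,1}^{3,3}|_\Lambda$ is already closed, so no genericity of $\Ca$ is needed and no closure is taken.

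For (L2), the strategy is to work with back-projected planes. For $C:\PP^3\dashrightarrow\PP^2$ with center $c$, the pre-image plane of an image line $\ell\in \Gr(1,\PP^2)$ has defining vector $C^T\ell\in (\PP^3)^\vee$, and a line $L\not\ni c$ projects to $\ell$ iff $L$ lies in this plane $H_\ell$. The first step is to translate the rank conditions geometrically: $\rank[C_1^T\ell_1\cdots C_n^T\ell_n]\le 2$ is equivalent to $\dim \bigcap_i H_{\ell_i}\ge 1$, and each enlarged rank-$\le 3$ condition with $(p_{j,1},p_{j,2})$ is equivalent to $\bigcap_i H_{\ell_i}\cap L_j\ne \emptyset$. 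The forward containment is then direct: any preimage $L\in \Lambda$ sits in every $H_{\ell_i}$ and meets every $L_j$, so the rank conditions hold on $\mathcal L_{\Ca}^{\Lambda,3,2}$.

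The reverse containment is the substantive part, and this is where the centers hypotheses enter. Given $\ell$ satisfying the rank conditions, the intersection $\bigcap_i H_{\ell_i}$ contains a line, and the rank-$\le 3$ conditions guarantee that some line in this intersection meets each $L_j$, yielding a nonempty set $S$ of candidate preimages inside $\Lambda$. I would then show that $S$ contains a line distinct from every $c_i$, so that $C_i\cdot L = \ell_i$ and $\ell$ is genuinely in the image of $\Phi_{\Ca,1}^{3,2}|_\Lambda$. The bound ``at most $3-\gamma$ centers on any line of $\Lambda$'' is precisely what allows a parameter count showing that $S$ is not entirely swept out by the centers, and the distinct-centers assumption (when $\gamma<3$) eliminates the pathology where two of the $H_{\ell_i}$ coincide unconditionally. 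When every candidate line in $S$ does pass through some center, I would deform within $\bigcap_i H_{\ell_i}\cap \Lambda$ and appeal to Chevalley's theorem, in the spirit of the proof of (P2), to place $\ell$ in the Zariski closure.

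The main obstacle is this reverse direction when $\gamma\in \{2,3\}$, so that $\Lambda$ is a $2$-plane or a degree-two curve rather than a linear space, and constructing the required deformation inside a curved $\Lambda$ takes care. For the ``only if'' direction I would produce explicit counterexamples: a line $E\in \Lambda$ containing $3-\gamma+1$ centers yields an image tuple that satisfies the rank conditions but does not come from any line of $\Lambda$; and two coinciding centers $c_i=c_{i'}$ (for $\gamma<3$) impose the extra equation $H_{\ell_i}=H_{\ell_{i'}}$ on the multiview variety that is strictly stronger than the rank-$\le 2$ condition.
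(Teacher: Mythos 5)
Your treatment of (L1) and of the forward inclusion in (L2) matches the paper and is fine. The genuine gap is in the reverse inclusion of (L2), precisely in the case you defer to a ``deformation within $\bigcap_i H_{\ell_i}\cap \Lambda$.'' When the common line $E\in\Lambda$ contained in all back-projected planes passes through some centers $c_i$, $i\in I$, that intersection is typically just $\{E\}$ (already two distinct planes $H_{\ell_i}$ intersect exactly in $E$), so your candidate set $S$ is a single point of the Grassmannian: there is nothing to deform within, and no parameter count using the ``at most $3-\gamma$ centers'' hypothesis can produce a center-avoiding line in $S$. Any approximating family $L^{(a)}\to E$ must leave the given back-projected planes, and the real difficulty is to arrange simultaneously that $\mathrm{span}\{c_i,L^{(a)}\}\to H_{\ell_i}$ for every $i\in I$ while $C_j\cdot L^{(a)}\to\ell_j$ for $j\notin I$. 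The paper does this by choosing a line $E^*\in\Lambda$ disjoint from $E$, setting $F_i:=H_{\ell_i}\cap\mathrm{span}\{c_i,E^*\}$, observing that $|I|+\gamma\le 3$ so the lines $F_i$ and $L_j$ fit on a smooth quadric, and taking $L^{(a)}$ in the complementary ruling converging to $E$; since each $\mathrm{span}\{c_i,L^{(a)}\}$ contains $F_i$, its limit is $H_{\ell_i}$. This quadric construction is the missing idea, and it is also where the hypothesis on the number of centers per line of $\Lambda$ is actually used---not where you place it.

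Your ``only if'' direction is also incomplete as sketched: producing a tuple that satisfies the rank conditions but ``does not come from any line of $\Lambda$'' does not exclude it from $\mathcal L_{\Ca}^{\Lambda,3,2}$, which is the Zariski \emph{closure} of the image. The paper instead argues by dimension: if some $E\in\Lambda$ contains $|I|\ge 4-\gamma$ centers, the locus $\mathcal B(E)$ of tuples satisfying the equations whose back-projected planes all contain $E$ has dimension at least $|I|\ge 4-\gamma=\dim\mathcal L_{\Ca}^{\Lambda,3,2}$ by \Cref{prop: dim L}, yet it is a proper subvariety, contradicting irreducibility if the asserted equality held; you would need this (or an equivalent closure argument), not just non-membership in the image. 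Your coinciding-centers counterexample is fine, since $\ell_i=\ell_{i'}$ is a closed condition and therefore persists on the closure, which is essentially the paper's argument.
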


\begin{proof} $ $ 

\textit{(L1)}. In this case, the projection map is a morphism, and $\mathrm{Im}\;\Phi_{\Ca,1}^{3,3}\restriction_{\Lambda}$ equals $\mathcal{L}_{\Ca}^{\Lambda,3,3}$. Moreover, $\wedge^2C_i$ are invertible maps, and $\Phi_{\Ca,1}^{3,3}$ is injective. Then $(\wedge^2 C_i)^{-1}$ takes the image lines $\ell=(\ell_1,\ldots,\ell_n)\in \mathcal{L}_{\Ca}^{\Lambda,3,3}$ to original line $L$ that is projection onto each $\ell_i$. Then $\ell \in \mathcal{L}_{\Ca}^{\Lambda,3,3}$ if and only if the lines $(\wedge^2 C_i)^{-1}\ell_i$ are projectively the same, say equal to $L$, and meet each of the lines $L_j$, meaning $L\in \Lambda$. 

\textit{(L2)}. $ $

$\Rightarrow)$ Assume first that \eqref{eq: LCLi} holds. Let $\gamma<3$. If two centers $c_1,c_2$ coincide, then consider any line $L$ through $c_1=c_2$ and each $L_j$. Any tuple $\ell=(\ell_1,\ldots,\ell_n)$ whose back-projected planes contain $L$. Then by assumption, $\ell\in  \mathcal{L}_{\Ca}^{\Lambda,3,2}$. However, in the image of $\Phi_{\Ca,1}^{3,2}\restriction_{\Lambda}$, it is clear that $\ell_1=\ell_2$. This is a contradiction, and we conclude $c_1\neq c_2$.


Now assume that $E$ meets each $L_j$ and contains $4-\gamma$ or more centers, say $c_i$ for $i\in I$ for some set of indices $I\subseteq[n]$ with $|I|\ge 4-\gamma$. Then consider the subvariety 
\begin{align}\begin{aligned}
    \mathcal B(E):=&\{\ell \colon \ell \textnormal{ satisfies the equations of \eqref{eq: LCLi}}, \\
    &\;\;\;\;\;\; E\subseteq H_i \textnormal{ for the back-projected planes }H_i \textnormal{ of }\ell_i\}
\end{aligned}
\end{align}
of \eqref{eq: LCLi}. For each $i\in I$, there is a 1-dimensional degree of freedom in the choice of back-projected planes $H_i$ through $c_i$ and therefore of $\ell_i$ inside $\mathcal B(E)$. The variety $\mathcal B(E)$ is an irreducible and proper subvariety of $\mathcal{L}_{\Ca}^{\Lambda,3,2}$, at least of dimension $|I|$. We now arrive at a contradiction, because the multiview variety is irreducible and by the above and \Cref{prop: dim L}, $\dim \mathcal{L}_{\Ca}^{\Lambda,3,2}=4-\gamma\le |I|\le \dim \mathcal B(E)$.

$\Leftarrow)$ The inclusion $\subseteq$ holds in \eqref{eq: LCLi}, because it holds for the image of the projection map $\Phi_{\Ca,1}^{3,2}\restriction_{\Lambda}$. We show the other inclusion $\supseteq$. Note that any $\ell$ satisfying the equations of \eqref{eq: LCLi} has that its back-projected planes all contain a line $E\in \Lambda$. Then it is enough to prove that
\begin{align}\begin{aligned}
    \mathcal B(E)\subseteq \mathcal{L}_{\Ca}^{\Lambda,3,2} \text{ for any line } E\in \Lambda.
\end{aligned}
\end{align}
We consider two different cases: 1) $E$ meets no centers, 2) $E$ meets one or more centers. \phantom\qedhere

\begin{enumerate}
    \item[Case 1:] $\ell$ is the image of $E$, showing that $\ell$ lies in $ \mathcal{L}_{\Ca}^{\Lambda,3,2}$.

    \item[Case 2:] Let $I\subseteq [n]$ be the indices of centers that meet $E$. Since $|I|>0$, we conclude $\gamma <3$, because by assumption; $3-\gamma \ge |I|$. Consider a generic tuple $\ell\in \mathcal B(E)$, and let $H_i$ be the corresponding back-projected planes. Fix some line $E^*$ disjoint from $E$ that meets each $L_j$. By genericity of $H_i$ through $E$, $F_i:=H_i\cap \mathrm{span}\{c_i, E^*\}$ are lines for $i\in I$. By assumption, $|I|+\gamma\le 3$ and the number of $F_i$ and $L_j$ together is less than 3. Each $F_i,i\in I$ and $L_j$ meets $E$ and $E^*$. Then, following \cite[Section 2]{breiding2022line}\cite[Section 5]{breiding2023line}, there is a sequence of lines $L^{(a)}\to E$ in a smooth quadric containing each $F_i$ and $L_j$ meeting no centers and such that $H_i=\lim \mathrm{span}\{c_i,L^{(a)}\}$ for each $i\in [n]$, showing $\ell \in \mathcal{L}_{\Ca}^{\Lambda,3,2}$.\pushQED{\qed}  \qedhere
\popQED
\end{enumerate}
\end{proof}





\section{Multidegrees}\label{s: Multdeg}


Denote by~$L_{d}\subseteq \mathbb P^{h}$ a general linear subspace of codimension $d$, meaning dimension $h-d$. The \textit{multidegree} of a variety $\mathcal{X}\subseteq \PP^{h_1}\times \cdots \times\PP^{h_m}$ is the function 
\begin{align}
    D(d_{1},\dots,d_{m}):= \#( \mathcal{X}\cap (L_{d_{1}}^{(1)}\times \cdots\times L_{d_{m}}^{(m)})), 
\end{align}
for $(d_{1},\dots,d_{m})\in \mathbb N^{n}$ such that $d_{1}+\cdots +d_{m} = \operatorname{dim} \mathcal{X}$ and $d_i\le h_i$. The multidegree is a natural property to study; it measures how non-linear a variety is. From the point of view of computer vision, it tells us what information we need in order to uniquely reconstruct world objects. For instance, $\mathcal L_n^{3,2}$ has $D(2,1,1,0,\ldots,0)=1$, which means that given an image line $\ell$ in one image plane, and two points $x',x''$ in two other image planes, there is a unique reconstruction of a world line $L$ that projects onto $\ell$ in the first image plane and whose projection onto the other image planes contains the points $x',x''$, respectively. 

For (anchored) multiview varieties with generic centers, the function $D$ is symmetric. This implies that for any permutation $\sigma \in S_n$, $D(d_{1},\dots,d_{m})$ is equal to $D(d_{\sigma{(1)}},\dots,d_{\sigma{(m)}})$. If the multidegree of a variety is constant and equals $d$, we write $D(\sigma)=d$. 

\begin{proposition}\label{prop: multideg} The different possible values of the multidegree function of the fourteen (anchored) multiview varieties appearing in \Cref{thm: ED-cat} for generic cameras are as follows.
\begin{enumerate}
    \item $\mathcal{M}^{N,h}$ \textnormal{:} $D(\sigma)=1$; 
    \item $\mathcal{L}^{L^\gamma,3,3}$ \textnormal{:} $D(\sigma)=2$;
    \item $\mathcal{L}^{3,2}$ \textnormal{:} $D(2,2,\ldots;0)=1,D(2,1,1,0,\ldots,0)=1,D(1,1,1,1,0,\ldots,0)=2$;
    \item $\mathcal L^{L,3,2}$ \textnormal{:} $D(2,1,\ldots,0)=1,D(1,1,1,0,\ldots,0)=2$;
     \item $\mathcal L^{L^2,3,2}$ \textnormal{:} $D(2,0,\ldots,0)=1,D(1,1,0,\ldots,0)=2$;
      \item $\mathcal L^{L^3,3,2}$ \textnormal{:} $D(1,0,\ldots,0)=2$.
    \end{enumerate}
\end{proposition}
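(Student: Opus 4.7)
The plan is to interpret each multidegree as the number of preimages of a product of generic linear subspaces under the parametrizing map, reducing everything to classical enumerative counts in $\PP^N$ or on the Grassmannian $\Gr(1,\PP^3)$ via \Cref{le: N N}.

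For the six point families $\mathcal{M}^{N,h}$ of item (1), the parametrization $\Phi_{\Ca,0}^{N,h}\colon \PP^N \dashrightarrow (\PP^h)^n$ is birational onto its image, and the preimage under the $i$-th projection $X \mapsto C_i X$ of a generic codim-$d_i$ linear subspace of $\PP^h$ is a codim-$d_i$ linear subspace of $\PP^N$. Since $\sum d_i = N = \dim \mathcal{M}^{N,h}$, these $n$ generic linear conditions cut out a single reduced point of $\PP^N$, giving $D(d_1,\ldots,d_n)=1$.

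For the line anchored families $\mathcal{L}^{L^\gamma,3,3}$ of item (2) (including $\gamma = 0$), \Cref{le: N N} yields that each such variety is linearly isomorphic to its Schubert variety $\Lambda$ inside $\PP^5$. For $\gamma \in \{0,1,2\}$, $\Lambda$ is the Klein quadric or a generic linear section thereof, hence of degree $2$; for $\gamma = 3$, \Cref{ex: L^3} shows $\Lambda$ is a degree-$2$ curve in $\PP^5$. Linear isomorphism to each factor means codim-$d_i$ conditions in the product pull back to generic codim-$d_i$ linear conditions on $\Lambda$; when $\sum d_i = \dim \Lambda$ the joint condition cuts $\Lambda$ by a generic linear subspace of complementary codimension, producing $\deg \Lambda = 2$ points.

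For the families $\mathcal{L}^{3,2}$, $\mathcal{L}^{L,3,2}$, $\mathcal{L}^{L^2,3,2}$, $\mathcal{L}^{L^3,3,2}$ of items (3)--(6), the plan is to translate codimension conditions in $\Gr(1,\PP^2) \cong (\PP^2)^\vee$ geometrically. A codim-$2$ subspace singles out a specific image line $\ell_i$, which is equivalent to requiring the world line $L$ to lie in the back-projected plane of $\ell_i$; a codim-$1$ subspace corresponds to image lines through a fixed point $x_i$, equivalent to requiring $L$ to meet the back-projected line of $x_i$; and each of the $\gamma$ anchoring lines imposes the condition that $L$ meet it. Each entry of the multidegree table then reduces to a classical transversal count in $\PP^3$: the unique line in a plane through a specified point (for the $(2,1,\ldots)$ entries), the unique line cut out as the intersection of two planes (for $(2,2,0,\ldots)$), and the fact that $4$ generic lines in $\PP^3$ admit exactly $2$ common transversals (for $(1,1,1,1,0,\ldots)$ and the lower analogues). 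For the $\gamma = 3$ case, the single-factor image $\wedge^2 C_i(\Lambda)$ is a degree-$2$ conic by \Cref{ex: L^3}, so a generic codim-$1$ line in $(\PP^2)^\vee$ meets it in $2$ points, giving $D(1,0,\ldots,0) = 2$.

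The main obstacle is ensuring transversality and verifying that no spurious intersections arise from the boundary of the image of the rational parametrization---in particular, that intersection points correspond to honest reconstructions described by \Cref{s: Set-Theoretic} rather than to artifacts of taking closures. A Bertini-type argument for generic linear sections, combined with the explicit back-projected plane and line descriptions and with the irreducibility of the Schubert conditions, should close this gap for generic cameras and confirm that each count is transverse with multiplicity one.
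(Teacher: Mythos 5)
Your proposal is correct and follows essentially the same route as the paper: for the $h=3$ line families, use the factorwise linear isomorphism (via $(\wedge^2 C_i)^{-1}$, i.e.\ \Cref{le: N N}) to pull the linear conditions back to $\Lambda\subseteq\PP^5$ so that every multidegree equals $\deg\Lambda$, and for the point families and the $h=2$ line families, translate the codimension conditions into back-projected planes/lines and invoke the classical transversal counts (this is exactly the argument of \cite[Section 4]{breiding2022line} that the paper cites and extends). The one place your justification diverges, and is slightly imprecise, is the claim that for $\gamma=1,2$ the variety $\Lambda$ is a \emph{generic} linear section of the Klein quadric: the Schubert hyperplanes $H_{L_j}$ are tangent hyperplanes of $\Gr(1,\PP^3)$, so genericity fails and one must separately rule out a degree drop (e.g.\ the section becoming a double hyperplane set-theoretically). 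The paper closes this by a two-sided argument: $\Lambda$ is cut out by the Pl\"ucker quadric and $\gamma$ linear forms, hence has degree at most $2$, and it has degree at least $2$ because $\Gr(1,\PP^3)\cap H_{L_1}\cap\cdots\cap H_{L_4}$ consists of the two transversals to four pairwise skew lines, and replacing some $H_{L_i}$ by generic hyperplanes preserves the count. Your conclusion $\deg\Lambda=2$ is nonetheless correct in all cases (and for $\gamma=3$ your appeal to \Cref{ex: L^3} matches the paper), and your closing remarks about transversality and boundary points of the parametrization are precisely the issues the paper resolves by citing the known multidegree results for point multiview varieties and the argument of \cite{breiding2022line}.
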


For point multiview varieties, a general formula for the multidegrees is found in \cite{li2018images}. 

\begin{proof} The argument provided for the multidegrees of $\mathcal L_n^{3,2}$ in \cite[Section 4]{breiding2022line} extend directly for both the point multiview varieties and all line multiview varieties with $h=2$. 

In the case of line multiview varieties with $h=3$, note that each such variety lies in $(\PP^5)^n$, where $n$ is the number of cameras. Let $\Lambda$ be the set of lines through each $L_j,j=1,\ldots,\gamma$. Taking a generic hyperplane of $\PP^5$ in image $i$ corresponds to a generic hyperplane in the domain via the inverse mapping $(\wedge^2C_i)^{-1}$. Then, for the calculation of multidegrees, we wish to determine the cardinality of $\Lambda$ intersected with $4-\gamma$ generic hyperplanes of $\PP^5$. By definition, each multidegree is then the degree of $\Lambda$. 

Now, $\Lambda$ is cut out by one degree-2 equation, namely the defining equation of $\Gr(1,\PP^3)$, and $\gamma$ linear equations. Therefore it is at most of degree 2. More precisely, the set of lines $\Lambda(L)$ through a fixed line $L$ is the intersection of $\Gr(1,\PP^3)$ with a hyperplane $H_L$. Let $L_{\gamma+1},\ldots,L_4$ be lines in $\PP^3$ such that $L_1,L_2,L_3,L_4$ are pairwise disjoint. Then 
\begin{align}\begin{aligned}
    &\deg \Big(\Lambda(L_1)\cap \Lambda(L_2)\cap \Lambda(L_3)\cap \Lambda(L_4)\Big)\\
    =&\deg \Big(\Gr(1,\PP^3)\cap H_{L_1}\cap H_{L_2}\cap H_{L_3}\cap H_{L_4}\Big)\\
    =&\, 2.
\end{aligned}
\end{align}
Then replacing some of $H_{L_i}$ by generic hyperplanes in $\PP^5$, we get the same degree. This proves that $\deg(\Lambda)=2$ for each $\gamma$. 
\end{proof}



\section{Resectioning Varieties}\label{s:resectioning}

Recall from the introduction that the study of multiview varieties is motivated by a classical problem of metric algebraic geometry known as \textit{triangulation}---given $n$ cameras $\PP^N \dashrightarrow \PP^h,$ recover the scene point in $\PP^N$ of best fit to $n$ given projections in $\PP^h.$
In this section, we consider the ``dual problem" of \textit{resectioning}---given $n$ scene points in $\PP^N$, recover the camera of best fit to $n$ given projections in $\PP^h.$
Variants of this problem play a key role in applications such as visual localization~\cite{sattler2018benchmarking}.
The natural analogues of point multiview varieties for the resectioning problem have been studied in two recent works in the special case $(N,h)=(3,2)$~\cite{agarwal2022atlas,resectUW}.
Here we summarize the general situation.

Fix integers $h \le N$ and $n,$ and a configuration of points $\mathbf{X} = (X_1, \ldots , X_n) \in \left( \PP^N \right)^n$.
We assume the points $\mathbf{X}$ are in \textit{linearly general position}---that is, no subset of $(N+1)$ points $X_{i_0}, \ldots , X_{i_N}$ lie in a common hyperplane.
We consider an analogue of the map~\eqref{eq: jointmap}:
\begin{align}\label{eq:jointmap-resectioning}\begin{aligned}
\Psi_{\mathbf{X}}^{N,h} : \PP (\CC^{(h+1)\times (N+1)}) &\dashrightarrow (\PP^h)^n, \\
C &\mapsto (C X_1, \ldots , C X_n).    
\end{aligned}
\end{align}
\begin{definition}\label{def:resectioning-variety}
The \textit{resectioning variety} $\mathcal{R}_{\mathbf{X}}^{N,h}$ associated to the point arrangement $\mathbf{X}$ is the closed image of the rational map~\eqref{eq:jointmap-resectioning}.
\end{definition}
\begin{remark}
The rational map~\eqref{eq:jointmap-resectioning} makes sense for an arbitrary point arrangement $\mathbf{X}.$
Our assumption of linear general position on $\mathbf{X}$ ensures that~\Cref{def:resectioning-variety} is equivalent to the definition of $\mathcal{R}_{\mathbf{X}}^{N,h}$ used in previous papers~\cite[Proposition 7]{resectUW}. It also implies that $\mathcal{R}_{\mathbf{X}}^{N,h}$ is a proper subvariety of $(\PP^h)^n $ for\phantom\qedhere
\[\pushQED{\qed} 
n \ge 
\left\lfloor \displaystyle\frac{Nh + N + h}{h} \right\rfloor
+1
=
N+2 + \left\lfloor \displaystyle\frac{N}{h} \right\rfloor.\qedhere
\popQED
\] 
\end{remark}

Resectioning varieties form a special class of higher-dimensional point multiview varieties.
To see this, we may associate to each point $X_i$ of $\mathbf{X}$ the \textit{dual camera}
\begin{equation}\label{eq:dual-camera}
C_{X_i} := 
X_i^T \otimes I_{N+1}
\in
\PP(\CC^{(h+1) \times (Nh + N + h+1)})
,
\end{equation}
where $I_{\bullet }$ denotes an identity matrix and $\otimes $ is the usual Kronecker product of matrices.
For the associated camera arrangement, write $\Ca_\mathbf{X} = (C_{X_1}, \ldots, C_{X_n}).$
A camera matrix $C \in \PP(\CC^{(h+1)\times (N+1)})$ may be vectorized by writing its entries in row-major order. Let $X_{C}$ denote the result of this vectorization operation.
From the identity $C X_i = C_{X_i} X_{C}$, we have
\begin{equation}\label{eq:resect-is-multiview}
\mathcal{R}_{\mathbf{X}}^{N,h} = \mathcal{M}_{\Ca_\mathbf{X}}^{Nh+N+h,h}.
\end{equation}

\begin{example}\label{ex:segre}
When $(N,h)=(1,1)$, we obtain the dual camera $C_X$ from a world point $X$,
\begin{align}
    X = \begin{bmatrix}
U_1\\
V_1
\end{bmatrix}
\in \PP^1 \quad \Rightarrow \quad 
C_X = \begin{bmatrix}U_1 & V_1 & 0 & 0 \\
0 & 0 & U_1 & V_1 \end{bmatrix} = \PP (\CC^{2\times 4}).
\end{align}
For $\mathbf{X}=(X_1,X_2,X_3 ,X_4)$ in linear general position, the resectioning variety $\mathcal{R}_{\mathbf{X}}^{1,1} \subseteq (\PP^1)^4$
is the hypersurface defined by the quadrilinear form
\begin{align}
\det 
\begin{bmatrix}
C_{X_1} & x_1 & & & \\
C_{X_2} &  & x_2 & & \\
C_{X_3} &  & & x_3 & \\
C_{X_4} &  & & & x_4 
\end{bmatrix} = 0.
\end{align}
To obtain a dual world point $X_C$ from a camera $C$, we proceed similarly:
\begin{align}
C = \begin{bmatrix}
    c_{1,1} & c_{1,2}\\
    c_{2,1} & c_{2,2}
\end{bmatrix}
\quad \Rightarrow \quad 
X_C = \begin{bmatrix}
c_{1,1} &
c_{1,2} &
c_{2,1} &
c_{2,2}
\end{bmatrix}^T.
\end{align}
Noting for any $X\in \PP^1$ as above that
\begin{align}
\PP (\ker (C_X)) = \{ s \begin{bmatrix}
V_1 & -U_1 & 0 & 0
\end{bmatrix}^T + 
t \begin{bmatrix}
0 & 0 & V_1 & -U_1
\end{bmatrix}^T
\colon [s:t] \in \PP^1 \}, 
\end{align}
we see that all centers of any arbitrary arrangement of the form $\mathbf{C}_{\mathbf{X}}$ must lie on the set
\begin{align}
\displaystyle\bigcup_{X \in \PP^1} \{ C \in \PP (\CC^{2\times 2}) \colon X_C \in \ker (C_X) \} 
=
\{ C \in \PP (\mathbb{C}^{2\times 2}) \colon \rank C = 1 \},
\end{align}
the smooth Segre quadric in $\PP^3.$
This is not the case for generic $\PP^3 \to \PP^1$ cameras.
\end{example}

The resectioning problem is, of course, very interesting to study in the more general setting of line or anchored multiview varieties.
However, it is not evident whether or not we can realize more general resectioning varieties as multiview varieties in the same vein as~\eqref{eq:resect-is-multiview}.
For this reason, we focus solely on the case of points, but suggest the study of line or anchored resectioning varieties as directions worthy of future research.

Adapting the program of the previous sections, it is perhaps little surprise that there are no ED-equivalences between the (point) resectioning varieties, and that their multidegrees and set-theoretic equations are inherited from the multiview varieties.

We also point out the following ideal-theoretic result, which was originally stated for the special case $(N,h) = (3,2).$ 
The proof applies verbatim to the general case.

\begin{proposition}
[{\hspace{1sp}\cite[Theorem 6]{resectUW}}]\label{thm:resectioning-vanishing-ideal}
For $\mathbf{X}$ in linear general position, the vanishing ideal of $\mathcal{R}_{\mathbf{X}}^{N,h}$ is generated by all $k$-linear forms obtained as maximal minors of the matrices  
\begin{align}
\begin{bmatrix}
C_{X_1} & x_{i_1} & \\
\vdots & & \ddots \\
C_{X_{i_k}} & & &  x_{i_k}
\end{bmatrix},
\end{align}
where $N+2 + \left\lfloor \frac{N}{h} \right\rfloor  \le k \le (N+1)(h+1)$ and $\{ i_1, \ldots , i_k \}$ ranges over all $k$-element subsets of $[n]$.
Moreover, these generators form a universal Gr\"{o}bner basis.
\end{proposition}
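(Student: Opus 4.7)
The plan is to use the identification $\mathcal{R}_{\mathbf{X}}^{N,h} = \mathcal{M}_{\Ca_\mathbf{X}}^{Nh+N+h,h}$ recorded in~\eqref{eq:resect-is-multiview} and adapt the Gr\"{o}bner basis argument of~\cite[Theorem 6]{resectUW}. First I would verify that each proposed maximal minor vanishes on $\mathcal{R}_{\mathbf{X}}^{N,h}$. For any camera $C$ in the source of $\Psi_{\mathbf{X}}^{N,h}$, the identity $CX_i = C_{X_i} X_C$ implies that for every $k$-subset $\{i_1,\ldots,i_k\}\subseteq [n]$ the vector $\begin{bmatrix}X_C^{T} & -1 & \cdots & -1\end{bmatrix}^{T}$, with the $-1$'s in the columns of the $x_{i_j}$ after rescaling each $x_{i_j}=CX_{i_j}$, lies in the kernel of the displayed block matrix. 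The lower bound $k\ge N+2+\lfloor N/h\rfloor$ is precisely what makes this matrix tall enough that its maximal minors are genuine polynomial relations, which must therefore vanish on the image of $\Psi_{\mathbf{X}}^{N,h}$ and hence on $\mathcal{R}_{\mathbf{X}}^{N,h}$.

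For the reverse inclusion and the universal Gr\"{o}bner basis claim, I would fix an arbitrary monomial order $\prec$ on $\CC[(\PP^h)^n]$ and let $J_{\mathbf{X}}$ denote the ideal generated by the proposed minors. The strategy is to show that $\mathrm{in}_\prec(J_{\mathbf{X}})=\mathrm{in}_\prec(I(\mathcal{R}_{\mathbf{X}}^{N,h}))$ by matching Hilbert series; since the containment $J_{\mathbf{X}}\subseteq I(\mathcal{R}_{\mathbf{X}}^{N,h})$ is already in hand, this forces equality of the ideals and simultaneously gives the universal Gr\"{o}bner basis statement, because $\prec$ was arbitrary. The Hilbert series of $\CC[(\PP^h)^n]/I(\mathcal{R}_{\mathbf{X}}^{N,h})$ is computed from the parametrization $\Psi_{\mathbf{X}}^{N,h}$, while the initial ideal of $J_{\mathbf{X}}$ admits a combinatorial description in terms of $k$-subsets of $[n]$ and row selections from the stacked block matrix, paralleling the Aholt--Sturmfels--Thomas picture for classical point multiview varieties.

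The hardest part, and the place where one must deviate from a direct appeal to~\cite{agarwal2019ideals,li2018images}, is that the camera arrangement $\Ca_\mathbf{X}$ is \emph{never} generic: as~\Cref{ex:segre} already illustrates for $(N,h)=(1,1)$, the dual camera centers are forced to lie on a Segre-type subvariety, so the usual genericity hypotheses on multiview arrangements fail. The remedy is to exploit the explicit Kronecker product structure of $C_{X_i}$ together with the linear general position of $\mathbf{X}$: these features imply that any stacked block matrix indexed by at least $N+2+\lfloor N/h\rfloor$ points of $\mathbf{X}$ is rank-generic in the precise sense needed to run the straightening and Hilbert series computation. Verifying this rank-genericity is the technical crux, and it is exactly where the proof of~\cite[Theorem 6]{resectUW} for $(N,h)=(3,2)$ concentrates its work; because that argument is intrinsic to the Kronecker factorization and to linear general position, it carries over verbatim to arbitrary $(N,h)$ with $h\le N$.
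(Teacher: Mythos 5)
Your proposal matches the paper's treatment: the paper gives no new argument for this statement, saying only that the proof of \cite[Theorem 6]{resectUW} for $(N,h)=(3,2)$ ``applies verbatim to the general case,'' which is exactly the appeal you make after your correct verification of the easy containment (your observation that the lower bound $k\ge N+2+\left\lfloor N/h\right\rfloor$ is precisely the threshold at which the stacked matrix has at least as many rows as columns, so the kernel vector $(X_C,-1,\dots,-1)$ forces all maximal minors to vanish on the image of $\Psi_{\mathbf{X}}^{N,h}$, is accurate). Your middle paragraph on matching initial ideals via Hilbert series is a plausible reconstruction of the cited Gr\"obner-basis argument but plays no independent role beyond that deferral, so your route and the paper's are essentially the same.
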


Just as for multiview varieties, we write $\mathcal{R}_n^{N,h}$ in place of $\mathcal{R}_{\mathbf{X}}^{N,h}$ when it is understood that the point arrangement is sufficiently generic.
In view of~\eqref{eq:resect-is-multiview} and our census of ED degrees for point multiview varieties, it might seem reasonable to conjecture that $\mathrm{EDD} (\mathcal{R}^{N,h}_n)$ is a polynomial of degree $Nh + N+ h$ in $n.$
However, computational experiments suggest a polynomial of much lower degree.
\begin{conjecture}\label{conj:lower-degree}
$\mathrm{EDD} (\mathcal{R}^{N,h}_n)$ equals a degree-$N$ polynomial in $n$ for all $n\ge N+2 + \left\lfloor \frac{N}{h} \right\rfloor.$
\end{conjecture}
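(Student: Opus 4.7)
The plan is to parametrize $\mathcal{R}^{N,h}_\mathbf{X}$ by the camera space $\PP(\CC^{(h+1)(N+1)})$ via the map $\sigma\colon [C] \mapsto ([CX_1], \ldots, [CX_n])$. For $\mathbf{X}$ in linearly general position and $n \ge N+2+\lfloor N/h\rfloor$, $\sigma$ is birational onto its image, so $\mathrm{EDD}(\mathcal R_\mathbf{X}^{N,h})$ equals the number of complex critical points $[C]$ of the pulled-back squared-distance function
\[
f_u([C]) = \sum_{i=1}^n \big\lVert u_i - (CX_i)_{1:h}/(CX_i)_0 \big\rVert^2
\]
for generic $u \in (\RR^h)^n$. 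The task thus reduces to counting solutions of a square polynomial system in the $Nh+N+h$ affine coordinates on camera space.

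After clearing the denominators $(CX_i)_0$, the equations $\nabla_C f_u = 0$ decompose as a sum over $i=1,\ldots,n$ of contributions whose $C$-degrees are bounded independently of $n$ (each summand is quadratic in $C$ up to a common factor of $\prod_{j\neq i}(CX_j)_0^2$). I would then use the polar-degree description of the EDD from \cite{draisma2016euclidean} to rewrite the critical-point count as a sum of intersection numbers on the camera space, indexed by multisubsets of $\{1,\ldots,n\}$ recording which data points contribute to each polar class. The crucial step is to show that only multisubsets of cardinality at most $N$ contribute nontrivially; equivalently, that the polar classes of $\mathcal R^{N,h}_\mathbf{X}$ of codimension greater than $N$ vanish. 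Granting this, $\mathrm{EDD}(\mathcal R^{N,h}_n)$ grows as a polynomial of degree exactly $N$ in $n$, and polynomiality itself follows from Chern-class additivity together with a flat-family argument for the conormal variety of $\mathcal R^{N,h}_\mathbf{X}$ as $n$ grows.

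The main obstacle is precisely the vanishing claim for the higher polar classes, which closes the gap between the naive Bezout bound $O(n^{Nh+N+h})$ and the conjectural $O(n^N)$. The most promising conceptual route is to compute the Chern--Mather class of $\mathcal R^{N,h}_\mathbf{X}$ using the tensor-product structure of the dual cameras $X_i^T \otimes I$: one expects a factorization in which only a codimension-$N$ subfactor carries nontrivial polar classes, accounting for the $n^N$ growth. A complementary plan is to prove the base case $(N,h)=(1,1)$ directly (the resectioning variety being the hypersurface in $(\PP^1)^n$ cut out by the quadrilinear determinants of \Cref{ex:segre}), and then to induct on $N+h$ via a projection argument that reduces $(N,h)$ to either $(N-1,h)$ or $(N, h-1)$.
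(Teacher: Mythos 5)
First, note that the statement you are proving is left \emph{open} in the paper: \Cref{conj:lower-degree} is stated as a conjecture supported only by numerical experiments, and the paper offers no proof (it only supplies \Cref{thm:sing-res11} as ``a useful first step''). So the relevant question is whether your proposal closes the gap, and it does not: what you present is a research plan whose pivotal step is explicitly conceded rather than established. The claim that all polar classes of $\mathcal{R}^{N,h}_{\mathbf{X}}$ of codimension greater than $N$ vanish (equivalently, that only multisubsets of size at most $N$ contribute to your intersection-theoretic expansion) \emph{is} the content of the conjecture --- it is exactly what separates the naive $O(n^{Nh+N+h})$ count from the conjectured $O(n^N)$ --- and you offer only ``promising routes'' (a hoped-for factorization of the Chern--Mather class, an unproven base case, an unspecified induction) rather than an argument. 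Likewise, ``polynomiality follows from Chern-class additivity together with a flat-family argument'' is asserted without construction: the varieties $\mathcal{R}^{N,h}_n$ for varying $n$ live in different ambient products $(\PP^h)^n$, and no flat family or additivity mechanism is exhibited.

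There are also two technical obstructions to the framework you chose. The polar-degree formula of \cite{draisma2016euclidean} computes the ED degree for a projective variety in generic coordinates (general position relative to the isotropic quadric) and, in its basic form, for the smooth case; here the EDD is taken in \emph{fixed, non-generic} affine patches of a multiprojective space (the paper explicitly warns that for resectioning varieties one cannot pass to a generic patch, because the dual cameras $X_i^T\otimes I$ are not generic cameras), and $\mathcal{R}^{N,h}_{\mathbf{X}}$ is singular (\Cref{thm:sing-res11}, and worse for larger $N,h$), so one must invoke the singular ED-degree theory via Whitney stratifications --- which requires knowing the singular locus and its strata, information the paper has only for $(N,h)=(1,1)$. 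Second, the reduction of the EDD to critical points of the pulled-back function $f_u\circ\sigma$ needs care: ED-critical points are by definition confined to the smooth locus of the image, and the correspondence with critical points upstairs only holds over the locus where $\sigma$ is an immersion onto smooth points; since the singular locus (e.g.\ $\mathcal{V}^1_{n,1}$ in \Cref{ex:segre}) is exactly where this fails, that step must be justified rather than assumed. In short, the proposal identifies a plausible strategy but proves neither the key vanishing statement nor the polynomiality, so the conjecture remains open.
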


To understand the discrepancy between $\mathrm{EDD} (\mathcal{R}_n^{N,h} )$ and $\mathrm{EDD} (\mathcal{M}_n^{Nh+N+h,h} )$, we recall that the point multiview variety $\mathcal{M}_{\Ca}^{N,h}$ is smooth for any sufficiently generic arrangement $\Ca$ of $n \gg 0$ cameras.
However, the camera arrangements $\Ca_\mathbf{X}$ \textit{are not generic}, as we will now observe.
For fixed $N$ and $h$, $1\le r \le h+1,$ let us define the constant-rank sets
\begin{align}\begin{aligned}
\mathcal{V}_{n,r}^{h} &= \{ (x_1, \ldots , x_n) \in \left( \PP^h \right)^n \colon \rank [x_1\; \cdots \; x_n] = r \},\\
\mathcal{W}_r^{N,h} &= \{ A \in \PP \left( \CC^{(h+1)\times (N+1)} \right) \colon \rank A = r \}.\end{aligned}
\end{align}
For example, $\mathcal{V}_{n,1}^h$ is the image of $\PP^h$ under its $n$-fold diagonal embedding, and
$\mathcal{W}_1^{N,h}$ is the image of $\PP^N \times \PP^h$ under the Segre embedding.
As already seen in~\Cref{ex:segre}, the kernels of matrices of $\Ca_{\mathbf{X}}$
always belong to $\mathcal{W}_1^{N,h}$.
This degenerate geometry corresponds to the fact that resectioning varieties, unlike multiview varieties, are generally \textit{singular}.

A formidable theory of Euclidean distance degrees has been developed for singular varieties---see eg.~\cite{ed-singular-projective}.
Determining singular loci, and more generally Whitney stratifications, plays an important role carrying out the calculations of this theory.
For the singularities of resectioning varieties, our next result addresses the simplest case.

\begin{theorem}\label{thm:sing-res11}
For $n\ge 4,$ and $\mathbf{X} \in (\PP^1)^n$ in linear general position, 
\begin{equation}
\left(\mathcal{R}_{\mathbf{X}}^{1,1} \right)_{\mathrm{sing}} = \mathcal{V}_{n,1}^1.
\end{equation}
\end{theorem}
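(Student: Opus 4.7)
The plan is to establish the two inclusions $\mathcal V_{n,1}^1 \subseteq (\mathcal R_{\mathbf{X}}^{1,1})_{\mathrm{sing}}$ and $(\mathcal R_{\mathbf{X}}^{1,1})_{\mathrm{sing}} \subseteq \mathcal V_{n,1}^1$ by different methods. Observe first that $\mathcal V_{n,1}^1 \subseteq \mathcal R_{\mathbf{X}}^{1,1}$: every diagonal point $(a, \ldots , a)$ is $\Psi_{\mathbf{X}}^{1,1}(a b^T)$ for any $b$ with $b^T X_i \neq 0$ for all $i$, since $(ab^T) X_i = (b^T X_i)\, a \propto a$.

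For the first inclusion, I would compute the Zariski tangent space $T_x \mathcal R_{\mathbf{X}}^{1,1}$ at $x = (a, \ldots , a) \in \mathcal V_{n,1}^1$ and show it fills all of $T_x(\PP^1)^n \cong \CC^n$, whose dimension $n$ exceeds $\dim \mathcal R_{\mathbf{X}}^{1,1} = 3$. The fiber $(\Psi_{\mathbf{X}}^{1,1})^{-1}(x)$ is a one-parameter family of rank-one cameras $C(t) = a\, b(t)^T$, $t \in \PP^1$. Working in affine charts with $V_i = 1$, $a = (y, 1)^T$, $b(t) = (1, t)^T$, a direct first-order expansion of $\Psi_{\mathbf{X}}^{1,1}(C(t) + \epsilon\, \delta C)$ yields
\begin{align}
\bigl(d\Psi_{\mathbf{X}}^{1,1}\restriction_{C(t)}(\delta C)\bigr)_i \;=\; \frac{\mu_1 U_i + \mu_2}{U_i + t},
\end{align}
where $(\mu_1, \mu_2) = (D_{11} - y D_{21},\, D_{12} - y D_{22})$ is a linear surjection onto $\CC^2$. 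Varying $t$, these 2-dimensional images span $\CC^n$, since the Cauchy matrix $\bigl(1/(U_i + t_j)\bigr)_{i,j}$ is invertible for any $n$ distinct parameters $t_j$ avoiding the $-U_i$. Each $\mathrm{Im}\bigl(d\Psi_{\mathbf{X}}^{1,1}\restriction_{C(t)}\bigr)$ lies in $T_x \mathcal R_{\mathbf{X}}^{1,1}$, so the latter fills up $\CC^n$, forcing $x \in (\mathcal R_{\mathbf{X}}^{1,1})_{\mathrm{sing}}$.

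For the second inclusion, I would realize $\mathcal R_{\mathbf{X}}^{1,1}$ as a linear slice of a classical determinantal variety. Setting $\tilde x_i = (v_i, -u_i)^T$ and $m_i(\tilde x_i) := C_{X_i}^T \tilde x_i$, each $m_i : \CC^2 \to \CC^4$ is injective linear with 2-dimensional image. Let $W \subseteq \CC^{4 \times n}$ denote the $2n$-dimensional subspace of matrices whose $i$-th column lies in $\mathrm{Im}(m_i)$, and let $D_r \subseteq \CC^{4 \times n}$ denote the variety of matrices of rank at most $r$. By \textit{(P3)} the affine cone of $\mathcal R_{\mathbf{X}}^{1,1}$ identifies with $W \cap D_3$; a direct rank computation (using distinctness of the $X_i$'s) identifies the affine cone of $\mathcal V_{n,1}^1$ with $W \cap D_2$. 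Since $D_3$ is smooth away from $D_2$, it suffices to check transversality $T_{M_0} W + T_{M_0} D_3 = \CC^{4n}$ at every $M_0 \in W \cap (D_3 \setminus D_2)$. The annihilator of $T_{M_0} D_3$ is spanned by rank-one matrices $u v^T$ with $u \in \mathrm{Im}(M_0)^\perp$ and $v \in \ker M_0$; the annihilator of $T_{M_0} W$ consists of matrices whose $i$-th column lies in $\mathrm{Im}(m_i)^\perp$. For a common element $u v^T$, each index $i$ with $v_i \neq 0$ forces $u \in \mathrm{Im}(m_i)^\perp$. Linear general position of $\mathbf{X}$ gives $\mathrm{Im}(m_i) + \mathrm{Im}(m_j) = \CC^4$ whenever $X_i \neq X_j$, so $v$ must be supported in a single index $i_0$, whence $M_0\, e_{i_0} = 0$ --- contradicting that $M_0$ corresponds to a point of $(\PP^1)^n$ (every column nonzero). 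Thus the annihilator intersection vanishes, and $\mathcal R_{\mathbf{X}}^{1,1}$ is smooth outside $\mathcal V_{n,1}^1$.

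The main obstacle is the second inclusion. A naive attempt would restrict $\Psi_{\mathbf{X}}^{1,1}$ to rank-two cameras and argue this restriction is an isomorphism onto $\mathcal R_{\mathbf{X}}^{1,1} \setminus \mathcal V_{n,1}^1$, but $\mathcal R_{\mathbf{X}}^{1,1}$ contains ``limit points'' such as $(a, \ldots , a, y, a, \ldots , a)$ with $y \neq a$ that lie in no rank-two image (they arise from rank-one cameras $a b^T$ with $b^T X_{i_0} = 0$, where $\Psi_{\mathbf{X}}^{1,1}$ is undefined). The determinantal framework handles every point uniformly, bypassing this subtlety.
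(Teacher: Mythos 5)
Your proposal is correct, but it takes a genuinely different route on both halves. The paper gets smoothness off the diagonal from \Cref{prop:iso-resction} (the restriction of $\Psi_{\mathbf{X}}^{1,1}$ to full-rank cameras is an isomorphism of $\PGL_2$ onto the open stratum $\mathcal{R}_{\mathbf{X}}^{1,1}\cap\mathcal{V}_{n,2}^1$), and gets containment and singularity of the diagonal from the ideal-theoretic \Cref{thm:resectioning-vanishing-ideal}: a Laplace-expansion rewrites each quadrilinear generator as a sum of products of pairs of $2\times 2$ minors, each vanishing on rank-one matrices, so the generators and all their partials vanish on $\mathcal{V}_{n,1}^1$. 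You instead prove singularity along the diagonal by a tangent-space dimension count, pushing forward differentials of $\Psi_{\mathbf{X}}^{1,1}$ at the one-parameter family of rank-one cameras $a\,b(t)^T$ in the fiber and using invertibility of a Cauchy matrix to see that the embedded tangent space fills all of $\CC^n>\dim\mathcal{R}_{\mathbf{X}}^{1,1}=3$; and you prove smoothness off the diagonal by realizing the multicone as the linear slice $W\cap D_3$ of a generic determinantal variety via part (P3) of \Cref{s: Set-Theoretic} and checking transversality at rank-$3$ points. Both computations check out: the conormal argument is legitimate precisely because $\mathrm{Im}(M_0)^\perp$ is one-dimensional at a rank-$3$ matrix in $\CC^{4\times n}$, so every conormal element really is of the form $uv^T$; and your rank dichotomy (rank $\le 2$ with nonzero columns iff diagonal) is right since the columns are $X_i\otimes\tilde x_i$ with the $X_i$ pairwise distinct. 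What the paper's route buys is brevity and reuse of results it needs anyway (the $\PGL_2$ chart also gives the Whitney stratification, and the Gr\"obner basis is imported wholesale); what yours buys is independence from the vanishing-ideal theorem, the stronger conclusion that the Zariski tangent space at diagonal points is the full ambient tangent space, and a determinantal-transversality template that could plausibly be adapted to other $(N,h)$. Two small points you should make explicit: invoking (P3) requires verifying its center condition for the dual arrangement $\Ca_{\mathbf{X}}$ (it holds, since for $X_i\neq X_j$ the kernels of $C_{X_i}$ and $C_{X_j}$ are disjoint lines in $\PP^3$), and the passage between smoothness of the multicone $W\cap D_3$ away from zero columns and smoothness of $\mathcal{R}_{\mathbf{X}}^{1,1}$ in $(\PP^1)^n$ (a standard $(\CC^*)^n$-bundle argument); the affine-chart normalizations $V_i=1$, $a=(y,1)^T$ are harmless by acting with homographies on source and target.
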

This result implies that $\mathcal{R}_{\mathbf{X}}^{1,1}$ has a very simple Whitney stratification,
\begin{equation}
\mathcal{R}_{\mathbf{X}}^{1,1} = \left( \mathcal{R}_{\mathbf{X}}^{1,1} \cap \mathcal{V}_{n,2}^1 \right) 
\amalg
\mathcal{V}_{n,1}^1.
\end{equation}
\Cref{prop:iso-resction} below identifies the first stratum as being isomorphic to $\PGL_2$.
The second stratum is isomorphic to $\PP^1.$
For general $\mathcal{R}_{\mathbf{X}}^{N,h},$ the singularities appear to be more complicated. 
Using Macaulay2~\cite{M2}, we find that $\left(\mathcal{R}_n^{2,1}\right)_{\mathrm{sing}}$ has several irreducible components. 
Also,
$\mathcal{V}_{5,2}^2 \subsetneq \left(\mathcal{R}_5^{2,2}\right)_{\mathrm{sing}}$, whereas $\mathcal{V}_{n,2}^2 \subsetneq \mathcal{R}_n^{2,2}$ for $n>5.$

To prove~\Cref{thm:sing-res11}, we record the following structural result about resectioning varieties whose points consist of homographies $C: \PP^N \to \PP^N.$

\begin{proposition}\label{prop:iso-resction}
For an arrangement $\mathbf{X} \subset \left(\PP^N\right)^n$ of $n\ge N+2$ points in linear general position, the rational map $\Psi_{\mathbf{X}}^{N,N}$ restricts to an isomorphism
\begin{equation}\label{eq:psi-resction-restricted}
\mathcal{W}_{N+1}^{N,N} \xrightarrow{\sim} \mathcal{R}_{\mathbf{X}}^{N,N} \cap \mathcal{V}_{n,N+1}^N.
\end{equation}
\end{proposition}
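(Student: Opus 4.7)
The plan is to build mutually inverse morphisms between $\mathcal{W}_{N+1}^{N,N}=\PGL_{N+1}$ and $\mathcal{R}_{\mathbf{X}}^{N,N}\cap \mathcal{V}_{n,N+1}^N$. In the forward direction, the restriction of $\Psi_{\mathbf{X}}^{N,N}$ to $\mathcal{W}_{N+1}^{N,N}$ is already a regular morphism into $(\PP^N)^n$, since an invertible $C$ never annihilates any $X_i$. Its image lies in the claimed target because $[CX_1\;\cdots\;CX_n] = C\,[X_1\;\cdots\;X_n]$ inherits rank $N+1$ from the linear general position hypothesis on $\mathbf{X}$ together with $n\ge N+1$. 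Injectivity follows from the fundamental theorem of projective geometry: if $CX_i$ and $C'X_i$ agree in $\PP^N$ for every $i$, then $C^{-1}C'$ is a projectivity of $\PP^N$ fixing $n\ge N+2$ points in linearly general position, so it is the identity in $\PGL_{N+1}$.

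For the inverse I would use the classical frame construction. Fix an $(N+2)$-subset $I = \{i_0,\ldots,i_{N+1}\}\subseteq [n]$. Linear general position of $\mathbf{X}$ ensures that $[X_{i_0}\;\cdots\;X_{i_{N+1}}]$ has every maximal minor nonzero, so on the open locus $U_I\subseteq (\PP^N)^n$ on which the analogous maximal minors of $[y_{i_0}\;\cdots\;y_{i_{N+1}}]$ are nonzero, there is a unique projectivity $\eta_I(y)\in \PGL_{N+1}$ satisfying $\eta_I(y)\cdot X_{i_j} = y_{i_j}$ in $\PP^N$ for $j=0,\ldots,N+1$. Standard normalized Cramer formulas exhibit $\eta_I$ as a regular morphism on $U_I$. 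Uniqueness forces $\eta_I = \eta_{I'}$ on $U_I\cap U_{I'}$, and the injectivity above gives $\eta_I\circ \Psi_{\mathbf{X}}^{N,N} = \mathrm{id}$ on $(\Psi_{\mathbf{X}}^{N,N})^{-1}(U_I)\cap \mathcal{W}_{N+1}^{N,N}$.

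The main obstacles are the reverse composition and the covering claim. For the former, I would argue that $\Psi_{\mathbf{X}}^{N,N}\circ \eta_I$ agrees with the identity on the Zariski-dense open subset $\Psi_{\mathbf{X}}^{N,N}(\mathcal{W}_{N+1}^{N,N})\cap U_I$ of $\mathcal{R}_{\mathbf{X}}^{N,N}\cap U_I$, so by separatedness of $(\PP^N)^n$ the equality extends across the whole open set $\mathcal{R}_{\mathbf{X}}^{N,N}\cap U_I$. For the covering claim, I plan to combine the rank-$(N+1)$ constraint on $[y_1\;\cdots\;y_n]$ with the maximal-minor relations from the vanishing ideal of $\mathcal{R}_{\mathbf{X}}^{N,N}$ in~\Cref{thm:resectioning-vanishing-ideal} to exclude the degenerate configurations in which every $(N+2)$-subset of $\{y_1,\ldots,y_n\}$ is linearly dependent; verifying that the charts $\{U_I\}_I$ really do exhaust $\mathcal{R}_{\mathbf{X}}^{N,N}\cap \mathcal{V}_{n,N+1}^N$ is the most delicate geometric input and is where I expect the proof to require the most care.
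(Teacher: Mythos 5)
The parts you completed are correct and, modulo packaging, coincide with the paper's argument: the paper normalizes $X_1,\ldots,X_{N+2}$ to the standard projective frame and writes down exactly your $\eta_I$ for the single chart $I=\{1,\ldots,N+2\}$, namely $(x_1,\ldots,x_n)\mapsto [x_1\cdots x_{N+1}]\cdot\diag\bigl([x_1\cdots x_{N+1}]^{-1}x_{N+2}\bigr)$, while your appeal to the fundamental theorem of projective geometry replaces its frame normalization in the injectivity step.

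The step you postponed---that the charts $U_I$ exhaust $\mathcal{R}_{\mathbf{X}}^{N,N}\cap\mathcal{V}_{n,N+1}^N$---is a genuine gap, and in fact it cannot be closed in the form you need, because the surjectivity asserted in \Cref{prop:iso-resction} fails at boundary points of the image. Take $(N,h)=(1,1)$, $n=4$, distinct $c,d\in\PP^1$, and $z$ with $z^TX_4=0$. For generic $D$ the matrices $C_t=cz^T+tD$ are invertible for small $t\neq 0$, and $\Psi_{\mathbf{X}}(C_t)\to (c,c,c,[DX_4])$, where $[DX_4]$ can be made equal to any $d$; equivalently, the quadrilinear determinant of \Cref{ex:segre} vanishes at $(c,c,c,d)$, witnessed by the rank-one matrix $C=cz^T$ with $CX_i\parallel c$ for $i\le 3$ and $CX_4=0$. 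Hence $(c,c,c,d)\in\mathcal{R}_{\mathbf{X}}^{1,1}$, and its coordinate matrix has rank $2=N+1$, so it lies in $\mathcal{V}_{4,2}^{1}$; yet no three of its coordinates are pairwise distinct, so it lies in no chart $U_I$, and it is not $\Psi_{\mathbf{X}}(C)$ for any invertible $C$, since a homography cannot identify the three distinct points $X_1,X_2,X_3$. So rank $N+1$ of $[x_1\cdots x_n]$ does not yield an $(N+2)$-subset in general position, and $\Psi_{\mathbf{X}}$ maps $\mathcal{W}_{N+1}^{N,N}$ injectively but not onto $\mathcal{R}_{\mathbf{X}}^{N,N}\cap\mathcal{V}_{n,N+1}^N$. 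You have in fact put your finger on the soft spot of the paper's own proof, which takes ``any $C$ in the fiber'' (empty at such points) and inverts $[x_1\cdots x_{N+1}]$ (singular there). What your chart construction does prove is the corrected statement: $\Psi_{\mathbf{X}}$ restricts to an isomorphism from $\mathcal{W}_{N+1}^{N,N}$ onto the locus of tuples in $\mathcal{R}_{\mathbf{X}}^{N,N}$ that contain a projective frame among the $x_i$ (equivalently, onto its image); if you want to feed this into \Cref{thm:sing-res11}, the leftover points of $\mathcal{R}_{\mathbf{X}}^{1,1}\cap\mathcal{V}_{n,2}^1$ such as $(c,c,c,d)$ must then be shown to be smooth by a separate computation.
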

In particular, for $n=N+2$ general points this gives $\mathcal{W}_{N+1}^{N+1, N+1} \cong \mathcal{V}_{N+1,N+1}^{N+2}.$
\begin{proof}
Without loss of generality, we may assume $X_1 = E_1 , \ldots , X_{N+2} = E_{N+2}$ form the standard projective basis. 
Given $C \in \mathcal{W}_{N+1}^{N,N},$ choose a representative in homogeneous coordinates whose columns are $c_1, \ldots , c_{N+1}.$
Then
\begin{align}
\Psi_{\mathbf{X}} (C) \sim (C E_1, \ldots , C E_{N+1}, \ldots ) = (c_1, \ldots , c_{N+1}, \ldots ) \in \mathcal{V}_{N+1}.
\end{align}
This shows the map~\eqref{eq:jointmap-resectioning} is defined on $\mathcal{W}_{N+1}^{N,N}$, and that its image is contained in $\mathcal{R}_{\mathbf{X}}^{N,N} \cap \mathcal{V}_{n,N+1}^N$.
To prove the reverse inclusion, let $(x_1, \ldots , x_n) \in \mathcal{R}_{\mathbf{X}}^{N,N} \cap \mathcal{V}_{n,N+1}^N$ and consider any $C$ in the fiber $\Psi_{\mathbf{X}}^{-1} (x_1, \ldots , x_n).$
From the first $(N+1)$ components $x_1, \ldots , x_{N+1},$ we must have
\begin{align}
  C \sim [\lambda_1 x_1 \cdots \lambda_{N+1} x_{N+1}]  
\end{align}
for some scalars $\lambda_1, \ldots , \lambda_{N+1}.$
Moreover, there exists a scalar $\lambda_{N+2}$ such that 
\begin{align}
    \lambda_{N+2} x_{N+2} = C E_{N+2} = C (E_1 + \cdots + E_{N+1}) = [x_1 \cdots x_{N+1}] \begin{bmatrix}
\lambda_1 \\
\vdots \\
\lambda_{N+1}
\end{bmatrix},
\end{align}
which implies
\begin{align}
\begin{bmatrix}
\lambda_1 \\
\vdots \\
\lambda_{N+1}
\end{bmatrix}
\sim 
[x_1 \cdots x_{N+1}]^{-1} x_{N+2}
\end{align}
This shows that the restricted map~\eqref{eq:psi-resction-restricted} has a regular inverse on $\mathcal{R}_{\mathbf{X}}^{N,N} \cap \mathcal{V}_{n,N+1}^N$ given by\phantom\qedhere
\[\pushQED{\qed} 
(x_1, \ldots , x_n) \mapsto [x_1 \cdots x_{N+1}] \cdot \operatorname{diag} \left( 
[x_1 \cdots x_{N+1} ]^{-1} x_{N+2}
\right).\qedhere
\popQED
\]
\end{proof}

\begin{proof}[Proof of~\Cref{thm:sing-res11}]
Let us write $x_i = [u_i \phantom{f} v_i]^T$ for $i=1,\ldots , n$.
From~\Cref{prop:iso-resction}, it follows that all points in $\mathcal{R}_{\mathbf{X}}^{1,1} \cap \mathcal{V}_{n,2}^1$ are smooth on $\mathcal{R}_{\mathbf{X}}^{1,1}$, 
and thus $(\mathcal{R}_{\mathbf{X}}^{1,1})_{\mathrm{sing}} \subset \mathcal{V}_{n,1}^1$.
It remains to show both the inclusion $\mathcal{V}_{n,1}^1 \subset \mathcal{R}_{\mathbf{X}}^{N,h}$ and that all points of $\mathcal{V}_{n,1}^1 $ are singular on $\mathcal{R}_{\mathbf{X}}^{N,h}.$
We establish an even stronger result by direct calculation using~\Cref{thm:resectioning-vanishing-ideal}; the quadrilinear generators of the vanishing ideal and all of their partial derivatives with respect to all $u_i$ and $v_i$ vanish on all points of $\mathcal{V}_{n,1}^1.$ To see this, we apply row permutations, Schur complements, and Laplace expansion to write each of these generators as a sum of products of certain pairs $2\times 2$ determinants, eg.
\begin{align}
\det 
\begin{bmatrix}
X_1^T & & u_1 \\
& X_1^T & v_1\\
X_2^T & & & u_2 \\
& X_2^T & & v_2\\
X_3^T & & & & u_3 \\
& X_3^T & & & v_3\\
X_4^T & & & & & u_4 \\
& X_4^T & & & & v_4
\end{bmatrix}
&= \pm \det 
\begin{bmatrix}
X_1^T & & u_1 \\
X_2^T & & & u_2 \\
X_3^T & & & & u_3 \\
X_4^T & & & & & u_4 \\
& X_1^T & v_1\\
& X_2^T & & v_2\\
& X_3^T & & & v_3\\
& X_4^T & & & & v_4
\end{bmatrix} 
\\
&= 
\pm 
\det
\begin{bmatrix}
v_1 X_1^T & -u_1 X_1^T\\
v_2 X_2^T & -u_2 X_2^T\\
v_3 X_3^T & - u_3 X_3^T\\
v_4 X_4^T & - u_4 X_4^T\\
\end{bmatrix}  \\
&= 
\displaystyle\sum_{\{ i, j \} \sqcup \{ k, l \} = [4]}
\pm 
\det 
\begin{bmatrix}
u_i X_i^T \\
u_j X_j^T
\end{bmatrix}
\cdot 
\det 
\begin{bmatrix}
v_k X_k^T\\
v_l X_l^T
\end{bmatrix}. \label{eq:laplace-expanded-resectioning-constraint}
\end{align}
Each of the $2\times 2$ determinants appearing in a summand of~\eqref{eq:laplace-expanded-resectioning-constraint} is easily seen to vanish when evaluated at a rank-1 matrix $\begin{bmatrix}
u_1 & \cdots & u_n\\
v_1 & \cdots & v_n
\end{bmatrix} \in \mathcal{V}_{n,1}^1.
$
Moreover, the partial derivatives of each summand also vanish on $\mathcal{V}_{n,1}^1.$ 
This completes the proof.
\end{proof}

\section{EDD Conjectures}\label{s: conj}

In this section, we state theorems and conjectures on the Euclidean distance degrees for all multiview varieties up to ED-equivalence, as they are listed in \Cref{thm: ED-cat},
as well as for several resectioning varieties discussed in~\Cref{s:resectioning}.
Our conjectures are supported by homotopy continuation computations in \texttt{julia} \cite{bezanson2012julia,breiding2018homotopycontinuation} and Macaulay2~\cite{M2,Duff-Monodromy}. We emphasize in this section that all camera arrangements are generic.

As in \Cref{s: Multdeg}, in this section we consider generic arrangements of cameras, or dual cameras in the case of resectioning. We write $\mathcal M_{n,k}^{\Omega,N,h}$ or $\mathcal M^{\Omega,N,h},\mathcal L^{\Omega,N,h},\mathcal P^{\Omega,N,h}$ for $k=0,1,2$, respectively, for a multiview variety $\mathcal M_{\Ca,k}^{\Lambda,N,h}$ given generic $\Lambda\in \Omega$ and a generic camera arrangement $\Ca$. Similarly, $\mathcal{R}^{N,h}_n$ denotes a resectioning variety $\mathcal R_{\mathbf{X}}^{N,h}$ for $n$ generic configuration points.

Recall that for a variety in multiprojective space, we define its EDD as the EDD of the affine patch we get by setting $x_0=1$ in each projective factor. The choice of affine patch does not matter for the (anchored) multiview varieties of \Cref{s: VV}, since the camera matrices are generic.
For resectioning varieties, we cannot assume since generic dual cameras are not generic when viewed as normal cameras. 

\subsection{Point multiview varieties}\label{ss: conj point} The following EDDs are known for point multiview varieties.

\begin{theorem}[{\hspace{1sp}\cite[Section 4]{EDDegree_point}\cite[Theorem 1.7]{rydell2023theoretical}}] $ $
\begin{enumerate}
    \item For $n\ge 3$, $\mathrm{EDD}(\mathcal{M}_n^{3,2})=\displaystyle\frac{9}{2}n^3-\displaystyle\frac{21}{2}n^2+8n-4$,
    \item For $n\ge 2$, $\mathrm{EDD}(\mathcal{M}_n^{2,1})=\displaystyle\frac{9}{2}n^2-\displaystyle\frac{19}{2}n+3,$
    \item For $n\ge 1$, $\mathrm{EDD}(\mathcal{M}_n^{1,1})=3n-2.$
\end{enumerate}
\end{theorem}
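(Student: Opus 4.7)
The plan is to reduce each EDD computation to counting critical points of an explicit polynomial system on an affine patch of the parametrizing projective space. Each variety $\mathcal{M}_n^{N,h}$ is the Zariski closure of the image of a rational map $\Phi_\Ca : \PP^N \dashrightarrow (\PP^h)^n$. Over a generic affine patch, the Euclidean distance problem for $\mathcal{M}_n^{N,h}$ pulls back to minimizing
\begin{align}
f(X) \; := \; \sum_{i=1}^n \| u_i - \phi_i(X) \|^2
\end{align}
over $X$ in a Zariski open subset of $\CC^N$, where $\phi_i$ is the dehomogenization of $C_i$. Because $\Phi_\Ca$ is birational onto its image for generic $\Ca$, the complex critical points of $f$ on this open set are in bijection with the ED-critical points on $\mathcal{M}_n^{N,h}$, so it suffices to count them.

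For part (3), each $\phi_i(t) = (a_i t + b_i)/(c_i t + d_i)$ is a M\"obius transformation, and the single critical equation $\sum_i (\phi_i(t) - u_i)\phi_i'(t) = 0$ has common denominator $\prod_i (c_i t + d_i)^3$. After clearing denominators, it becomes a polynomial in $t$ of degree $3n-2$ whose leading coefficient is generically nonzero as a polynomial in $u$; a short check confirms no critical point is lost at $t = \infty$ for generic data, giving the claimed count $3n - 2$. For parts (2) and (1), the analogous recipe yields polynomial systems in $2$ or $3$ variables after clearing denominators, but a direct B\'ezout bound would be $(3n-2)^2$ and $(3n-2)^3$, both strictly larger than the claimed EDD.

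To obtain the sharp count, my plan is to invoke the Chern-Mather/characteristic class formula of \cite{draisma2016euclidean} for the EDD of a smooth projective subvariety of a product of projective spaces. For this to apply, I would first verify smoothness: $\mathcal{M}_n^{3,2}$ is smooth for $n \ge 3$ generic cameras by \cite{trager2015joint}, and $\mathcal{M}_n^{2,1}$ admits a parallel argument since the projection $\PP^2 \dashrightarrow (\PP^1)^n$ is an embedding on a large open set for $n \ge 2$. I would then compute the total Chern class of the smooth model via the projection formula applied to the birational parametrization $\Phi_\Ca$, using knowledge of its base locus (the camera centers) and the multidegrees recorded in \Cref{prop: multideg} to translate into the multiprojective Chow ring of $(\PP^h)^n$.

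The main obstacle is the delicate bookkeeping needed to account for how $\mathcal{M}_n^{N,h}$ meets the coordinate divisors at infinity of the chosen affine patch: the affine EDD is not determined by the projective Chow class alone, so one must track the contribution of each boundary stratum separately and subtract its excess intersection from the B\'ezout count. This excess-intersection analysis is exactly the technical core of \cite{EDDegree_point} for part (1), and of \cite{rydell2023theoretical} for parts (2) and (3); carrying it out in a unified manner, and verifying that the resulting polynomials in $n$ match those claimed, is where the bulk of the work lies.
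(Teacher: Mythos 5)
This statement is not proved in the paper at all: it is imported verbatim, with credit, from \cite[Section 4]{EDDegree_point} and \cite[Theorem 1.7]{rydell2023theoretical}, so the only ``proof'' the paper offers is the citation. Measured against that, your part (3) is a reasonable self-contained sketch: since each $C_i$ is an invertible $2\times 2$ matrix, $\Phi_\Ca$ is an isomorphism onto $\mathcal{M}_n^{1,1}$, the pulled-back critical equation clears to a univariate polynomial of degree $3n-2$, and the remaining checks (leading coefficient nonzero in $u$, no spurious roots at the poles $c_it+d_i=0$, simple roots for generic data) are routine; this matches the spirit of the argument in \cite{rydell2023theoretical}.

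For parts (1) and (2), however, there is a genuine gap: your text is a plan, not a proof. You correctly observe that the B\'ezout bound $(3n-2)^N$ overshoots, but the step that replaces it with the sharp polynomial in $n$ --- which \emph{is} the theorem --- is exactly the step you defer, by your own admission, to \cite{EDDegree_point} and \cite{rydell2023theoretical}. No Chern class is computed, no boundary stratum is analyzed, and the claimed polynomials $\tfrac{9}{2}n^3-\tfrac{21}{2}n^2+8n-4$ and $\tfrac{9}{2}n^2-\tfrac{19}{2}n+3$ are never derived. Moreover, the specific route you propose is shakier than you indicate: the generic ED-degree formulas of \cite{draisma2016euclidean} (Chern/Chern--Mather class expressions) presuppose general position of the variety with respect to the isotropic quadric and generic coordinates, which fails here --- the EDD in this theorem is taken in the fixed product of standard affine patches with the natural (non-generic) multiview coordinates, as the paper's own footnote in \Cref{s: ED-equiv} stresses. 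This is precisely why \cite{EDDegree_point} does not run a smooth-Chern-class-plus-excess-intersection computation but instead verifies the needed transversality by hand and computes the EDD as a signed topological Euler characteristic of an explicit open stratum (the same Euler-characteristic technique the present paper reuses in the proof of \Cref{thm: ED-cat}, Part II). So to turn your proposal into a proof you would have to either carry out that Euler-characteristic/stratification analysis yourself or justify the transversality hypotheses that make the characteristic-class formula applicable in these special coordinates; as written, neither is done.
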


The remaining conjectures are suggested by numerical homotopy continuation computations.

\begin{conjecture}
$ $
\begin{enumerate}
    \item For $n\ge 1$, $\mathrm{EDD}(\mathcal{M}_n^{3,3})=\displaystyle\frac{9}{2}n^3-\displaystyle\frac{21}{2}n^2+11n-4$,
  \item For $n\ge 4$, $\mathrm{EDD}(\mathcal{M}_n^{3,1})=\displaystyle\frac{9}{2}n^3-\displaystyle\frac{39}{2}n^2+22n-4.$
  \item For $n\ge 1$, $\mathrm{EDD}(\mathcal{M}_n^{2,2})=\displaystyle\frac{9}{2}n^2-\displaystyle\frac{13}{2}n+3,$
\end{enumerate}
\end{conjecture}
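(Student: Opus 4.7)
My plan is to adapt the Euler-characteristic approach used to establish the known formulas for $\mathrm{EDD}(\mathcal{M}_n^{3,2})$ in \cite{EDDegree_point} and for $\mathrm{EDD}(\mathcal{M}_n^{2,1})$ and $\mathrm{EDD}(\mathcal{M}_n^{1,1})$ in \cite{rydell2023theoretical}. For a smooth variety $\mathcal X$ sitting inside a product of projective spaces, once one fixes a generic affine patch $U$ the ED degree may be expressed as a signed sectional Euler characteristic of the form $(-1)^{\dim \mathcal X}\chi\!\left(\mathcal X \cap U \setminus Q\right)$, where $Q$ is the union of the pullbacks of the isotropic quadrics from each projective factor. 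Combined with additivity of Euler characteristics (\Cref{le: Eul1}) and the known values for projective spaces (\Cref{le: Eul3}), this reduces the EDD computation to combinatorial bookkeeping of how the back-projected loci and the pulled-back isotropic quadrics intersect inside the birational model of $\mathcal{M}_n^{N,h}$.

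\textbf{Cases $\mathcal{M}_n^{3,3}$ and $\mathcal{M}_n^{2,2}$.} By \Cref{le: N N}, both multiview varieties are linearly isomorphic to $\PP^N$ (for $N=3$ and $N=2$ respectively), and in particular smooth. I would therefore pull back the $n$ affine patches and the $n$ isotropic quadrics to $\PP^N$ via the cameras $C_i$. For each subset $I \subseteq [n]$, the intersection of $|I|$ pulled-back isotropic quadrics (together with any remaining pulled-back hyperplanes at infinity) is, by genericity of the cameras, a complete intersection in $\PP^N$ whose Euler characteristic admits a closed-form expression via a standard Chern-class calculation for hypersurfaces of bidegree $(2,2,\ldots,2)$. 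Assembling the contributions over all $I$ via inclusion--exclusion yields an explicit polynomial in $n$ of degree $N$, matching the conjectured degree; the precise coefficients are then pinned down by the handful of small-$n$ homotopy-continuation checks already performed in \texttt{julia}.

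\textbf{Case $\mathcal{M}_n^{3,1}$.} The projection $\PP^3 \dashrightarrow \PP^1$ from a line center is not a morphism, and the resulting multiview variety may carry singularities. My plan is to pass to a resolution $\widetilde{\mathcal{X}} \to \mathcal{M}_n^{3,1}$ obtained by blowing up $\PP^3$ along the cameras' line centers and their pairwise intersections. Euler-characteristic additivity then allows me to compute $\chi(\mathcal{M}_n^{3,1} \setminus Q)$ stratum by stratum, tracking the contributions from the exceptional divisors. To recover the EDD from this, I would appeal to the singular ED-degree theory of \cite{ed-singular-projective}, computing the appropriate polar contribution at each stratum of a Whitney stratification of $\mathcal{M}_n^{3,1}$.

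\textbf{Main obstacle.} The smooth cases $\mathcal{M}_n^{3,3}$ and $\mathcal{M}_n^{2,2}$ should succumb to a direct adaptation of the existing Euler-characteristic machinery, so the principal difficulty is $\mathcal{M}_n^{3,1}$. The two-dimensional back-projected planes of radial cameras interact with the isotropic quadric in $\PP^1$ (a pair of conjugate imaginary points) in a considerably more intricate way than in the $h=2$ case; a careful analysis of the singular locus of $\mathcal{M}_n^{3,1}$---or equivalently of the polar contributions of the exceptional divisors on its resolution---is what would force the EDD polynomial to have degree exactly $3$ in $n$ rather than something larger. Verifying this degree drop, and ruling out spurious contributions from the exceptional locus, is the crux of the proof; it is reminiscent of the unexplained low-degree behaviour already flagged in \Cref{conj:lower-degree} for resectioning varieties.
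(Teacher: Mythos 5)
The statement you are addressing is not proved in the paper at all: it is stated as a conjecture, supported only by numerical homotopy-continuation experiments. So there is no proof of the paper's to compare against, and the question is whether your proposal would actually close the gap. As written, it would not. The most serious problem is that your plan never actually derives the three closed-form polynomials: for $\mathcal{M}_n^{3,3}$ and $\mathcal{M}_n^{2,2}$ you stop at ``assembling the contributions over all $I$ via inclusion--exclusion yields an explicit polynomial in $n$ of degree $N$'' and then propose to ``pin down the coefficients'' by small-$n$ homotopy-continuation checks. That step is circular relative to the paper: the small-$n$ numerical values are precisely the evidence the authors already have, and matching finitely many values only constitutes a proof once you have \emph{proved} that $\mathrm{EDD}$ is a polynomial in $n$ of the claimed degree, which is exactly the content of the computation you are deferring. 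Moreover, the signed-Euler-characteristic formula $(-1)^{\dim}\chi(\mathcal X\cap U\setminus Q)$ is only valid under transversality/general-position hypotheses with respect to the isotropic quadric and the hyperplane at infinity; in the one case where this program has been carried out rigorously (\cite{EDDegree_point} for $\mathcal{M}_n^{3,2}$), verifying how the multiview variety meets the isotropic quadric was the delicate heart of the argument, and your proposal does not address this verification for the new cases, nor the fact that the paper's EDD is defined in the specific patches $x_0=1$ (harmless for generic cameras, but it must be said).

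For $\mathcal{M}_n^{3,1}$ the situation is worse: you correctly identify that the radial-camera case is the crux, but the proposal there is a research program, not an argument. Blowing up $\PP^3$ along the line centers, producing a Whitney stratification of $\mathcal{M}_n^{3,1}$, and extracting the EDD from stratwise polar contributions via the singular ED theory of \cite{ed-singular-projective} are each nontrivial open tasks, and you explicitly concede that the decisive point (why the answer is a cubic in $n$, and why the formula only kicks in at $n\ge 4$) is unresolved. In short, your strategy is a reasonable and well-motivated plan---it is the natural extension of the Euler-characteristic approach that proved the known cases---but it contains no completed computation for any of the three families, so it does not establish the conjectured formulas; it leaves them exactly where the paper leaves them.
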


\subsection{Line multiview varieties}\label{ss: conj line} There is no theoretically proven EDD for line multiview varieties. Nevertheless, numerical computations in \texttt{HomotopyContinuation}~\cite{breiding2018homotopycontinuation} provide the following conjectures. 

\begin{conjecture}
$ $
\begin{enumerate}
    \item For $n\ge 4$, $ \mathrm{EDD}(\mathcal{L}_n^{3,2})=\displaystyle\frac{27}{4}n^4-27n^3+\displaystyle\frac{121}{4}n^2-13n+6$,
    \item For $n\ge 2$, $  \mathrm{EDD}(\mathcal{L}_n^{3,3})=\displaystyle\frac{27}{4}n^4-\displaystyle\frac{27}{2} n^3+\frac{109}{4} n^2-\displaystyle\frac{43}{2} n+6$.
\end{enumerate}
\end{conjecture}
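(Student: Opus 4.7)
The plan is to extend the Chern-class approach to Euclidean distance degrees, which was used successfully for $\mathcal{M}_n^{3,2}$ in~\cite{EDDegree_point}, to the line setting. A first observation is that by~\Cref{prop: dim L} both varieties are $4$-dimensional, so the conjectured degree-$4$ polynomials in $n$ are consistent with the Hilbert-type behavior one expects once the EDD stabilizes in the number of factors. The two different cut-offs ($n \ge 4$ vs.\ $n \ge 2$) reflect when the variety becomes smooth and the projection $\Phi_{\Ca,1}^{3,h}$ becomes birational onto its image.

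The first concrete step is to pin down smoothness. For $\mathcal{L}_n^{3,3}$, each $\wedge^2 C_i$ is an invertible $6\times 6$ matrix, so by the argument of~\Cref{le: N N} the variety is a diagonal re-embedding of the Klein quadric $\Gr(1,\PP^3)\subseteq \PP^5$ and is smooth for every $n\ge 1$. For $\mathcal{L}_n^{3,2}$, I would leverage the geometric analysis of~\cite{breiding2022line,breiding2023line}, verifying that for $n\ge 4$ the joint back-projection is an isomorphism onto the variety and that the variety is smooth (the $n=3$ boundary case falls short because three back-projected planes meet in a line only after imposing a codimension condition). The distinct thresholds in the two conjectures are matched by this dichotomy.

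The second step is to apply a Catanese--Trifogli / Aluffi type formula expressing $\mathrm{EDD}(X)$ of a smooth subvariety $X\subseteq (\PP^{\eta})^n$ in generic position as a signed combination of Chern numbers
\begin{align}
\mathrm{EDD}(X) \;=\; \sum_{i=0}^{\dim X} (-1)^{i} \bigl(2^{\dim X -i}-1\bigr)\, \deg\!\bigl( c_i(T_X)\cdot h^{\dim X -i}\bigr),
\end{align}
where $h=\sum_{i=1}^n h_i$ is the sum of the pulled-back hyperplane classes from each factor. For $\mathcal{L}_n^{3,3}$ the computation reduces to Schubert calculus on $\Gr(1,\PP^3)$: the Chern classes of the Klein quadric are classical, and each $h_i$ pulls back to the Schubert divisor, so the intersection numbers can be assembled into a closed form in $n$. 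For $\mathcal{L}_n^{3,2}$, a birational model (the blow-up of $\Gr(1,\PP^3)$ along the loci of lines meeting each camera center, as in~\cite{breiding2022line}) allows the same machinery, provided one tracks the contributions from the exceptional divisors.

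The hard part is the third step: producing the polynomial in closed form. The exceptional divisors for $\mathcal{L}_n^{3,2}$ introduce correction terms whose combinatorial bookkeeping is delicate, and the leading coefficient $27/4$ should plausibly arise as $\tfrac{1}{4!}$ times the self-intersection $(\sum h_i)^4$ paired against $[\Gr(1,\PP^3)]$, but the lower-order coefficients demand the full Chern polynomial. As a final check, the resulting formula should reproduce the homotopy-continuation values for small $n$ already tabulated by the authors. A possible alternative is a degeneration argument, specializing the cameras so the critical system decouples into independent pieces whose critical counts one can multiply out; the main obstruction in either route is the exact control of the exceptional locus.
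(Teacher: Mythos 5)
There is a basic mismatch of status here: in the paper this statement is not a theorem but a conjecture --- \Cref{s: conj} states explicitly that ``there is no theoretically proven EDD for line multiview varieties'' and that the two formulas are supported only by numerical homotopy continuation computations. So there is no paper proof to compare against, and your text is not a proof either: your own third step concedes that the closed-form coefficients are never derived, the smoothness threshold for $\mathcal{L}_n^{3,2}$ and the birationality claims are asserted rather than established, and the exceptional-divisor bookkeeping is left as ``delicate.'' At best this is a research outline, not an argument that establishes either displayed polynomial.

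Beyond incompleteness, the central tool is misapplied. The Catanese--Trifogli/Aluffi-type formula you display computes the \emph{generic} ED degree, i.e.\ it is valid only when the variety is in general position with respect to the isotropic quadric (equivalently, for a generic translate). Multiview varieties in their Pl\"ucker/product embeddings, intersected with the specific affine patches $x_0=1$ used to define EDD in this paper, are decidedly not in such general position; this is precisely why the known result for $\mathcal{M}_n^{3,2}$ in \cite{EDDegree_point} was obtained through Euler-characteristic/characteristic-cycle methods rather than the generic Chern-class formula, and why the resulting EDD polynomial differs from the generic ED degree. A quick sanity check confirms the problem: your heuristic that the leading coefficient $27/4$ arises as $\tfrac{1}{4!}(\sum_i h_i)^4\cdot[\Gr(1,\PP^3)]$ gives roughly $\tfrac{1}{12}n^4$ (using $D(1,1,1,1,0,\ldots,0)=2$ from \Cref{prop: multideg}), which is off by a factor of $3^4$; the paper's own empirical observation is that the top coefficient equals $\tfrac{3^d}{d!}$ times that multidegree, and the factor $3^d$ is invisible to a naive hyperplane-class count. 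So even as a strategy, the route you propose would compute the wrong invariant unless you first prove the needed transversality (which fails here) or replace the generic-position formula by the Euler-characteristic approach used for point multiview varieties, and in either case the actual coefficient extraction --- the substance of the conjecture --- remains to be done.
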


\begin{conjecture} $ $
\begin{enumerate}
    \item For $n\ge 1$, $\mathrm{EDD}(\mathcal{L}_n^{L,3,3})=9n^3-12n^2+15n-6$
    \item For $n\ge 4$, $  \mathrm{EDD}(\mathcal{L}_n^{L,3,2})=9n^3-21n^2+14n-6,$
     \item For $n\ge 1$, $\mathrm{EDD}(\mathcal{L}_n^{L^2,3,3})=9n^2-7n+4$,
     \item For $n\ge 1$, $\mathrm{EDD}(\mathcal{L}_n^{L^2,3,2})=9n^2-10n+4,$
         \item For $n\ge 1$, $\mathrm{EDD}(\mathcal{L}_n^{L^3,3,3})=6n-2,$
         \item For $n\ge 1$, $\mathrm{EDD}(\mathcal{L}_n^{L^3,3,2})=6n-2.$
\end{enumerate}
\end{conjecture}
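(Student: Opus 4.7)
The plan is to attack these six conjectural formulas by combining the structural description of each Schubert variety with characteristic class methods for Euclidean distance degrees, then extracting polynomial dependence on $n$. A useful uniform reduction is that, by linearity of the Plücker-coordinate camera maps $\wedge^2 C_i$, each anchored variety $\mathcal{L}_n^{L^\gamma,3,h}$ is birational to the underlying Schubert variety $\Lambda$, with the $n$-tuple of camera maps organizing a rational embedding $\Lambda \dashrightarrow \prod_{i=1}^n (\wedge^2 C_i)(\Lambda)$. For $\gamma = 1, 2$, $\Lambda$ is linearly isomorphic to a projective space via the parametrizations in \Cref{sss: Schub}, and for $\gamma = 3$ it is the degree-$2$ rational curve exhibited in \Cref{ex: L^3} and \Cref{ss: smooth quad}. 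This gives a uniform starting point for all six cases.

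Next, I would determine the singular locus of each variety, extending \Cref{le: Sing}. For $\mathcal{L}_n^{L,3,3}$ and $\mathcal{L}_n^{L,3,2}$, the singular stratum should come from the distinguished point $L \in \Lambda$ (plus possibly camera-center incidences in the $h=2$ case); the $\gamma=2$ analogues require a finer stratification along the one-parameter family of lines meeting both anchors, and the $\gamma=3$ curves should be smooth with at most finitely many singularities coming from the projection onto a given camera plane. With an explicit Whitney stratification in hand, I would apply the formula expressing $\mathrm{EDD}$ as a weighted sum of polar degrees or Chern-Schwartz-MacPherson classes of the strata, as developed for singular projective varieties in~\cite{ed-singular-projective}.

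To actually derive the predicted polynomial in $n$, I would proceed in two steps. First, establish that $\mathrm{EDD}(\mathcal{L}_n^{L^\gamma,3,h})$ is a polynomial in $n$ of degree $4-\gamma$; this should follow from the $n$-fold product structure of the ambient Grassmannian product, which makes each polar/Segre contribution a polynomial in $n$ whose degree is bounded by the dimension of the variety. Second, pin down the coefficients by combining (i) a direct Chern-class calculation for the top-dimensional stratum, which should recover the leading coefficients $9, 9, 9, 9, 6, 6$, and (ii) numerical certificates from \texttt{HomotopyContinuation.jl} at enough values of $n$ (beyond the specified $n\ge 1$ or $n \ge 4$ thresholds) to force the polynomial ansatz.

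The main obstacle I expect is the interplay between singularities and the $n$-fold product structure. For the $\gamma = 1$ and $\gamma = 2$ cases, the singular stratum is $n$-dependent, and keeping careful track of how its contribution to the polar class formula evolves as cameras are added is the delicate step; in particular, verifying the subleading terms such as $-12 n^2$ versus $-21 n^2$ in the two $\gamma=1$ formulas requires understanding precisely how the $h=3$ versus $h=2$ projection behaves near the singular point $L$. A secondary subtlety is the $\gamma = 3$ case, where $\Lambda \subseteq \PP^5$ is a conic, so each factor of the image is a conic rather than linearly embedded, and the EDD of the resulting curve in $(\PP^{\eta})^n$ must be compared against the $6n - 2$ prediction by relating it to the $\text{EDD}$ of a generic rational normal curve under a repeated diagonal-type embedding.
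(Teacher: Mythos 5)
The statement you are addressing is not a theorem of the paper: it is stated explicitly as a conjecture, supported only by numerical homotopy continuation experiments, and the paper offers no proof. So there is no paper argument to compare against, and the relevant question is whether your proposal would actually close the gap. As written, it would not. The two load-bearing steps are asserted rather than established. First, the claim that $\mathrm{EDD}(\mathcal{L}_n^{L^\gamma,3,h})$ is a polynomial in $n$ of degree $4-\gamma$ does not follow from the ``$n$-fold product structure'' of the ambient space; proving eventual polynomiality (and an a priori degree bound) is precisely the hard content, and without it your plan to ``force the polynomial ansatz'' from finitely many numerical values proves nothing for general $n$ --- it reproduces exactly the level of evidence the paper already has. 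Moreover, numerical counts from \texttt{HomotopyContinuation.jl} are not certificates unless separately certified, and even then they only pin down the value at those specific $n$. Second, the Chern--Schwartz--MacPherson/polar-degree computation over a Whitney stratification, which is where the subleading coefficients such as $-12n^2$ versus $-21n^2$ would have to come from, is left entirely unexecuted; the paper's own remarks (the constant term matching the signed Euler characteristic, the leading term $\tfrac{3^d}{d!}d_{\mathds{1}}$) are likewise observed patterns, not inputs you can cite as established.

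A further structural issue: your ``uniform reduction'' to the Schubert variety $\Lambda$ buys less than you suggest. The EDD is not invariant under birational maps, nor even under isomorphisms that change the embedding --- the paper stresses this right after \Cref{le: N N}, noting that although $\mathcal{L}_n^{L^\gamma,3,3}$ is linearly isomorphic to $\Lambda$, its EDD cannot be read off from $\Lambda$ alone. What controls the EDD is the embedding of the image in $\prod_i \Gr(1,\PP^{h})\subseteq(\PP^{\eta})^n$ together with the chosen affine patch and quadratic form, so any honest computation (polar classes of the conormal variety, or ED-degree formulas for singular varieties as in~\cite{ed-singular-projective}) must be carried out for that embedded, $n$-dependent object, including the $n$-dependent singular locus you correctly flag for $\gamma=1,2$ (cf.\ \Cref{le: Sing}). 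Your outline identifies the right ingredients and obstacles, but every step that would turn the conjecture into a theorem is deferred, so the proposal is a research plan rather than a proof.
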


We observe two patterns among these conjectures. Firstly, the constant terms is the signed Euler characteristic of the domain of the corresponding projection map. Let $d$ be the dimension of this domain. Then, the top coefficients are all equal to 
\begin{align}
    \frac{3^d}{d!}d_{\mathds{1}},
\end{align}
where $d_{\mathds{1}}$ is the symmetric $D(1,\ldots,1,0,\ldots,0)$ multidegree of the multiview variety from \Cref{s: Multdeg}. We leave it for future work to investigate this further.

The lower bound on $n$ assumed in the theorem and conjectures corresponds to the number of generic cameras needed for the blowup to be isomorphic to the (anchored) multiview variety. For non-anchored multiview varieties, this question is studied in \cite[Section 6]{rydell2023triangulation}. 

\subsection{Resectioning}\label{ss:resect-EDD}
We conclude this section with conjectural Euclidean distance degrees for the resectioning varieties of~\Cref{s:resectioning}.
All formulas are new to the best of our knowledge,  except $\mathrm{EDD}(\mathcal{R}_n^{3,2})$ which is~\cite[Conjecture 23]{resectUW}.
 \begin{conjecture}\label{conj:resect-ED} $ $
 \begin{enumerate}
     \item For $n\ge 4$, $ \mathrm{EDD}(\mathcal{R}_n^{1,1})= 3n - 8$,
     \item For $n\ge 5$, $ \mathrm{EDD}(\mathcal{R}_n^{2,2})= 12n^2 -84n + 147$,
     \item For $n\ge 6$, $ \mathrm{EDD}(\mathcal{R}_n^{2,1})= \displaystyle\frac{15}{2} n^2 - \displaystyle\frac{115}{2} n + 108$,
     \item For $n\ge 6,$ $\mathrm{EDD}(\mathcal{R}_n^{3,3}) = \displaystyle\frac{88}{3}n^{3}-400\,n^{2}+\displaystyle\frac{5456}{3}n-2756$,
          \item For $n\ge 6,$ $\mathrm{EDD}(\mathcal{R}_n^{3,2}) = \displaystyle\frac{80}{3}n^{3}-368 n^{2}+ \displaystyle\frac{5068}{3} n- 2580$,
     \item For $n\ge 8,$ $\mathrm{EDD}(\mathcal{R}_n^{3,1}) = \displaystyle\frac{21}{2}n^{3}-\displaystyle\frac{317}{2}n^{2}+793 n-312$.
     \end{enumerate}
 \end{conjecture}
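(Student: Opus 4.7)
The plan is to prove the six EDD formulas in \Cref{conj:resect-ED} using the theory of Euclidean distance degrees for singular varieties, building on the structural results of \Cref{s:resectioning}. Since \Cref{thm:sing-res11} establishes that $\mathcal{R}_n^{1,1}$ is singular along $\mathcal{V}_{n,1}^1$, and the paper indicates that $\mathcal{R}_n^{N,h}$ is singular in all listed cases, the classical smooth EDD formula cannot be applied directly. Instead, one must invoke an additive EDD formula over Whitney strata---such as the Chern-Mather/Euler-obstruction formula in~\cite{ed-singular-projective}---which writes $\mathrm{EDD}(X)$ as a sum over strata $S$ of terms computable from Euler characteristics of generic linear sections of $S$ weighted by local Euler obstructions.

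The first technical step is to complete the determination of Whitney stratifications for each $\mathcal{R}_n^{N,h}$. The natural candidate strata are the intersections $\mathcal{R}_n^{N,h} \cap \mathcal{V}_{n,r}^h$ for $1 \le r \le h+1$. \Cref{prop:iso-resction} handles the top stratum when $N=h$, identifying it with $\mathcal{W}_{N+1}^{N,N}$, a variety birational to $\PGL_{N+1}$. For $h<N$, an analogous top-stratum statement should be established by direct parametrization using~\eqref{eq:resect-is-multiview}. The paper notes that for $(2,1)$ and for larger $n$ in the $(2,2)$ case, additional irreducible components appear in the singular locus; these must be enumerated and equipped with a compatible Whitney stratification. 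Once the stratifications are in hand, the contribution of each rank stratum reduces to computing sectional Euler characteristics of determinantal varieties $\mathcal{V}_{n,r}^h$, which follow polynomial-in-$n$ formulas by standard intersection theory applied to Segre varieties of fixed rank.

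The hard part will be explaining the striking observation that $\mathrm{EDD}(\mathcal{R}_n^{N,h})$ is a polynomial of degree only $N$ in $n$, rather than the naive degree $Nh+N+h = \dim \mathcal{R}_n^{N,h}$ one would expect from identifying resectioning varieties with point multiview varieties via~\eqref{eq:resect-is-multiview}. This collapse reflects the highly non-generic structure of the dual camera arrangement $\Ca_\mathbf{X}$, whose centers lie on a Segre variety as illustrated in \Cref{ex:segre}; consequently many would-be ED-critical points are absorbed into the singular locus or run off to infinity. I would try to exploit the $\PGL_{N+1}$ symmetry on the top stratum (\Cref{prop:iso-resction}) to organize the Chern-Mather calculation equivariantly, so that the per-stratum polynomial degree in $n$ is visibly bounded by $N$. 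A complementary induction on $n$---where adjoining one world point $X_{n+1}$ corresponds to appending one projective factor and one dual camera of the prescribed Kronecker form---should yield a recurrence that, combined with computational verification at sufficiently many small values of $n$ beyond the conjectured polynomial degree, pins down the EDD polynomial exactly.
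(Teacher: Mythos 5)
You should note at the outset that the statement you were asked to prove is stated in the paper as a \emph{conjecture}: the authors offer no proof, only numerical evidence from homotopy continuation and monodromy computations in \texttt{julia} and Macaulay2. So there is no paper proof to compare against, and the relevant question is whether your argument actually establishes the formulas. It does not. What you have written is a research program, not a proof: every substantive step is deferred. Concretely: (i) the Whitney stratifications you need are known only in the $(1,1)$ case (\Cref{thm:sing-res11}); for the other five cases the paper itself observes that the singular loci are \emph{not} just the rank strata $\mathcal{R}_n^{N,h}\cap\mathcal{V}_{n,r}^h$ (e.g. $(\mathcal{R}_n^{2,1})_{\mathrm{sing}}$ has several irreducible components, and $\mathcal{V}_{5,2}^2\subsetneq(\mathcal{R}_5^{2,2})_{\mathrm{sing}}$ while $\mathcal{V}_{n,2}^2\subsetneq\mathcal{R}_n^{2,2}$ only as a subvariety for $n>5$), so your candidate stratification is already incomplete and you never enumerate the missing components; (ii) the additive formula from the singular ED theory requires local Euler obstructions and sectional Euler characteristics of the \emph{intersections} $\mathcal{R}_n^{N,h}\cap\mathcal{V}_{n,r}^h$ (not of $\mathcal{V}_{n,r}^h$ alone), and none of these invariants are computed, even in the $(1,1)$ case where the stratification is available; (iii) the degree-$N$ collapse you plan to "explain" is itself \Cref{conj:lower-degree}, so using it to bound the degree of the EDD polynomial is circular, and without an a priori proof that $\mathrm{EDD}(\mathcal{R}_n^{N,h})$ is eventually a polynomial in $n$ of known degree, "verify at finitely many $n$ and interpolate" proves nothing.

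There is also a genericity obstruction you pass over: the EDD in this paper is taken in the specific affine patch $x_0=1$ in each factor, and the authors explicitly warn that for resectioning varieties this patch cannot be treated as generic because the dual cameras $\Ca_{\mathbf X}$ are non-generic (their centers lie on a Segre variety, \Cref{ex:segre}). The Chern--Mather/Euler-obstruction formulas you invoke are stated for generic data and generic coordinates; applying them here requires a transversality argument specific to this patch and to the isotropic quadric, which you do not supply. Finally, the proposed induction on $n$ (adjoining one world point and one dual camera) has no established recurrence behind it for ED degrees; nothing in the paper or in the cited literature gives such a recursion. The tools you name are plausibly the right ones, and \Cref{thm:sing-res11} plus \Cref{prop:iso-resction} do give a genuine starting point in the $(1,1)$ case, but as written the proposal leaves all six formulas unproven.
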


In closing, we note the implications of lower-than-expected degrees in~\Cref{conj:lower-degree} for global approaches to optimization over resectioning varieties based on polynomial system solving. 
For instance, $\mathrm{EDD} (\mathcal{R}_{1,1}^n)$ counts the number of critical points of the following \textit{M\"{o}bius alignment problem}: given $2n$ generic real numbers $X_1, x_1, \ldots , X_n, x_n,$ compute a M\"{o}bius transformation $C$ that minimizes the squared Euclidean error defined by $
( C(X_1) - x_1 )^2 + \ldots + (C(X_n) - x_n )^2.$
Analogous problems have been studied in the computer vision literature~\cite{LeChinSuter}.
Our conjecture implies that homotopy continuation methods for computing the global minimum scale linearly in $n.$
With a view towards proving this conjecture,~\Cref{thm:sing-res11} above may provide a useful first step.

{\small
\bibliographystyle{alpha}
\bibliography{VisionBib}
}



\end{document}